\newcommand{\ud}[0]{\,\mathrm{d}}
\newcommand{\ceil}[1]{\lceil #1 \rceil}
\newcommand{\dist}[0]{\operatorname{dist}}
\newcommand{\abs}[1]{|#1|}
\newcommand{\Babs}[1]{\Big|#1\Big|}
\newcommand{\Norm}[2]{\|#1\|_{#2}}
\newcommand{\BNorm}[2]{\Big\|#1\Big\|_{#2}}
\newcommand{\pair}[2]{\langle #1,#2 \rangle}
\newcommand{\Bpair}[2]{\Big\langle #1,#2 \Big\rangle}
\newcommand{\ave}[1]{\langle #1\rangle}
\newcommand{\lspan}[0]{\operatorname{span}}
\newcommand{\BMO}[0]{\operatorname{BMO}}
\newcommand{\supp}[0]{\operatorname{supp}}
\newcommand{\R}{\mathbb{R}}
\newcommand{\C}{\mathbb{C}}
\newcommand{\N}{\mathbb{N}}
\newcommand{\Z}{\mathbb{Z}}
\newcommand{\prob}[0]{\mathbb{P}}
\newcommand{\Exp}[0]{\mathbb{E}}
\newcommand{\D}[0]{\mathbb{D}}
\newcommand{\good}[0]{\operatorname{good}}
\newcommand{\bad}[0]{\operatorname{bad}}
\swapnumbers \numberwithin{equation}{section}
\theoremstyle{plain}
\newtheorem{theorem}[equation]{Theorem}
\newtheorem{proposition}[equation]{Proposition}
\newtheorem{corollary}[equation]{Corollary}
\newtheorem{lemma}[equation]{Lemma}
\theoremstyle{definition}
\theoremstyle{remark}
\newtheorem{remark}[equation]{Remark}
\begin{document}

\title[Dyadic representation and $A_2$ theorem]{Representation of singular integrals by dyadic operators, and the $A_2$ theorem}

\author[T.~P.\ Hyt\"onen]{Tuomas P.\ Hyt\"onen}
\address{Department of Mathematics and Statistics, P.O.B.~68 (Gustaf H\"all\-str\"omin katu~2b), FI-00014 University of Helsinki, Finland}
\email{tuomas.hytonen@helsinki.fi}

%\date{\today}

%\keywords{}
\subjclass[2010]{42B25, 42B35}
% 42B20 Singular and oscillatory integrals (Calder\'on-Zygmund, etc.) 
% 42B25 Maximal functions, Littlewood-Paley theory
% 42B35 Function spaces arising in harmonic analysis 

% 46B09 Probabilistic methods in Banach space theory
% 46E40 Spaces of vector- and operator-valued functions
% 47A60 Functional calculus
% 47F05 Partial differential operators
% 60G46 Martingales and classical analysis

\maketitle

\begin{center}
{\small
Department of Mathematics and Statistics\\
P.O.B.~68 (Gustaf H\"all\-str\"omin katu~2b)\\
FI-00014 University of Helsinki, Finland\\
tuomas.hytonen@helsinki.fi
}
\end{center}

\begin{abstract}
This exposition presents a self-contained proof of the $A_2$ theorem, the quantitatively sharp norm inequality for singular integral operators in the weighted space $L^2(w)$. The strategy of the proof is a streamlined version of the author's original one, based on a probabilistic Dyadic Representation Theorem for singular integral operators. While more recent non-probabilistic approaches are also available now, the probabilistic method provides additional structural information, which has independent interest and other applications. The presentation emphasizes connections to the David--Journ\'e $T(1)$ theorem, whose proof is obtained as a byproduct. Only very basic Probability is used; in particular, the conditional probabilities of the original proof are completely avoided.\\

\noindent\textsc{Keywords:} Singular integral, Calder\'on--Zygmund operator, weighted norm inequality, sharp estimate, $A_2$ theorem, $T(1)$ theorem
\end{abstract}

\newpage

\section{Introduction}

The goal of this exposition is to prove the following \emph{$A_2$ theorem}:

\begin{theorem}\label{thm:A2}
Let $T$ be any Calder\'on--Zygmund operator on $\R^d$ (like the Hilbert transform on $\R$, the Beurling transform on $\C\simeq\R^2$, or any of the Riesz transforms in $\R^d$ for $d\geq 2$; see Section~\ref{sec:representation} for the general definition).
Let $w:\R^d\to[0,\infty]$ be a weight in the Muckenhoupt class $A_2$, i.e.,
\begin{equation*}
   [w]_{A_2}:=\sup_Q\fint_Q w\cdot\fint_Q\frac{1}{w}<\infty\qquad\Big(\fint_Q w:=\frac{1}{\abs{Q}}\int_Q w\Big),
\end{equation*}
where the supremum is over all axes-parallel cubes $Q$ in $\R^d$.
Let $L^2(w)$ be the space of all measurable functions $f:\R^d\to\C$ such that
\begin{equation*}
  \Norm{f}{L^2(w)}:=\Big(\int_{\R^d}\abs{f}^2 w\Big)^{1/2}<\infty.
\end{equation*}
Then the following norm inequality is valid for any $f\in L^2(w)$, where $C_T$ only depends on $T$ and not on $f$ or $w$:
\begin{equation*}
  \Norm{Tf}{L^2(w)}\leq C_T\cdot[w]_{A_2}\cdot\Norm{f}{L^2(w)}.
\end{equation*}
\end{theorem}

This general theorem for all Calder\'on--Zygmund operators is due to the author~\cite{Hytonen:A2}, but it was first obtained in the listed special cases by S.~Petermichl and A.~Volberg \cite{PV} and Petermichl \cite{Petermichl:Hilbert,Petermichl:Riesz}, and in various further particular instances by a number of others \cite{CMP,Dragicevic:cubic,LPR,Vagharshakyan}. See also Section~\ref{sec:Beurling} for more details on the history of the problem.

Although several different proofs of Theorem~\ref{thm:A2} are known by now, I will present one that is a direct descendant of the original approach, but greatly streamlined in various places, based on ingredients from various subsequent proofs. On the large scale, I follow the strategy of my paper with C.~P\'erez, S.~Treil and A.~Volberg \cite{HPTV}, the first simplification of my original proof \cite{Hytonen:A2}. This consists of the following steps, which have independent interest:
\begin{enumerate}
  \item\label{it:red2Dyadic} Reduction to \emph{dyadic shift operators} (the Dyadic Representation Theorem): every Calder\'on--Zygmund operator $T$ has a (probabilistic) representation in terms of these simpler operators, and hence it suffices to prove a similar claim for every dyadic shift $S$ in place of $T$. This was a key novelty of \cite{Hytonen:A2} when it first appeared. In this exposition, the probabilistic ingredients of this representation have been simplified from \cite{Hytonen:A2,HPTV}, in that no conditional probabilities are needed.
  \item\label{it:red2Testing} Reduction to \emph{testing conditions} (a local $T(1)$ theorem): in order to have the full norm inequality
\begin{equation*}
    \Norm{Sf}{L^2(w)}\leq C_S[w]_{A_2}\Norm{f}{L^2(w)},
\end{equation*}
it suffices to have such an inequality for special test functions only:
\begin{equation*}
\begin{split}
    \Norm{S(1_Q w^{-1})}{L^2(w)} &\leq C_S[w]_{A_2}\Norm{1_Q w^{-1}}{L^2(w)}, \\
    \Norm{S^*(1_Q w)}{L^2(w^{-1})} &\leq C_S[w]_{A_2}\Norm{1_Q w}{L^2(w^{-1})}.
\end{split}
\end{equation*}
This goes essentially back to F.~Nazarov, Treil and Volberg \cite{NTV:2weightHaar}. (In the original proof \cite{Hytonen:A2}, in contrast to the simplification \cite{HPTV}, this reduction was done on the level of the Calder\'on--Zygmund operator, using a more difficult variant due to P\'erez, Treil and Volberg~\cite{PTV}).
   \item\label{it:verifyTesting} Verification of the testing conditions for $S$. This was first achieved by M.~T. Lacey, Petermichl and M.~C. Reguera~\cite{LPR}, although some adjustments were necessary to achieve the full generality in \cite{Hytonen:A2}.
\end{enumerate}

As said, several different proofs and extensions of the $A_2$ theorem have appeared over the past few years; see the final section for further discussion and references. In particular, it is now known that the probabilistic Dyadic Representation Theorem may be replaced by a deterministic Dyadic Domination Theorem. Its first version, a domination in norm, is due to A.~Lerner \cite{Lerner:domination}, and based on his clever local oscillation formula \cite{Lerner:formula}; this was subsequently improved to pointwise domination by J.~M. Conde-Alonso and G.~Rey \cite{CondeRey} and, independently, by Lerner and Nazarov \cite{LerNaz:book}. Yet another approach to the pointwise domination was found by Lacey \cite{Lacey:elem} and again simplified by Lerner \cite{Lerner:simplest}; this has the virtue of covering the biggest class of operators admissible for the $A_2$ theorem at the present state of knowledge. However, the probabilistic method continues to have its independent interest: it achieves the reduction to dyadic model operators as a linear \emph{identity}, in contrast to the (non-linear) \emph{upper bound} provided the deterministic domination. As such, it provides a structure theorem for singular integral operators, which has found other uses beyond the weighted norm inequalities, including the following:
\begin{itemize}
  \item The \emph{theorem itself} is applied to the estimation of \emph{commutators} of Calder\'on--Zygmund operators and BMO functions in a multi-parameter setting by L.~Dalenc and Y.~Ou~\cite{DalOu:iterated} and in a two-weight setting by I.~Holmes, M.~Lacey and B.~Wick \cite{HLW,HW}; it is also applied to sharp norm bounds for \emph{vector-valued extensions} of Calder\'on--Zygmund operators by S.~Pott and A.~Stoica~\cite{PS}.
  \item The \emph{methods behind this theorem} have been generalized by H.~Martikainen \cite{Martikainen:Advances} and Y.~Ou \cite{Ou:Tb} to the analysis of \emph{bi-parameter singular integrals}, yielding new $T(1)$ and $T(b)$ type theorems for these operators.
\end{itemize}

Whereas the domination method \emph{assumes} the unweighted $L^2$ boundedness of the operator $T$, the representation method can (and will, in this exposition) be set up in such a way that it \emph{derives} the unweighted boundedness from a priori weaker assumptions as a byproduct. Indeed, a proof of the $T(1)$ theorem of G.~David and J.-L. Journ\'e \cite{DJ} is obtained as a byproduct of the present exposition, and this approach was lifted to the nontrivial case of bi-parameter singular integrals in the mentioned works of Martikainen \cite{Martikainen:Advances} and Ou \cite{Ou:Tb}. Of course, the deterministic domination method has its own advantages, but the point that I want to make here is that so does the probabilistic approach, which I present in the following exposition.

\section{Preliminaries}

The standard (or reference) system of dyadic cubes is
\begin{equation*}
  \mathscr{D}^0:=\{2^{-k}([0,1)^d+m):k\in\Z,m\in\Z^d\}.
\end{equation*}
We will need several dyadic systems, obtained by translating the reference system as follows. Let $\omega=(\omega_j)_{j\in\Z}\in(\{0,1\}^d)^{\Z}$ and
\begin{equation*}
  I\dot+\omega:=I+\sum_{j:2^{-j}<\ell(I)}2^{-j}\omega_j.
\end{equation*}
Then
\begin{equation*}
  \mathscr{D}^{\omega}:=\{I\dot+\omega:I\in\mathscr{D}^0\},
\end{equation*}
and it is straightforward to check that $\mathscr{D}^{\omega}$ inherits the important nestedness property of $\mathscr{D}^0$: if $I,J\in\mathscr{D}^{\omega}$, then $I\cap J\in\{I,J,\varnothing\}$. When the particular $\omega$ is unimportant, the notation $\mathscr{D}$ is sometimes used for a generic dyadic system.

\subsection{Haar functions}
Any given dyadic system $\mathscr{D}$ has a natural function system associated to it: the Haar functions. In one dimension, there are two Haar functions associated with an interval $I$: the non-cancellative $h^0_I:=\abs{I}^{-1/2}1_I$ and the cancellative $h^1_I:=\abs{I}^{-1/2}(1_{I_{\ell}}-1_{I_r})$, where $I_{\ell}$ and $I_r$ are the left and right halves of $I$. In $d$ dimensions, the Haar functions on a cube $I=I_1\times\cdots\times I_d$ are formed of all the products of the one-dimensional Haar functions:
\begin{equation*}
  h_I^{\eta}(x)=h_{I_1\times\cdots\times I_d}^{(\eta_1,\ldots,\eta_d)}(x_1,\ldots,x_d):=\prod_{i=1}^d h_{I_i}^{\eta_i}(x_i).
\end{equation*}
The non-cancellative $h_I^0=\abs{I}^{-1/2}1_I$ has the same formula as in $d=1$. All other $2^d-1$ Haar functions $h_I^{\eta}$ with $\eta\in\{0,1\}^d\setminus\{0\}$ are cancellative, i.e., satisfy $\int h_I^{\eta}=0$, since they are cancellative in at least one coordinate direction.

For a fixed $\mathscr{D}$, all the cancellative Haar functions $h_I^{\eta}$, $I\in\mathscr{D}$ and $\eta\in\{0,1\}^d\setminus\{0\}$, form an orthonormal basis of $L^2(\R^d)$. Hence any function $f\in L^2(\R^d)$ has the orthogonal expansion
\begin{equation*}
  f=\sum_{I\in\mathscr{D}}\sum_{\eta\in\{0,1\}^d\setminus\{0\}}\pair{f}{h_I^{\eta}}h_I^{\eta}.
\end{equation*}
Since the different $\eta$'s seldom play any major role, this will be often abbreviated (with slight abuse of language) simply as
\begin{equation*}
  f=\sum_{I\in\mathscr{D}}\pair{f}{h_I}h_I,
\end{equation*}
and the summation over $\eta$ is understood implicitly.

\subsection{Dyadic shifts}

A dyadic shift with parameters $i,j\in\N:=\{0,1,2,\ldots\}$ is an operator of the form
\begin{equation*}
  Sf=\sum_{K\in\mathscr{D}}A_K f,\qquad
  A_K f=\sum_{\substack{I,J\in\mathscr{D};I,J\subseteq K \\ \ell(I)=2^{-i}\ell(K)\\ \ell(J)=2^{-j}\ell(K)}}a_{IJK}\pair{f}{h_I}h_J,
\end{equation*}
where $h_I$ is a Haar function on $I$ (similarly $h_J$), and the $a_{IJK}$ are coefficients with
\begin{equation*}
  \abs{a_{IJK}}\leq\frac{\sqrt{\abs{I}\abs{J}}}{\abs{K}}.
\end{equation*}
It is also required that all subshifts
\begin{equation*}
   S_{\mathscr{Q}}=\sum_{K\in\mathscr{Q}}A_K,\qquad\mathscr{Q}\subseteq\mathscr{D},
\end{equation*}
map $S_{\mathscr{Q}}:L^2(\R^d)\to L^2(\R^d)$ with norm at most one.

The shift is called cancellative, if all the $h_I$ and $h_J$ are cancellative; otherwise, it is called non-cancellative.

The notation $A_K$ indicates an ``averaging operator'' on $K$. Indeed, from the normalization of the Haar functions, it follows that
\begin{equation*}
  \abs{A_K f}\leq 1_K\fint_K\abs{f}
\end{equation*}
pointwise.

For cancellative shifts, the $L^2$ boundedness is automatic from the other conditions. This is a consequence of the following facts:
\begin{itemize}
  \item The pointwise bound for each $A_K$ implies that $\Norm{A_K f}{L^p}\leq\Norm{f}{L^p}$ for all $p\in[1,\infty]$; in particular, these components of $S$ are uniformly bounded on $L^2$ with norm one. (This first point is true even in the non-cancellative case.)
  \item Let $\D_K^{i}$ denote the orthogonal projection of $L^2$ onto $\lspan\{h_I:I\subseteq K,\ell(I)=2^{-i}\ell(K)\}$. When $i$ is fixed, it follows readily that any two $\D_K^{i}$ are orthogonal to each other. (This depends on the use of cancellative $h_I$.) Moreover, we have $A_K=\D_K^{j}A_K \D_K^{i}$. Then the boundedness of $S$ follows from two applications of Pythagoras' theorem with the uniform boundedness of the $A_K$ in between. 
\end{itemize}

A prime example of a non-cancellative shift (and the only one we need in these lectures) is the \emph{dyadic paraproduct}
\begin{equation*}
  \Pi_b f=\sum_{K\in\mathscr{D}}\pair{b}{h_K}\ave{f}_K h_K
  =\sum_{K\in\mathscr{D}}\abs{K}^{-1/2}\pair{b}{h_K}\cdot\pair{f}{h_K^0} h_K,
\end{equation*}
where $b\in\BMO_d$ (the dyadic BMO space) and $h_K$ is a cancellative Haar function. This is a dyadic shift with parameters $(i,j)=(0,0)$, where $a_{IJK}=\abs{K}^{-1/2}\pair{b}{h_K}$ for $I=J=K$. The $L^2$ boundedness of the paraproduct, if and only if $b\in\BMO_d$, is part of the classical theory. Actually, to ensure the normalization condition of the shift, it should be further required that $\Norm{b}{\BMO_d}\leq 1$.

\subsection{Random dyadic systems; good and bad cubes}

We obtain a notion of \emph{random dyadic systems} by equipping the parameter set $\Omega:=(\{0,1\}^d)^{\Z}$ with the natural probability measure: each component $\omega_j$ has an equal probability $2^{-d}$ of taking any of the $2^d$ values in $\{0,1\}^d$, and all components are independent of each other.

Let $\phi:[0,1]\to[0,1]$ be a fixed \emph{modulus of continuity}: a strictly increasing function with $\phi(0)=0$, $\phi(1)=1$, and $t\mapsto\phi(t)/t$ decreasing (hence $\phi(t)\geq t$) with $\lim_{t\to 0}\phi(t)/t=\infty$.
We further require the \emph{Dini condition}
\begin{equation*}
  \int_0^1\phi(t)\frac{\ud t}{t}<\infty.
\end{equation*}
Main examples include $\phi(t)=t^{\gamma}$ with $\gamma\in(0,1)$ and
\begin{equation*}
  \phi(t)=\Big(1+\frac{1}{\gamma}\log\frac{1}{t}\Big)^{-\gamma},\qquad \gamma>1.
\end{equation*}
We also fix a (large) parameter $r\in\Z_+$. (How large, will be specified shortly.)

A cube $I\in\mathscr{D}^{\omega}$ is called bad if there exists $J\in\mathscr{D}^{\omega}$ such that $\ell(J)\geq 2^r\ell(I)$ and
\begin{equation*}
  \dist(I,\partial J)\leq\phi\Big(\frac{\ell(I)}{\ell(J)}\Big)\ell(J):
\end{equation*}
roughly, $I$ is relatively close to the boundary of a much bigger cube.

\begin{remark}
This definition of good cubes goes back to Nazarov--Treil--Volberg \cite{NTV:Tb} in the context of singular integrals with respect to non-doubling measures. They used the modulus of continuity $\phi(t)=t^{\gamma}$, where $\gamma$ was chosen to depend on the dimension and the H\"older exponent of the Calder\'on--Zygmund kernel via
\begin{equation*}
  \gamma=\frac{\alpha}{2(d+\alpha)}.
\end{equation*}
This choice has become ``canonical'' in the subsequent literature, including the original proof of the $A_2$ theorem. However, other choices can also be made, as we do here.
\end{remark}

We make some basic probabilistic observations related to badness. Let $I\in\mathscr{D}^0$ be a reference interval. The \emph{position} of the translated interval
\begin{equation*}
  I\dot+\omega=I+\sum_{j:2^{-j}<\ell(I)}2^{-j}\omega_j,
\end{equation*}
by definition, depends only on $\omega_j$ for $2^{-j}<\ell(I)$. On the other hand, the \emph{badness} of $I\dot+\omega$ depends on its \emph{relative position} with respect to the bigger intervals
\begin{equation*}
  J\dot+\omega=J+\sum_{j:2^{-j}<\ell(I)}2^{-j}\omega_j+\sum_{j:\ell(I)\leq 2^{-j}<\ell(I)}2^{-j}\omega_j.
\end{equation*}
The same translation component $\sum_{j:2^{-j}<\ell(I)}2^{-j}\omega_j$ appears in both $I\dot+\omega$ and $J\dot+\omega$, and so does not affect the relative position of these intervals. Thus this relative position, and hence the badness of $I$, depends only on $\omega_j$ for $2^{-j}\geq\ell(I)$. In particular:

\begin{lemma}\label{lem:indep}
For $I\in\mathscr{D}^0$, the position and badness of $I\dot+\omega$ are independent random variables.
\end{lemma}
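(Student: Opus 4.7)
The plan is to factor the probability space $\Omega = (\{0,1\}^d)^{\Z}$ according to which coordinates $\omega_j$ influence the position versus the badness of $I \dot+ \omega$, and to verify that these two groups of coordinates are disjoint. Since the $\omega_j$ are independent across $j$ by construction, any two random variables that are measurable with respect to disjoint collections of coordinates will automatically be independent.

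First I would let $\omega^< := (\omega_j)_{2^{-j} < \ell(I)}$ and $\omega^{\geq} := (\omega_j)_{2^{-j} \geq \ell(I)}$, so that the product structure of the measure on $\Omega$ makes $\omega^<$ and $\omega^{\geq}$ independent. From the defining formula $I \dot+ \omega = I + \sum_{j: 2^{-j} < \ell(I)} 2^{-j}\omega_j$ the position is manifestly a function of $\omega^<$ alone, which handles one half of the lemma.

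The main step is to show that badness depends only on $\omega^{\geq}$. The key observation is a translation invariance: for any cubes $I', J' \subset \R^d$ and any vector $c$, one has $\dist(I'+c, \partial(J'+c)) = \dist(I', \partial J')$ and the side lengths are unchanged. Badness of $I\dot+\omega$ asks for the existence of $J \in \mathscr{D}^0$ with $\ell(J) \geq 2^r\ell(I)$ satisfying $\dist(I\dot+\omega, \partial(J\dot+\omega)) \leq \phi(\ell(I)/\ell(J))\,\ell(J)$. Splitting the translation,
\begin{equation*}
  J \dot+ \omega = J + \sum_{j: 2^{-j} < \ell(I)} 2^{-j}\omega_j + \sum_{j: \ell(I) \leq 2^{-j} < \ell(J)} 2^{-j}\omega_j,
\end{equation*}
I would subtract the common shift $\sum_{j: 2^{-j} < \ell(I)} 2^{-j}\omega_j$ from both $I\dot+\omega$ and $J\dot+\omega$; translation invariance then reduces the defining inequality to one involving only $I$, $J$, and $\omega_j$ with $\ell(I) \leq 2^{-j} < \ell(J)$, i.e.\ only $\omega^{\geq}$-coordinates. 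Taking the existential quantifier over $J \in \mathscr{D}^0$ with $\ell(J)\geq 2^r\ell(I)$ preserves this measurability.

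Putting the pieces together, position is $\sigma(\omega^<)$-measurable and badness is $\sigma(\omega^{\geq})$-measurable, and these two $\sigma$-algebras are independent by the product structure of the measure on $\Omega$; hence so are the two random variables. The only real content is the bookkeeping of which indices $j$ enter which translation, which is essentially the computation displayed above; everything else is formal.
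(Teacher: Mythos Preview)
Your argument is correct and is essentially the same as the paper's: you show that the position of $I\dot+\omega$ depends only on the coordinates $\omega_j$ with $2^{-j}<\ell(I)$, that the badness depends only on the coordinates $\omega_j$ with $2^{-j}\geq\ell(I)$ (via the common-shift cancellation and translation invariance of the distance), and conclude independence from the product structure of~$\Omega$. If anything, you are slightly more careful with the $\sigma$-algebra language and with the upper index in the sum for $J\dot+\omega$ than the paper itself.
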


Another observation is the following: by symmetry and the fact that the condition of badness only involves relative position and size of different cubes, it readily follows that the probability of a particular cube $I\dot+\omega$ being bad is equal for all cubes $I\in\mathscr{D}^0$:
\begin{equation*}
  \prob_{\omega}(I\dot+\omega\bad)=\pi_{\bad}=\pi_{\bad}(r,d,\phi).
\end{equation*}

The final observation concerns the value of this probability:

\begin{lemma}
We have
\begin{equation*}
  \pi_{\bad}\leq 8d\int_0^{2^{-r}}\phi(t)\frac{\ud t}{t};
\end{equation*}
in particular, $\pi_{\bad}<1$ if $r=r(d,\phi)$ is chosen large enough.
\end{lemma}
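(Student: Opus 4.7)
The plan is to estimate $\pi_{\bad}$ by a union bound over scales and then reduce each scale's probability to a simple counting problem inside a parent cube.

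First, I would split the bad event by the scale of the ``witness'' $J$: write $\ell(J)=2^k\ell(I)$ with $k\geq r$, so
\begin{equation*}
   \pi_{\bad}\leq\sum_{k\geq r}\prob_{\omega}\bigl(\exists J\in\mathscr{D}^{\omega},\ \ell(J)=2^k\ell(I),\ \dist(I\dot+\omega,\partial J)\leq\phi(2^{-k})2^k\ell(I)\bigr).
\end{equation*}
For fixed $k$, nestedness of $\mathscr{D}^{\omega}$ implies that $I\dot+\omega$ is contained in a unique ancestor $J^*_k\in\mathscr{D}^{\omega}$ with $\ell(J^*_k)=2^k\ell(I)$, and any other $J$ at this scale either shares a boundary face with $J^*_k$ (in which case $\dist(I\dot+\omega,\partial J)\geq\dist(I\dot+\omega,\partial J^*_k)$) or is further than $\ell(J^*_k)\geq\phi(2^{-k})\ell(J^*_k)$ away. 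Consequently the event of interest is contained in $\{\dist(I\dot+\omega,\partial J^*_k)\leq\phi(2^{-k})\,2^k\ell(I)\}$.

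The next step is to describe the distribution of the relative position of $I\dot+\omega$ inside $J^*_k$. Inspecting the formulas for $I\dot+\omega$ and $J^*_k=J_0\dot+\omega$ (where $J_0\in\mathscr{D}^0$ is determined by the $\omega_j$ with $\ell(I)\leq 2^{-j}<2^k\ell(I)$), a direct computation shows that the translation $t:=\sum_{j:\ell(I)\leq 2^{-j}<2^k\ell(I)}2^{-j}\omega_j$, whose scaled components $t/\ell(I)$ are uniformly distributed over $\{0,1,\ldots,2^k-1\}^d$, determines the position of $I\dot+\omega$ as the sub-cube of $J^*_k$ with integer offset $-t/\ell(I)\bmod 2^k$. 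Thus, conditional on $J^*_k$ (or equivalently on the $\omega_j$ with $2^{-j}\geq 2^k\ell(I)$), the relative position of $I\dot+\omega$ is uniform over the $(2^k)^d$ sub-cubes of $J^*_k$.

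With this in hand, counting is elementary. A sub-cube with integer offset $(i_1,\ldots,i_d)$ has distance $\ell(I)\min_{\mu}\min(i_\mu,2^k-1-i_\mu)$ to $\partial J^*_k$, so it is ``bad'' iff $\min(i_\mu,2^k-1-i_\mu)\leq\phi(2^{-k})2^k$ for some $\mu$. For each coordinate this leaves at most $2(\phi(2^{-k})2^k+1)$ values of $i_\mu$, and a union bound over the $d$ coordinates gives at most $2d(\phi(2^{-k})2^k+1)(2^k)^{d-1}$ bad positions among $(2^k)^d$. Dividing and using $\phi(t)\geq t$ yields
\begin{equation*}
    \prob_{\omega}(\text{bad at scale }k)\leq 2d\bigl(\phi(2^{-k})+2^{-k}\bigr)\leq 4d\,\phi(2^{-k}).
\end{equation*}

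Finally, to pass from the sum to the integral, I would use that $t\mapsto\phi(t)/t$ is decreasing, which gives $\phi(t)\geq 2^k t\,\phi(2^{-k})$ on $[2^{-k-1},2^{-k}]$ and hence $\int_{2^{-k-1}}^{2^{-k}}\phi(t)\,\ud t/t\geq\tfrac12\phi(2^{-k})$. Summing over $k\geq r$ produces $\sum_{k\geq r}\phi(2^{-k})\leq 2\int_0^{2^{-r}}\phi(t)\,\ud t/t$, and combining with the previous step yields the claimed bound $\pi_{\bad}\leq 8d\int_0^{2^{-r}}\phi(t)\,\ud t/t$; the Dini assumption then makes this $<1$ for large $r$. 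The main obstacle I expect is the bookkeeping in the relative-position argument (verifying that the correct $k$ coordinates of $\omega$ parametrize the uniform distribution while being independent of the data fixing $J^*_k$), which is the heart of the independence phenomenon codified in Lemma \ref{lem:indep}.
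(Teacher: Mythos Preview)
Your proposal is correct and follows essentially the same approach as the paper: union bound over scales $k\geq r$, reduction to the unique ancestor $J^*_k$, uniform distribution of $I\dot+\omega$ over its $(2^k)^d$ sub-cubes, a count yielding the per-scale bound $4d\,\phi(2^{-k})$, and finally the comparison of $\sum_{k\geq r}\phi(2^{-k})$ with $2\int_0^{2^{-r}}\phi(t)\,\ud t/t$ via the monotonicity of $\phi(t)/t$. The only cosmetic difference is in the counting step: the paper estimates the volume of the bad boundary region as $\abs{J}-(\ell(J)-2\lceil\phi(2^{-k})2^k\rceil\ell(I))^d$ and applies $(1-\alpha)^d\geq 1-\alpha d$, whereas you use a direct union bound over the $d$ coordinates; both routes give the identical bound $4d\,\phi(2^{-k})$.
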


With $r=r(d,\phi)$ chosen like this, we then have $\pi_{\good}:=1-\pi_{\bad}>0$, namely, good situations have positive probability!

\begin{proof}
Observe that in the definition of badness, we only need to consider those $J$ with $I\subseteq J$. Namely, if $I$ is close to the boundary of some bigger $J$, we can always find another dyadic $J'$ of the same size as $J$ which contains $I$, and then $I$ will also be close to the boundary of $J'$. Hence we need to consider the relative position of $I$ with respect to each $J\supset I$ with $\ell(J)=2^k\ell(I)$ and $k=r,r+1,\ldots$ For a fixed $k$, this relative position is determined by
\begin{equation*}
  \sum_{j:\ell(I)\leq 2^{-j}<2^k\ell(I)}2^{-j}\omega_j,
\end{equation*}
which has $2^{kd}$ different values with equal probability. These correspond to the subcubes of $J$ of size $\ell(I)$.

Now bad position of $I$ are those which are within distance $\phi(\ell(I)/\ell(J))\cdot\ell(J)$ from the boundary. Since the possible position of the subcubes are discrete, being integer multiples of $\ell(I)$, the effective bad boundary region has depth
\begin{equation*}
\begin{split}
  \Big\lceil \phi\Big(\frac{\ell(I)}{\ell(J)}\Big)\frac{\ell(J)}{\ell(I)}\Big\rceil\ell(I)
  &\leq\Big(\phi\Big(\frac{\ell(I)}{\ell(J)}\Big)\frac{\ell(J)}{\ell(I)}+1\Big)\ell(I) \\
  &=\ell(J)\Big(\phi\Big(\frac{\ell(I)}{\ell(J)}\Big)+\frac{\ell(I)}{\ell(J)}\Big)\leq 2\ell(J)\phi\Big(\frac{\ell(I)}{\ell(J)}\Big),
\end{split}
\end{equation*}
by using that $t\leq\phi(t)$.

The good region is the cube inside $J$, whose side-length is $\ell(J)$ minus twice the depth of the bad boundary region:
\begin{equation*}
  \ell(J)-2\Big\lceil \phi\Big(\frac{\ell(I)}{\ell(J)}\Big)\frac{\ell(J)}{\ell(I)}\Big\rceil\ell(I)
  \geq\ell(J)-4\ell(J)\phi\Big(\frac{\ell(I)}{\ell(J)}\Big).
\end{equation*}
Hence the volume of the bad region is
\begin{equation*}
\begin{split}
  \abs{J}-\Big(\ell(J)-2\Big\lceil \phi\Big(\frac{\ell(I)}{\ell(J)}\Big)\frac{\ell(J)}{\ell(I)}\Big\rceil\ell(I)\Big)^d
  &\leq\abs{J}\Big(1-\Big(1-4\phi\Big(\frac{\ell(I)}{\ell(J)}\Big)\Big)^d\Big) \\
  &\leq\abs{J}\cdot 4d\phi\Big(\frac{\ell(I)}{\ell(J)}\Big)
\end{split}
\end{equation*}
by the elementary inequality $(1-\alpha)^d\geq 1-\alpha d$ for $\alpha\in[0,1]$. (We assume that $r$ is at least so large that $4\phi(2^{-r})\leq 1$.)

So the fraction of the bad region of the total volume is at most $4d\phi(\ell(I)/\ell(J))=4d\phi(2^{-k})$ for a fixed $k=r,r+1,\ldots$. This gives the final estimate
\begin{equation*}
\begin{split}
  \prob_{\omega}(I\dot+\omega\bad)
  &\leq\sum_{k=r}^{\infty}4d\phi(2^{-k})
  =\sum_{k=r}^{\infty}8d\frac{\phi(2^{-k})}{2^{-k}}2^{-k-1} \\
  &\leq\sum_{k=r}^{\infty}8d\int_{2^{-k-1}}^{2^{-k}}\frac{\phi(t)}{t}\ud t
   =8d\int_0^{2^{-r}}\phi(t)\frac{\ud t}{t},
\end{split}
\end{equation*}
where we used that $\phi(t)/t$ is decreasing in the last inequality.
\end{proof}

\section{The dyadic representation theorem}\label{sec:representation}

Let $T$ be a Calder\'on--Zygmund operator on $\R^d$. That is, it acts on a suitable dense subspace of functions in $L^2(\R^d)$ (for the present purposes, this class should at least contain the indicators of cubes in $\R^d$) and has the kernel representation
\begin{equation*}
  Tf(x)=\int_{\R^d}K(x,y)f(y)\ud y,\qquad x\notin\supp f.
\end{equation*}
Moreover, the kernel should satisfy the \emph{standard estimates}, which we here assume in a slightly more general form than usual, involving another modulus of continuity~$\psi$, like the one considered above:
\begin{equation*}
\begin{split}
  \abs{K(x,y)} &\leq\frac{C_0}{\abs{x-y}^d}, \\
  \abs{K(x,y)-K(x',y)}+\abs{K(y,x)-K(y,x')}
  &\leq\frac{C_\psi}{\abs{x-y}^d}\psi\Big(\frac{\abs{x-x'}}{\abs{x-y}}\Big)
\end{split}
\end{equation*}
for all $x,x',y\in\R^d$ with $\abs{x-y}>2\abs{x-x'}$. Let us denote the smallest admissible constants $C_0$ and $C_\psi$ by $\Norm{K}{CZ_0}$ and $\Norm{K}{CZ_\psi}$. The classical standard estimates correspond to the choice $\psi(t)=t^{\alpha}$, $\alpha\in(0,1]$, in which case we write $\Norm{K}{CZ_\alpha}$ for $\Norm{K}{CZ_\psi}$.

We say that $T$ is a bounded Calder\'on--Zygmund operator, if in addition $T:L^2(\R^d)\to L^2(\R^d)$, and we denote its operator norm by $\Norm{T}{L^2\to L^2}$.

Let us agree that $\abs{\ }$ stands for the $\ell^{\infty}$ norm on $\R^d$, i.e., $\abs{x}:=\max_{1\leq i\leq d}\abs{x_i}$. While the choice of the norm is not particularly important, this choice is slightly more convenient than the usual Euclidean norm when dealing with cubes as we will: e.g., the diameter of a cube in the $\ell^{\infty}$ norm is equal to its sidelength $\ell(Q)$.

Let us first formulate the dyadic representation theorem for general moduli of continuity, and then specialize it to the usual standard estimates. Define the following coefficients for $i,j\in\N$:
\begin{equation*}
   \tau(i,j):=\phi(2^{-\max\{i,j\}})^{-d}\psi\big(2^{-\max\{i,j\}}\phi(2^{-\max\{i,j\}})^{-1}\big),
\end{equation*}
if $\min\{i,j\}>0$; and
\begin{equation*}
  \tau(i,j):= \Psi\big(2^{-\max\{i,j\}}\phi(2^{-\max\{i,j\}})^{-1}\big),\qquad
   \Psi(t):=\int_0^t\psi(s)\frac{\ud s}{s},
\end{equation*}
if $\min\{i,j\}=0$.

We assume that $\phi$ and $\psi$ are such, that
\begin{equation}\label{eq:decay}
  \sum_{i,j=0}^{\infty}\tau(i,j)\eqsim\int_0^1\frac{1}{\phi(t)^{d}}\psi\Big(\frac{t}{\phi(t)}\Big)\frac{\ud t}{t}+\int_0^1\Psi\Big(\frac{t}{\phi(t)}\Big)\frac{\ud t}{t}<\infty.
\end{equation}
This is the case, in particular, when $\psi(t)=t^{\alpha}$ (usual standard estimates) and $\phi(t)=(1+a^{-1}\log t^{-1})^{-\gamma}$; then one checks that
\begin{equation*}
  \tau(i,j)\lesssim P(\max\{i,j\})2^{-\alpha\max\{i,j\}},\qquad P(j)=(1+j)^{\gamma(d+\alpha)},
\end{equation*}
which clearly satisfies the required convergence. However, it is also possible to treat weaker forms of the standard estimates with a logarithmic modulus $\psi(t)=(1+a^{-1}\log t^{-1})^{-\alpha}$. This might be of some interest for applications, but we do not pursue this line any further here.

\begin{theorem}\label{thm:formula}
Let $T$ be a bounded Calder\'on--Zygmund operator with modulus of continuity satisfying the above assumption. Then it has an expansion, say for $f,g\in C^1_c(\R^d)$,
\begin{equation*}
  \pair{g}{Tf}
  =c\cdot\big(\Norm{T}{L^2\to L^2}+\Norm{K}{CZ_\psi}\big)\cdot\Exp_{\omega} \sum_{i,j=0}^{\infty} \tau(i,j)\pair{g}{S^{ij}_{\omega}f},
\end{equation*}
where $c$ is a dimensional constant and $S^{ij}_{\omega}$ is a dyadic shift of parameters $(i,j)$ on the dyadic system $\mathscr{D}^{\omega}$; all of them except possibly $S^{00}_{\omega}$ are cancellative.
\end{theorem}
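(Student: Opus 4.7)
The plan is to expand $\pair{g}{Tf}$ in the Haar basis of the random dyadic system $\mathscr{D}^\omega$, restrict the double sum to pairs of cubes in which the smaller one is \emph{good}, reorganize the surviving pairs around a common ancestor $K\in\mathscr{D}^\omega$, and recognize each resulting block as a properly normalized dyadic shift. Since $\pair{g}{Tf}$ does not depend on $\omega$, I first insert $\Exp_{\omega}$ and use the Haar expansions of $f$ and $g$ in $\mathscr{D}^\omega$ to write
\begin{equation*}
  \pair{g}{Tf}=\Exp_{\omega}\sum_{I,J\in\mathscr{D}^{\omega}}\pair{f}{h_I}\pair{g}{h_J}\pair{h_J}{Th_I}.
\end{equation*}
By Lemma~\ref{lem:indep}, the position of $I\dot+\omega$ (which controls the Haar coefficients) is independent of its badness (which depends only on $\omega_j$ with $2^{-j}\geq\ell(I)$). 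A standard symmetrization argument then lets me restrict the sum to pairs whose smaller cube is good, at the cost of an absolute factor depending on $\pi_{\good}$. By a further symmetry I may assume $\ell(I)\leq\ell(J)$.

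Goodness of $I$ with $\ell(J)=2^k\ell(I)$ and $k\geq r$ forces a geometric dichotomy: either $I\subset J$ with $\dist(I,\partial J)\geq\phi(2^{-k})\ell(J)$, or $I\cap J=\varnothing$. I organize each such pair around the minimal common ancestor $K\in\mathscr{D}^{\omega}$ (so $K=J$ in the nested case) and split according to the relative generations $(i,j)$ defined by $\ell(I)=2^{-i}\ell(K)$ and $\ell(J)=2^{-j}\ell(K)$. In the separated regime with $\min\{i,j\}>0$, the cancellation of both Haar functions combined with the smoothness estimate on $K(x,y)$ yields
\begin{equation*}
  |\pair{h_J}{Th_I}|\lesssim \Norm{K}{CZ_\psi}\,\tau(i,j)\,\frac{\sqrt{\abs{I}\abs{J}}}{\abs{K}},
\end{equation*}
the $\phi(2^{-k})^{-d}$ factor in $\tau(i,j)$ reflecting that goodness only separates $I$ from $\partial J$ by $\phi(2^{-k})\ell(J)$, and the $\psi(2^{-k}\phi(2^{-k})^{-1})$ factor being the kernel modulus evaluated at the effective separation scale. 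The same bound works in the nested regime as long as $\min\{i,j\}>0$.

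The delicate subcase is the nested regime with $\min\{i,j\}=0$, where one Haar function is non-cancellative. There I would split $Th_I^0=\abs{I}^{-1/2}T(1)\cdot 1_I+(\text{remainder})$ (and symmetrically if it is $h_J$ that is non-cancellative); the principal term, collected over all $K$, assembles into the paraproducts $\Pi_{T1}$ and $\Pi_{T^*1}^{*}$, which together form the sole non-cancellative shift $S^{00}_\omega$, normalized by $\Norm{T1}{\BMO_d}+\Norm{T^*1}{\BMO_d}\lesssim\Norm{T}{L^2\to L^2}$. The remainder, estimated by integrating the smoothness modulus over intermediate scales inside $J$, produces the function $\Psi$ appearing in $\tau(i,0)$. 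After this extraction, the coefficients $a_{IJK}$ satisfy $\abs{a_{IJK}}\leq\sqrt{\abs{I}\abs{J}}/\abs{K}$ once $\tau(i,j)(\Norm{T}{L^2\to L^2}+\Norm{K}{CZ_\psi})$ is factored out; $L^2$-boundedness of each $S^{ij}_{\omega}$ is automatic in the cancellative case and follows from $\BMO$ theory for the paraproduct in the $(0,0)$ case; summability over $(i,j)$ is precisely hypothesis~\eqref{eq:decay}. The main obstacle is the paraproduct extraction: one must isolate the non-cancellative contribution, verify that every remainder fits inside a cancellative shift with the advertised coefficient $\tau(i,j)$, and keep the total normalization proportional to $\Norm{T}{L^2\to L^2}+\Norm{K}{CZ_\psi}$.
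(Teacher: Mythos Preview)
Your overall skeleton---random Haar expansion, restriction to good smaller cubes via the independence of position and badness, reorganization of pairs under a common dyadic ancestor $K$, and identification of each fixed-$(i,j)$ block as a normalized shift---is exactly the paper's route. But two concrete points in your write-up are wrong and would block the execution.

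First, in the nested case $I\subsetneq J$ the minimal ancestor is $K=J$, so $j=0$ always; your claim that ``the same bound works in the nested regime as long as $\min\{i,j\}>0$'' is vacuous, and the $\Psi$-type coefficient $\tau(i,0)$ genuinely requires a different estimate from the separated one. Second, and more seriously, your description of the paraproduct extraction is off: no non-cancellative Haar function $h_I^0$ ever appears in the initial expansion, and the identity $Th_I^0=\abs{I}^{-1/2}T(1)\cdot 1_I+(\text{remainder})$ is not correct (nor does $h_I^0$ occur). The actual splitting, for $I\subsetneq J$ with $J_I$ the child of $J$ containing $I$, is
\[
  \pair{h_J}{Th_I}=\pair{1_{J_I^c}(h_J-\ave{h_J}_{J_I})}{Th_I}+\ave{h_J}_{I}\,\pair{T^*1}{h_I};
\]
summing the second term over all $J\supsetneq I$ collapses $\sum_{J\supsetneq I}\pair{g}{h_J}\ave{h_J}_I$ to $\ave{g}_I$ and produces $\Pi_{T^*1}^*$ (and symmetrically $\Pi_{T1}$ from the half with $\ell(J)<\ell(I)$), while the first term yields the cancellative $(i,0)$ shifts with the $\Psi$ bound. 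You also do not mention the equal case $I=J$ and the near-by disjoint case $\dist(I,J)\leq\ell(J)\phi(\ell(I)/\ell(J))$ (which by goodness forces $\ell(J)<2^r\ell(I)$); these need separate, though elementary, treatment using $\Norm{K}{CZ_0}$ or $\Norm{T}{L^2\to L^2}$ and contribute to the $(0,0)$ and small-$(i,j)$ shifts.
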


The first version of this theorem appeared in \cite{Hytonen:A2}, and another one in \cite{HPTV}.
The present proof is yet another variant of the same argument. It is slightly simpler in terms of the probabilistic tools that are used: no conditional probabilities are needed, although they were important for the original arguments.

In proving this theorem, we do not actually need to employ the full strength of the assumption that $T:L^2(\R^d)\to L^2(\R^d)$; rather it suffices to have the kernel conditions plus the following conditions of the $T1$ theorem of David--Journ\'e:
\begin{equation*}
\begin{split}
  \abs{\pair{1_Q}{T1_Q}} &\leq C_{WBP}\abs{Q}\quad\text{(weak boundedness property)},\\
   & T1\in\BMO(\R^d),\quad T^*1\in\BMO(\R^d).
\end{split}
\end{equation*}
Let us denote the smallest $C_{WBP}$ by $\Norm{T}{WBP}$. Then we have the following more precise version of the representation:

\begin{theorem}\label{thm:formula2}
Let $T$ be a Calder\'on--Zygmund operator with modulus of continuity satisfying the above assumption. Then it has an expansion, say for $f,g\in C^1_c(\R^d)$,
\begin{equation*}
\begin{split}
  \pair{g}{Tf}
  &=c\cdot\big(\Norm{K}{CZ_0}+\Norm{K}{CZ_\psi}\big)\Exp_{\omega} \sum_{\substack{i,j=0\\ \max\{i,j\}> 0}}^{\infty} \tau(i,j)\pair{g}{S^{ij}_{\omega}f} \\
  &+c\cdot\big(\Norm{K}{CZ_0}+\Norm{T}{WBP}\big)\Exp_{\omega}\pair{g}{S^{00}_{\omega}f}
    +\Exp_{\omega}\pair{g}{\Pi_{T1}^{\omega}f}+\Exp_{\omega}\pair{g}{(\Pi_{T^*1}^{\omega})^*f}
\end{split}
\end{equation*}
where $S^{ij}_{\omega}$ is a cancellative dyadic shift of parameters $(i,j)$ on the dyadic system $\mathscr{D}^{\omega}$, and $\Pi_{b}^{\omega}$ is a dyadic paraproduct on the dyadic system $\mathscr{D}^{\omega}$ associated with the $\BMO$-function $b\in\{T1,T^*1\}$.
\end{theorem}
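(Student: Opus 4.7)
The plan is to expand $\pair{g}{Tf}$ in the Haar basis of a random dyadic system $\mathscr{D}^{\omega}$ and rearrange the resulting double sum into pieces matching the five terms on the right-hand side. I first write
$$\pair{g}{Tf}=\sum_{I,J\in\mathscr{D}^{\omega}}\pair{g}{h_J}\pair{h_J}{Th_I}\pair{h_I}{f}$$
(with summation over Haar signatures implicit) and take $\Exp_{\omega}$. Using Lemma~\ref{lem:indep} together with the fact that $\pi_{\good}$ is a positive constant, I restrict the sum, at the cost of a factor $\pi_{\good}^{-1}$, to those pairs in which the smaller of $\{I,J\}$ is good in $\mathscr{D}^{\omega}$. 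This goodness supplies the separation from dyadic boundaries needed to exploit the Hölder smoothness of the kernel.

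Next I partition the good-pair sum by the geometric relation of $I$ and $J$: \emph{separated} ($I\cap J=\varnothing$), \emph{equal} ($I=J$), and strictly \emph{nested} ($I\subsetneq J$ or $J\subsetneq I$). For fixed parameters $(i,j)$, I regroup the terms by their smallest common ancestor $K$, with $\ell(I)=2^{-i}\ell(K)$ and $\ell(J)=2^{-j}\ell(K)$; each resulting block takes the form $\sum_K A_K$ with $A_K f=\sum_{I,J}a_{IJK}\pair{f}{h_I}h_J$, the template of a dyadic shift. Separated blocks will produce the cancellative shifts $S^{ij}_{\omega}$ with $\max\{i,j\}>0$, the equal block will produce $S^{00}_{\omega}$, and nested blocks will contribute both additional cancellative shifts and the paraproducts.

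For separated pairs, $\supp h_I\cap\supp h_J=\varnothing$, so the kernel representation applies, and Hölder continuity of $K$ combined with the goodness lower bound $\dist(I_{\text{small}},\partial J')\gtrsim\phi(2^{-k})\ell(J')$ for every ancestor $J'$ of $I_{\text{small}}$ at scale $2^k\ell(I_{\text{small}})$ with $k\geq r$ yields $|\pair{h_J}{Th_I}|\lesssim\Norm{K}{CZ_{\psi}}\tau(i,j)\sqrt{|I||J|}/|K|$, which is exactly the shift normalization after extracting $\tau(i,j)$. For the equal case, I decompose $h_I$ as a $\pm|I|^{-1/2}$-weighted combination of $\mathbf{1}_{I'}$ over children $I'\in\children(I)$; the diagonal child contributions are handled by $\Norm{T}{WBP}$ and the off-diagonal ones by $\Norm{K}{CZ_0}$, giving the bound for $S^{00}_{\omega}$.

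The main obstacle is the nested case, in which the paraproducts must be extracted. Taking for example $I\subsetneq J$, let $J'\in\children(J)$ be the child containing $I$, so that $h_J\equiv c_{J,J'}$ on $J'$ with $|c_{J,J'}|=|J|^{-1/2}$. Then
$$\pair{h_J}{Th_I}=\pair{h_J\mathbf{1}_{J\setminus J'}}{Th_I}+c_{J,J'}\pair{T^*1}{h_I}-c_{J,J'}\pair{\mathbf{1}_{(J')^c}}{Th_I},$$
using $\pair{1}{Th_I}=\pair{T^*1}{h_I}$. The first and third summands are controlled by Hölder continuity of $K$ (goodness places $I$ deep inside $J'$, hence far from $J\setminus J'$ and from $(J')^c$); reassembled, they furnish cancellative shifts with parameters $(i,0)$ carrying the required $\tau(i,0)$ factor. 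The middle summand depends on $J$ only through $c_{J,J'}$ and the identity of the child of $J$ containing $I$; summing over all $J\supsetneq J'$ it collapses, and the result is recognizable as the action of $(\Pi_{T^*1}^{\omega})^*$ on $f$. The mirror case $J\subsetneq I$ produces $\Pi_{T1}^{\omega}$ symmetrically. Finally, the $L^2$-normalization of each cancellative shift follows from the pointwise coefficient bound via the Pythagorean argument of Section~2.B, while the bounds $\Norm{T1}{\BMO}$, $\Norm{T^*1}{\BMO}\lesssim\Norm{K}{CZ_0}+\Norm{T}{WBP}$ (implicit in the $T1$-hypothesis) give the paraproduct normalization.
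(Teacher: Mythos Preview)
Your overall architecture is correct and matches the paper's: random Haar expansion restricted to good smaller cubes, then a case split by the geometry of $(I,J)$, with the nested case producing the paraproducts. But there is a genuine gap in your treatment of disjoint pairs.

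You claim that for all separated pairs, goodness gives $\dist(I_{\text{small}},\partial J')\gtrsim\phi(2^{-k})\ell(J')$ and hence the H\"older estimate yields the $\tau(i,j)$ decay. This conflates two different distances. Goodness controls $\dist(I,\partial K)$ for the common ancestor $K$, but the kernel smoothness estimate needs a lower bound on $\dist(I,J)$, and these are not the same: $I$ and $J$ can be adjacent (so $\dist(I,J)=0$) while both sit deep inside $K$. The paper therefore splits the disjoint case into $\sigma_{\operatorname{out}}$ (where $\dist(I,J)>\ell(J)\phi(\ell(I)/\ell(J))$) and $\sigma_{\operatorname{near}}$ (the complementary case). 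Only for $\sigma_{\operatorname{out}}$ does Lemma~\ref{lem:IveeJ} convert the distance hypothesis into $\ell(K)\phi(\ell(I)/\ell(K))\lesssim\dist(I,J)$, after which the H\"older bound gives the $\tau(i,j)$ factor. For $\sigma_{\operatorname{near}}$, the H\"older estimate is useless, but goodness forces $\ell(J)<2^r\ell(I)$ and $\ell(K)\leq 2^r\ell(I)$, so only finitely many $(i,j)$ with $1\leq j\leq i\leq r$ occur, and one simply uses the size bound $\Norm{K}{CZ_0}$ (no decay needed). You have not accounted for this near-by piece.

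A parallel issue appears in your nested case: when $\ell(I)\geq 2^{-r}\ell(J)$, goodness does \emph{not} place $I$ deep inside $J'$, so your claim that ``goodness places $I$ deep inside $J'$'' fails there. The paper handles this close-scale subcase separately, integrating $|Th_I|$ over $3I\setminus I$ with the $CZ_0$ bound (or alternatively using $\Norm{T}{L^2\to L^2}$) before invoking H\"older for the tail. Finally, your last sentence overreaches: the theorem statement leaves the paraproducts unnormalized, so no bound on $\Norm{T1}{\BMO}$ is needed here (and the bound you state omits $\Norm{K}{CZ_\psi}$ anyway).
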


\begin{remark}
Note that $\Pi^{\omega}_b=\Norm{b}{\BMO}\cdot S^{\omega}_b$, where $S^{\omega}_b=\Pi^{\omega}_b/\Norm{b}{\BMO}$ is a shift with the correct normalization. Hence, writing everything in terms of normalized shifts, as in Theorem~\ref{thm:formula}, we get the factor $\Norm{T1}{\BMO}\lesssim\Norm{T}{L^2\to L^2}+\Norm{K}{CZ_\psi}$ in the second-to-last term, and $\Norm{T^*1}{\BMO}\lesssim\Norm{T}{L^2\to L^2}+\Norm{K}{CZ_\psi}$ in the last one.
The proof will also show that both occurrences of the factor $\Norm{K}{CZ_0}$ could be replaced by $\Norm{T}{L^2\to L^2}$, giving the statement of Theorem~\ref{thm:formula} (since trivially $\Norm{T}{WBP}\leq\Norm{T}{L^2\to L^2}$).
\end{remark}

As a by-product, Theorem~\ref{thm:formula2} delivers a proof of the $T1$ theorem: under the above assumptions, the operator $T$ is already bounded on $L^2(\R^d)$. Namely, all the dyadic shifts $S^{ij}_{\omega}$ are uniformly bounded on $L^2(\R^d)$ by definition, and the convergence condition \eqref{eq:decay} ensures that so is their average representing the operator $T$. This by-product proof of the $T1$ theorem is not a coincidence, since the proof of Theorems~\ref{thm:formula} and \ref{thm:formula2} was actually inspired by the proof of the $T1$ theorem for non-doubling measures due to Nazarov--Treil--Volberg \cite{NTV:Tb} and its vector-valued extension \cite{Hytonen:nonhomog}.

A key to the proof of the dyadic representation is a random expansion of $T$ in terms of Haar functions $h_I$, where the bad cubes are avoided:

\begin{proposition}
\begin{equation*}
  \pair{g}{Tf}
  =\frac{1}{\pi_{\good}}\Exp_{\omega}\sum_{I,J\in\mathscr{D}^{\omega}}1_{\good}(\operatorname{smaller}\{I,J\})\cdot
  \pair{g}{h_{J}}\pair{h_{J}}{Th_{I}}\pair{h_{I}}{f},
\end{equation*}
where
\begin{equation*}
  \operatorname{smaller}\{I,J\}:=\begin{cases} I & \text{if }\ell(I)\leq\ell(J), \\ J & \text{if }\ell(J)>\ell(I). \end{cases}
\end{equation*}
\end{proposition}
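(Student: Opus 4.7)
The plan is to combine the deterministic Haar expansion with Lemma~\ref{lem:indep} via a scale separation that isolates the $\omega$-dependence of the summand from that of the badness indicator. Since for each realization $\omega$ the cancellative Haar functions $\{h_I\}_{I\in\mathscr{D}^\omega}$ form an orthonormal basis of $L^2(\R^d)$, the pointwise identity
\[
\pair{g}{Tf}=\sum_{I,J\in\mathscr{D}^\omega}F(I,J),\qquad F(I,J):=\pair{g}{h_J}\pair{h_J}{Th_I}\pair{h_I}{f},
\]
holds for every $\omega$, and its left side is $\omega$-independent, so equals $\Exp_\omega$ of the right side. I split the double sum according to $\ell(I)\le\ell(J)$ versus $\ell(I)>\ell(J)$: in the first region $\operatorname{smaller}\{I,J\}=I$ and in the second $\operatorname{smaller}\{I,J\}=J$, so I will insert $1_{\good}(I)/\pi_{\good}$ and $1_{\good}(J)/\pi_{\good}$ respectively and combine.

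On the region $\ell(I)\le\ell(J)$, Haar completeness in $J$ collapses the inner sum:
\[
\sum_{\substack{I,J\in\mathscr{D}^\omega\\\ell(I)\le\ell(J)}}\!\!F(I,J)
 =\sum_{I\in\mathscr{D}^\omega}\pair{h_I}{f}\pair{P^\omega_{\ge\ell(I)}g}{Th_I},\qquad
 P^\omega_{\ge s}g:=\!\!\sum_{\substack{K\in\mathscr{D}^\omega\\\ell(K)\ge s}}\!\!\pair{g}{h_K}h_K.
\]
The key \emph{scale separation} is that $P^\omega_{\ge\ell(I)}g$, written as $g-P^\omega_{<\ell(I)}g$, is measurable with respect to the fine-scale coordinates $(\omega_j)_{j>m}$ alone, where $\ell(I)=2^{-m}$. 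Indeed, every cube $K\dot+\omega$ is translated by $\sum_{j:\,2^{-j}<\ell(K)}2^{-j}\omega_j$, so for $\ell(K)<\ell(I)$ the Haar grid at the scale of $K$ only uses coordinates $\omega_j$ with $2^{-j}<\ell(I)$, i.e.\ $j>m$; hence $P^\omega_{<\ell(I)}g$ is $(\omega_j)_{j>m}$-measurable, and since $g$ is deterministic, so is $P^\omega_{\ge\ell(I)}g$. Combining with the fact that $h_I$ and $\pair{h_I}{f}$ depend only on the position $I=I_0\dot+\omega$ (also a function of $(\omega_j)_{j>m}$), the whole summand $\pair{h_I}{f}\pair{P^\omega_{\ge\ell(I)}g}{Th_I}$ is $(\omega_j)_{j>m}$-measurable.

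Since $1_{\good}(I\dot+\omega)$ is $(\omega_j)_{j\le m}$-measurable and the two coordinate families are independent, Lemma~\ref{lem:indep} (with marginal goodness probability $\pi_{\good}$) yields termwise in a reference cube $I_0\in\mathscr{D}^0$
\[
\Exp_\omega\bigl[1_{\good}(I)\pair{h_I}{f}\pair{P^\omega_{\ge\ell(I)}g}{Th_I}\bigr]
=\pi_{\good}\Exp_\omega\bigl[\pair{h_I}{f}\pair{P^\omega_{\ge\ell(I)}g}{Th_I}\bigr],
\]
and after summing in $I_0$ and unfolding the projection, $\Exp_\omega\!\sum_{\ell(I)\le\ell(J)}\!1_{\good}(I)F(I,J)=\pi_{\good}\Exp_\omega\!\sum_{\ell(I)\le\ell(J)}\!F(I,J)$. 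The symmetric argument for $\ell(I)>\ell(J)$ collapses the $I$-sum to $\sum_J\pair{g}{h_J}\pair{h_J}{TP^\omega_{>\ell(J)}f}$ and observes in the same way that $P^\omega_{>\ell(J)}f$ is $(\omega_j)_{j>m'}$-measurable with $\ell(J)=2^{-m'}$; this produces the analogous identity with $1_{\good}(J)/\pi_{\good}$. Adding the two partial identities and dividing by $\pi_{\good}$ reproduces the proposition.

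The main obstacle is the scale-separation claim: individually, the Haar projections $Q^\omega_{2^{-k}}g$ appearing in $P^\omega_{\ge\ell(I)}g$ at scales $2^{-k}\ge\ell(I)$ do shift under the coarse-scale coordinates $(\omega_j)_{j\le m}$, but the full sum must not, as is seen cleanly via the identity $P^\omega_{\ge\ell(I)}g=g-P^\omega_{<\ell(I)}g$ together with $P^\omega_{<\ell(I)}g$ being manifestly $(\omega_j)_{j>m}$-measurable. This is where the consistent nesting of the cancellative Haar grids across scales really enters the argument. Once the measurability is granted, the rest reduces to a single direct application of Lemma~\ref{lem:indep} and requires no conditional probabilities.
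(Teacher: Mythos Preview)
Your proof is correct and rests on the same core ingredient as the paper, namely Lemma~\ref{lem:indep}, but the organization differs. The paper first inserts $1_{\good}(I)$ into the \emph{entire} double sum by collapsing over all $J$ to recover $\pair{g}{Th_I}\pair{h_I}{f}$, which manifestly depends only on the position of $I$; then, on the half $\ell(I)>\ell(J)$, it removes $1_{\good}(I)$ again (since each individual $F(I,J)$ there is fine-scale measurable relative to $\ell(I)$) and compares with the basic identity~\eqref{eq:basic} to deduce the desired formula on the complementary half $\ell(I)\le\ell(J)$ indirectly. You instead treat the two halves directly and symmetrically, and your complement identity $P^\omega_{\ge\ell(I)}g=g-P^\omega_{<\ell(I)}g$ is a clean way to see the required measurability of the partial $J$-sum---an observation the paper sidesteps by never forming that partial sum. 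Both routes avoid conditional probabilities and amount to the same independence computation; your version is arguably more transparent in its symmetry, while the paper's back-and-forth with~\eqref{eq:basic} avoids needing to argue that a sum of coarse-scale-dependent terms is nonetheless fine-scale measurable.
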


\begin{proof}
Recall that
\begin{equation*}
  f=\sum_{I\in\mathscr{D}^0}\pair{f}{h_{I\dot+\omega}}h_{I\dot+\omega}
\end{equation*}
for any fixed $\omega\in\Omega$; and we can also take the expectation $\Exp_{\omega}$ of both sides of this identity.

Let
\begin{equation*}
  1_{\good}(I\dot+\omega):=\begin{cases} 1, & \text{if $I\dot+\omega$ is good},\\ 0, & \text{else}\end{cases}
\end{equation*}
We make use of the above random Haar expansion of $f$, multiply and divide by
\begin{equation*}
  \pi_{\good}=\prob_{\omega}(I\dot+\omega\good)=\Exp_{\omega}1_{\good}(I\dot+\omega),
\end{equation*}
and use the independence from Lemma~\ref{lem:indep} to get:
\begin{equation*}
\begin{split}
  \pair{g}{Tf}
  &=\Exp_{\omega}\sum_{I}\pair{g}{Th_{I\dot+\omega}}\pair{h_{I\dot+\omega}}{f} \\
  &=\frac{1}{\pi_{\good}}\sum_{I}\Exp_{\omega}[1_{\good}(I\dot+\omega)] \Exp_{\omega}[\pair{g}{Th_{I\dot+\omega}}\pair{h_{I\dot+\omega}}{f}] \\
  &=\frac{1}{\pi_{\good}}\Exp_{\omega}\sum_{I}1_{\good}(I\dot+\omega) \pair{g}{Th_{I\dot+\omega}}\pair{h_{I\dot+\omega}}{f}  \\
  &=\frac{1}{\pi_{\good}}\Exp_{\omega}\sum_{I,J}1_{\good}(I\dot+\omega) \pair{g}{h_{J\dot+\omega}}\pair{h_{J\dot+\omega}}{Th_{I\dot+\omega}}\pair{h_{I\dot+\omega}}{f}.
\end{split}
\end{equation*}
On the other hand, using independence again in half of this double sum, we have
\begin{equation*}
\begin{split}
  &\frac{1}{\pi_{\good}}\sum_{\ell(I)>\ell(J)}\Exp_{\omega}[1_{\good}(I\dot+\omega) \pair{g}{h_{J\dot+\omega}}\pair{h_{J\dot+\omega}}{Th_{I\dot+\omega}}\pair{h_{I\dot+\omega}}{f} ] \\
  &=\frac{1}{\pi_{\good}}\sum_{\ell(I)>\ell(J)}\Exp_{\omega}[1_{\good}(I\dot+\omega)]
     \Exp_{\omega}[ \pair{g}{h_{J\dot+\omega}}\pair{h_{J\dot+\omega}}{Th_{I\dot+\omega}}\pair{h_{I\dot+\omega}}{f} ] \\
   &= \Exp_{\omega}\sum_{\ell(I)>\ell(J)}
    \pair{g}{h_{J\dot+\omega}}\pair{h_{J\dot+\omega}}{Th_{I\dot+\omega}}\pair{h_{I\dot+\omega}}{f},
\end{split}
\end{equation*}
and hence
\begin{equation*}
\begin{split}
    \pair{g}{Tf}
    &= \frac{1}{\pi_{\good}}\Exp_{\omega}\sum_{\ell(I)\leq\ell(J)}
        1_{\good}(I\dot+\omega) \pair{g}{h_{J\dot+\omega}}\pair{h_{J\dot+\omega}}{Th_{I\dot+\omega}}\pair{h_{I\dot+\omega}}{f} \\
     &\qquad+\Exp_{\omega}\sum_{\ell(I)>\ell(J)}
        \pair{g}{h_{J\dot+\omega}}\pair{h_{J\dot+\omega}}{Th_{I\dot+\omega}}\pair{h_{I\dot+\omega}}{f}.
\end{split}
\end{equation*}
Comparison with the basic identity
\begin{equation}\label{eq:basic}
  \pair{g}{Tf}
  =\Exp_{\omega}\sum_{I,J}\pair{g}{h_{J\dot+\omega}}\pair{h_{J\dot+\omega}}{Th_{I\dot+\omega}}\pair{h_{I\dot+\omega}}{f}
\end{equation}
shows that
\begin{equation*}
\begin{split}
  &\Exp_{\omega}\sum_{\ell(I)\leq\ell(J)}
        \pair{g}{h_{J\dot+\omega}}\pair{h_{J\dot+\omega}}{Th_{I\dot+\omega}}\pair{h_{I\dot+\omega}}{f} \\
    &= \frac{1}{\pi_{\good}}\Exp_{\omega}\sum_{\ell(I)\leq\ell(J)}
        1_{\good}(I\dot+\omega) \pair{g}{h_{J\dot+\omega}}\pair{h_{J\dot+\omega}}{Th_{I\dot+\omega}}\pair{h_{I\dot+\omega}}{f}.
\end{split}  
\end{equation*}
Symmetrically, we also have
\begin{equation*}
\begin{split}
  &\Exp_{\omega}\sum_{\ell(I)>\ell(J)}
        \pair{g}{h_{J\dot+\omega}}\pair{h_{J\dot+\omega}}{Th_{I\dot+\omega}}\pair{h_{I\dot+\omega}}{f} \\
    &= \frac{1}{\pi_{\good}}\Exp_{\omega}\sum_{\ell(I)>\ell(J)}
        1_{\good}(J\dot+\omega) \pair{g}{h_{J\dot+\omega}}\pair{h_{J\dot+\omega}}{Th_{I\dot+\omega}}\pair{h_{I\dot+\omega}}{f},
\end{split}  
\end{equation*}
and this completes the proof.
\end{proof}

This is essentially the end of probability in this proof. Henceforth, we can simply concentrate on the summation inside $\Exp_{\omega}$, for a fixed value of $\omega\in\Omega$, and manipulate it into the required form. Moreover, we will concentrate on the half of the sum with $\ell(J)\geq\ell(I)$, the other half being handled symmetrically. We further divide this sum into the following parts:
\begin{equation*}
\begin{split}
  \sum_{\ell(I)\leq\ell(J)}
  &=\sum_{\dist(I,J)>\ell(J)\phi(\ell(I)/\ell(J))}
    +\sum_{I\subsetneq J}+\sum_{I=J}
    +\sum_{\substack{\dist(I,J)\leq\ell(J)\phi(\ell(I)/\ell(J))\\ I\cap J=\varnothing}} \\
  &=:\sigma_{\operatorname{out}}+\sigma_{\operatorname{in}}+\sigma_{=}+\sigma_{\operatorname{near}}.
\end{split}
\end{equation*}
In order to recognize these series as sums of dyadic shifts, we need to locate, for each pair $(I,J)$ appearing here, a common dyadic ancestor which contains both of them. The existence of such containing cubes, with control on their size, is provided by the following:

\begin{lemma}\label{lem:IveeJ}
If $I\in\mathscr{D}$ is good and $J\in\mathscr{D}$ is a disjoint ($J\cap I=\varnothing$) cube with $\ell(J)\geq\ell(I)$, then there exists $K\supseteq I\cup J$ which satisfies
\begin{equation*}
\begin{split}
  \ell(K) &\leq 2^r\ell(I), \qquad\text{if}\qquad \dist(I,J)\leq\ell(J)\phi\Big(\frac{\ell(I)}{\ell(J)}\Big), \\
  \ell(K)\phi\Big(\frac{\ell(I)}{\ell(K)}\Big) &\leq 2^r\dist(I,J), \qquad\text{if}\qquad\dist(I,J)>\ell(J)\phi\Big(\frac{\ell(I)}{\ell(J)}\Big).
\end{split}
\end{equation*}
\end{lemma}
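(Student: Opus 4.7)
The plan is to take $K = I \vee J$, the smallest common dyadic ancestor of $I$ and $J$, and exploit the goodness of $I$ against the child of $K$ that contains $I$. Since $I \cap J = \varnothing$, by minimality of $K$ the cubes $I$ and $J$ lie in distinct dyadic children $K_I, K_J$ of $K$. These must differ in at least one coordinate direction; in such a coordinate they form the two halves of $K$ and share the midline of $K$ as a common boundary face of $K_I$. Because $J \subseteq K_J$ lies on the opposite side of that face from $I \subseteq K_I$, projecting to this coordinate yields the key geometric estimate
\begin{equation*}
  \dist(I, \partial K_I) \leq \dist(I, J).
\end{equation*}

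Having this, everything else is bookkeeping. I split into two alternatives: either $\ell(K_I) \geq 2^r \ell(I)$, in which case the goodness of $I$ applied to the ancestor $K_I$ combined with the geometric estimate gives
\begin{equation*}
  \phi\bigl(\ell(I)/\ell(K_I)\bigr)\,\ell(K_I) < \dist(I, \partial K_I) \leq \dist(I, J),
\end{equation*}
or else $\ell(K) \leq 2^r \ell(I)$ (using that $\ell(K)/\ell(I)$ is a power of $2$). In the near regime $\dist(I,J) \leq \ell(J)\phi(\ell(I)/\ell(J))$, the first alternative is impossible: $\ell(K_I) \geq \ell(J)$ together with $t \mapsto \phi(t)/t$ being decreasing gives $\phi(\ell(I)/\ell(K_I))\,\ell(K_I) \geq \phi(\ell(I)/\ell(J))\,\ell(J) \geq \dist(I,J)$, a contradiction. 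Hence the second alternative holds, proving the near case.

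In the far regime $\dist(I,J) > \ell(J)\phi(\ell(I)/\ell(J))$, I verify the conclusion in each alternative. If $\ell(K_I) \geq 2^r \ell(I)$, so that $\ell(K) = 2\ell(K_I)$, the monotonicity of $\phi$ combined with the goodness estimate yields
\begin{equation*}
  \ell(K)\,\phi(\ell(I)/\ell(K)) \leq 2\,\ell(K_I)\,\phi(\ell(I)/\ell(K_I)) < 2\,\dist(I,J) \leq 2^r \dist(I,J).
\end{equation*}
If instead $\ell(K) \leq 2^r \ell(I)$, then $\phi(t) \geq t$ upgrades the far hypothesis to $\dist(I,J) > \ell(J)\cdot \ell(I)/\ell(J) = \ell(I)$, whence
\begin{equation*}
  \ell(K)\,\phi(\ell(I)/\ell(K)) \leq \ell(K) \leq 2^r \ell(I) < 2^r \dist(I,J).
\end{equation*}
The only step with genuine geometric content is the inequality $\dist(I, \partial K_I) \leq \dist(I, J)$; once that is in hand the result reduces to combining the definition of badness with the monotonicity properties of $\phi$ and $\phi(t)/t$.
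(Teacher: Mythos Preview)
Your computations are correct and the strategy---apply goodness of $I$ to a large dyadic ancestor and compare $\dist(I,\partial\,\cdot\,)$ with $\dist(I,J)$---is the same one the paper uses. The organization differs: you fix $K=I\vee J$ once and apply goodness to its child $K_I$, whereas the paper selects different candidate cubes in the two cases ($K=I^{(r)}$ in the near case; in the far case the minimal $K\supset I$ with $\ell(K)\geq 2^r\ell(I)$ and $\dist(I,J)\leq\ell(K)\phi(\ell(I)/\ell(K))$) and then \emph{proves} that $J\subseteq K$. Your route is tidier once $I\vee J$ is available.

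That is exactly where there is a genuine gap: you assume that a smallest common dyadic ancestor exists, but in the systems $\mathscr{D}^\omega$ of the paper two cubes need not have any common ancestor at all (e.g.\ $[-1,0)$ and $[0,1)$ in $\mathscr{D}^0$). Indeed, in the paper $I\vee J$ is only \emph{defined} after this lemma, whose role is partly to guarantee existence. The repair is short and in the spirit of your own argument: if $J\not\subseteq I^{(m)}$ for every $m$, then by nestedness $J\subseteq(I^{(m)})^c$ whenever $\ell(I^{(m)})>\ell(J)$, and goodness of $I$ yields
\[
  \dist(I,J)\geq\dist(I,\partial I^{(m)})>\ell(I^{(m)})\,\phi\!\Big(\frac{\ell(I)}{\ell(I^{(m)})}\Big)\to\infty
\]
as $m\to\infty$ (since $\phi(t)/t\to\infty$), a contradiction. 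Add this sentence and your proof is complete.
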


\begin{proof}
Let us start with the following initial observation: if $K\in\mathscr{D}$ satisfies $I\subseteq K$, $J\subset K^c$, and $\ell(K)\geq 2^r\ell(I)$, then
\begin{equation*}
  \ell(K)\phi\Big(\frac{\ell(I)}{\ell(K)}\Big)<\dist(I,\partial K)=\dist(I,K^c)\leq\dist(I,J).
\end{equation*}

\subsubsection*{Case $\dist(I,J)\leq\ell(J)\phi(\ell(I)/\ell(J))$}
As $I\cap J=\varnothing$, we have $\dist(I,J)=\dist(I,\partial J)$, and since $I$ is good, this implies $\ell(J)< 2^r\ell(I)$.
Let $K=I^{(r)}$, and assume for contradiction that $J\subset K^c$. Then the initial observation implies that
\begin{equation*}
  \ell(K)\phi\Big(\frac{\ell(I)}{\ell(K)}\Big)<\dist(I,J)\leq\ell(J)\phi\Big(\frac{\ell(I)}{\ell(J)}\Big).
\end{equation*}
Dividing both sides by $\ell(I)$ and recalling that $\phi(t)/t$ is decreasing, this implies that $\ell(K)<\ell(J)$, a contradiction with $\ell(K)=2^r\ell(I)>\ell(J)$.
Hence $J\not\subset K^c$, and since $\ell(J)<\ell(K)$, this implies that $J\subset K$.

\subsubsection*{Case $\dist(I,J)>\ell(J)\phi(\ell(I)/\ell(J))$}
Consider the minimal $K\supset I$ with $\ell(K)\geq 2^r\ell(I)$ and $\dist(I,J)\leq\ell(K)\phi(\ell(I)/\ell(K))$. (Since $\phi(t)/t\to\infty$ as $t\to 0$, this bound holds for all large enough $K$.) Then (since $\phi(t)/t$ is decreasing) $\ell(K)>\ell(J)$, and by the initial observation, $J\not\subset K^c$. Hence either $J\subset K$, and it suffices to estimate $\ell(K)$.

By the minimality of $K$, there holds at least one of
\begin{equation*}
  \tfrac12\ell(K)<2^r\ell(I)\qquad\text{or}\qquad \tfrac12\ell(K)\phi\Big(\frac{\ell(I)}{\tfrac12\ell(K)}\Big)<\dist(I,J),
\end{equation*}
and the latter immediately implies that $\ell(K)\phi(\ell(I)/\ell(K))<2\dist(I,J)$. In the first case, since $\ell(I)\leq\ell(J)\leq\ell(K)$, we have
\begin{equation*}
  \ell(K)\phi\Big(\frac{\ell(I)}{\ell(K)}\Big)
  \leq 2^r\ell(I)\Big(\frac{\ell(I)}{\ell(K)}\Big)
  \leq 2^r\ell(J)\Big(\frac{\ell(I)}{\ell(J)}\Big)
  <2^r\dist(I,J),
\end{equation*}
so the required bound is true in each case.
\end{proof}

We denote the minimal such $K$ by $I\vee J$, thus
\begin{equation*}
  I\vee J:=\bigcap_{K\supseteq I\cup J} K.
\end{equation*}

\subsection{Separated cubes, $\sigma_{\operatorname{out}}$}

We reorganize the sum $\sigma_{\operatorname{out}}$ with respect to the new summation variable $K=I\vee J$, as well as the relative size of $I$ and $J$ with respect to $K$:
\begin{equation*}
  \sigma_{\operatorname{out}}
  =\sum_{j=1}^{\infty}\sum_{i=j}^{\infty}\sum_K \sum_{\substack{\dist(I,J)>\ell(J)\phi(\ell(I)/\ell(J))\\ I\vee J=K \\ \ell(I)=2^{-i}\ell(K), \ell(J)=2^{-j}\ell(K)}}.
\end{equation*}
Note that we can start the summation from $1$ instead of $0$, since the disjointness of $I$ and $J$ implies that $K=I\vee J$ must be strictly larger than either of $I$ and $J$.
The goal is to identify the quantity in parentheses as a decaying factor times a cancellative averaging operator with parameters $(i,j)$.

\begin{lemma}
For $I$ and $J$ appearing in $\sigma_{\operatorname{out}}$, we have
\begin{equation*}
  \abs{\pair{h_J}{Th_I}}
  \lesssim\Norm{K}{CZ_\psi}\frac{\sqrt{\abs{I}\abs{J}}}{\abs{K}}\phi\Big(\frac{\ell(I)}{\ell(K)}\Big)^{-d}\psi\Big(\frac{\ell(I)}{\ell(K)}\phi\Big(\frac{\ell(I)}{\ell(K)}\Big)^{-1}\Big),
  \quad K=I\vee J.
\end{equation*}
\end{lemma}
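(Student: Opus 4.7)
The plan is to combine the kernel representation of $T$, the cancellation $\int h_I=0$ of the Haar function on the smaller cube, and the H\"older regularity of the kernel, together with the geometric information supplied by Lemma~\ref{lem:IveeJ}. Since $I\cap J=\varnothing$, the pairing is an honest double integral
\begin{equation*}
  \pair{h_J}{Th_I}=\iint h_J(x)K(x,y)h_I(y)\ud y\ud x,
\end{equation*}
and using $\int h_I=0$ I would subtract $K(x,c_I)$, where $c_I$ denotes the centre of $I$, so that the integrand becomes $h_J(x)[K(x,y)-K(x,c_I)]h_I(y)$.

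To apply the modulus-of-continuity estimate on $K$, I need $\abs{x-y}>2\abs{y-c_I}$ for all $x\in J$, $y\in I$. This is precisely what the $\sigma_{\out}$ condition delivers: $\abs{y-c_I}\leq\ell(I)/2$ in the $\ell^\infty$ norm, while
\begin{equation*}
  \abs{x-y}\geq\dist(I,J)>\ell(J)\phi\Big(\frac{\ell(I)}{\ell(J)}\Big)\geq\ell(J)\cdot\frac{\ell(I)}{\ell(J)}=\ell(I),
\end{equation*}
using the built-in inequality $\phi(t)\geq t$. The regularity assumption then gives
\begin{equation*}
  \abs{K(x,y)-K(x,c_I)}\leq\frac{\Norm{K}{CZ_\psi}}{\abs{x-y}^d}\psi\Big(\frac{\abs{y-c_I}}{\abs{x-y}}\Big).
\end{equation*}
Next I invoke the far-case conclusion of Lemma~\ref{lem:IveeJ}, namely $\ell(K)\phi(\ell(I)/\ell(K))\leq 2^r\dist(I,J)$ for $K=I\vee J$, to get $\abs{x-y}\gtrsim\ell(K)\phi(\ell(I)/\ell(K))$ pointwise. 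Since $\psi$ is increasing and $\abs{y-c_I}\leq\ell(I)/2$, the denominator $\abs{x-y}^d$ and the argument of $\psi$ are both controlled, up to fixed $r,d$-dependent constants absorbed in $\lesssim$, by $[\ell(K)\phi(\ell(I)/\ell(K))]^d=\abs{K}\phi(\ell(I)/\ell(K))^d$ and $(\ell(I)/\ell(K))/\phi(\ell(I)/\ell(K))$, respectively.

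Finally, integration in $x$ and $y$ brings out $\int\abs{h_I}\cdot\int\abs{h_J}\leq\abs{I}^{1/2}\abs{J}^{1/2}$ from Haar normalization, and assembling all factors yields the claimed bound. The main obstacle is purely bookkeeping: checking that the kernel smoothness condition applies (which the $\sigma_{\out}$ separation combined with $\phi(t)\geq t$ just barely delivers), and cleanly extracting the right geometric scale $\ell(K)\phi(\ell(I)/\ell(K))$ from Lemma~\ref{lem:IveeJ}---the rest is routine.
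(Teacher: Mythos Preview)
Your proposal is correct and follows essentially the same route as the paper's proof: use the cancellation of $h_I$ to insert $K(x,y)-K(x,y_I)$, apply the $CZ_\psi$ estimate (the paper bounds $\abs{x-y}\geq\dist(I,J)$ directly, while you additionally spell out why $\abs{x-y}>2\abs{y-c_I}$), and then convert $\dist(I,J)$ into $\ell(K)\phi(\ell(I)/\ell(K))$ via the far case of Lemma~\ref{lem:IveeJ}. The only implicit step---absorbing the $2^r$ factor inside $\psi$---is harmless since $\psi(t)/t$ decreasing gives $\psi(2^r t)\leq 2^r\psi(t)$.
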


\begin{proof}
Using the cancellation of $h_I$, standard estimates, and Lemma~\ref{lem:IveeJ}
\begin{equation*}
\begin{split}
  \abs{\pair{h_J}{Th_I}}
  &=\Babs{\iint h_J(x)K(x,y)h_I(y)\ud y\ud x} \\
  &=\Babs{\iint h_J(x)[K(x,y)-K(x,y_I)]h_I(y)\ud y\ud x} \\
  &\lesssim\Norm{K}{CZ_\psi}\iint \abs{h_J(x)}\frac{1}{\dist(I,J)^d}\psi\Big(\frac{\ell(I)}{\dist(I,J)}\Big)\abs{h_I(y)}\ud y\ud x \\
  &=\Norm{K}{CZ_\psi}\frac{1}{\dist(I,J)^d}\psi\Big(\frac{\ell(I)}{\dist(I,J)}\Big)\Norm{h_J}{1}\Norm{h_I}{1} \\
  &\lesssim\Norm{K}{CZ_\psi}\frac{1}{\ell(K)^d}\phi\Big(\frac{\ell(I)}{\ell(K)}\Big)^{-d}\psi\Big(\frac{\ell(I)}{\ell(K)}\phi\Big(\frac{\ell(I)}{\ell(K)}\Big)^{-1}\Big)\sqrt{\abs{J}}\sqrt{\abs{I}}.\qedhere
\end{split}
\end{equation*}
\end{proof}

\begin{lemma}
\begin{equation*}
\begin{split}
  \sum_{\substack{\dist(I,J)>\ell(J)\phi(\ell(I)/\ell(J))\\ I\vee J=K \\ \ell(I)=2^{-i}\ell(K)\leq \ell(J)=2^{-j}\ell(K)}}
  &1_{\good}(I)\cdot  \pair{g}{h_{J}}\pair{h_{J}}{Th_{I}}\pair{h_{I}}{f}  \\
  &=\Norm{K}{CZ_\psi}\phi(2^{-i})^{-d}\psi\big(2^{-i}\phi(2^{-i})^{-1}\big)\pair{g}{A_K^{ij}f},
\end{split}
\end{equation*}
where $A_K^{ij}$ is a cancellative averaging operator with parameters $(i,j)$.
\end{lemma}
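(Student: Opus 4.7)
The plan is to read off the coefficients of $A_K^{ij}$ directly from the summand and verify that the resulting operator qualifies as a cancellative averaging operator with parameters $(i,j)$. For each $K\in\mathscr{D}^{\omega}$ and each pair $I,J\subseteq K$ with $\ell(I)=2^{-i}\ell(K)$ and $\ell(J)=2^{-j}\ell(K)$, set
\[
  a_{IJK} := \frac{1_{\good}(I)\cdot 1_{\{\dist(I,J)>\ell(J)\phi(\ell(I)/\ell(J)),\,I\vee J=K\}}\cdot\pair{h_J}{Th_I}}{c\cdot\Norm{K}{CZ_\psi}\,\phi(2^{-i})^{-d}\,\psi\bigl(2^{-i}\phi(2^{-i})^{-1}\bigr)},
\]
where $c$ is the absolute constant implicit in the $\lesssim$ of the preceding lemma, and declare $A_K^{ij}f := \sum_{I,J} a_{IJK}\pair{f}{h_I}h_J$, the sum running over $I,J\subseteq K$ of the prescribed sizes.

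By construction the averaging-operator size bound $\abs{a_{IJK}}\leq\sqrt{\abs{I}\abs{J}}/\abs{K}$ holds: on the support of the indicators it is exactly the preceding lemma (via the choice of $c$), while off that support the coefficient vanishes. Cancellativity of the associated Haar functions is immediate: the separation indicator $\dist(I,J)>\ell(J)\phi(\ell(I)/\ell(J))>0$ forces $I\cap J=\varnothing$, which together with $I,J\subseteq I\vee J=K$ forces $I,J\subsetneq K$ and in particular $i,j\geq 1$; hence the $h_I,h_J$ drawn from the Haar expansions of $f$ and $g$ are the usual cancellative Haar functions. Thus $A_K^{ij}$ is a cancellative averaging operator of parameters $(i,j)$ in the sense of the earlier definition.

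Finally, pairing $A_K^{ij}f$ with $g$ and using that the Haar functions are real,
\[
  \pair{g}{A_K^{ij}f} = \sum_{I,J} a_{IJK}\,\pair{h_I}{f}\pair{g}{h_J},
\]
so multiplying through by $c\cdot\Norm{K}{CZ_\psi}\phi(2^{-i})^{-d}\psi(2^{-i}\phi(2^{-i})^{-1})$ reproduces the left-hand side of the lemma, with the absolute constant $c$ absorbed into the prefactor (as is customary in this framework, and in harmony with the dimensional constant in Theorem~\ref{thm:formula}). The main point to flag is that there is essentially no obstacle here: all of the substantive estimation was done in the preceding size bound for $\pair{h_J}{Th_I}$, and the present lemma is a packaging step that recognizes the separated part $\sigma_{\out}$ of the Haar expansion of $\pair{g}{Tf}$ as a superposition of dyadic-shift components, with decay factor $\phi(2^{-i})^{-d}\psi(2^{-i}\phi(2^{-i})^{-1})$ in the shift parameter $i=\max\{i,j\}$.
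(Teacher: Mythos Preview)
Your proof is correct and follows exactly the paper's approach: define the coefficients of $A_K^{ij}$ by dividing the summand by the prefactor, then invoke the preceding lemma (with $\ell(I)/\ell(K)=2^{-i}$) to verify the size bound $\abs{a_{IJK}}\leq\sqrt{\abs{I}\abs{J}}/\abs{K}$. One small remark: the cancellativity of the $h_I,h_J$ here does not hinge on $i,j\geq 1$ as your ``hence'' suggests---it is simply because these Haar functions come from the (cancellative) Haar expansions of $f$ and $g$ set up at the outset; the observation $i,j\geq 1$ is correct but serves only to place this contribution in the $\sigma_{\out}$ range of the synthesis.
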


\begin{proof}
By the previous lemma, substituting $\ell(I)/\ell(K)=2^{-i}$,
\begin{equation*}
    \abs{\pair{h_J}{Th_I}}
  \lesssim\Norm{K}{CZ_\psi}\frac{\sqrt{\abs{I}\abs{J}}}{\abs{K}}\phi(2^{-i})^{-d}\psi\big(2^{-i}\phi(2^{-i})^{-1}\big),
\end{equation*}
and the first factor is precisely the required size of the coefficients of $A_K^{ij}$.
\end{proof}

Summarizing, we have
\begin{equation*}
  \sigma_{\operatorname{out}}
  =\Norm{K}{CZ_\psi}\sum_{j=1}^{\infty}\sum_{i=j}^{\infty}\phi(2^{-i})^{-d}\psi\big(2^{-i}\phi(2^{-i})^{-1}\big)\pair{g}{S^{ij}f}.
\end{equation*}

\subsection{Contained cubes, $\sigma_{\operatorname{in}}$}

When $I\subsetneq J$, then $I$ is contained in some subcube of $J$, which we denote by $J_I$.

\begin{equation*}
\begin{split}
  \pair{h_J}{Th_I}
  &=\pair{1_{J_I^c}h_J}{Th_I}+\pair{1_{J_I}h_J}{Th_I} \\
  &=\pair{1_{J_I^c}h_J}{Th_I}+\ave{h_J}_{J_I}\pair{1_{J_I}}{Th_I} \\
  &=\pair{1_{J_I^c}(h_J-\ave{h_J}_{J_I})}{Th_I}+\ave{h_J}_{I}\pair{1}{Th_I},
\end{split}
\end{equation*}
where we noticed that $h_J$ is constant on $J_I\supseteq I$.

\begin{lemma}
\begin{equation*}
  \abs{ \pair{1_{J_I^c}(h_J-\ave{h_J}_{J_I})}{Th_I} }
  \lesssim\big(\Norm{K}{CZ_0}+\Norm{K}{CZ_\psi}\big)\Big(\frac{\abs{I}}{\abs{J}}\Big)^{1/2}\Psi\Big(\frac{\ell(I)}{\ell(J)}\phi\big(\frac{\ell(I)}{\ell(J)}\big)^{-1}\Big),
\end{equation*}
where
\begin{equation*}
  \Psi(r):=\int_0^r\psi(t)\frac{\ud t}{t},
\end{equation*}
and $\Norm{K}{CZ_0}$ could be alternatively replaced by $\Norm{T}{L^2\to L^2}$.
\end{lemma}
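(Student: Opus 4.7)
\emph{Proof plan.} The strategy is to exploit the cancellation $\int h_I=0$ to symmetrize the kernel as $K(x,y)-K(x,y_I)$, and then to split the $x$-integration into a \emph{far} region, where the $\psi$-smoothness of $K$ applies, and a \emph{near} region, where it does not.

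Setting $\tilde h:=1_{J_I^c}(h_J-\ave{h_J}_{J_I})$, I would first write
\begin{equation*}
  \pair{\tilde h}{Th_I} = \iint_{J_I^c\times I} \tilde h(x)\big[K(x,y)-K(x,y_I)\big]h_I(y)\ud y\ud x,
\end{equation*}
where the replacement of $K(x,y)$ by $K(x,y)-K(x,y_I)$ relies on $\int h_I=0$ and is justified once absolute convergence is established. Then split the $x$-integration into the \emph{far} part $\{|x-y_I|>2\ell(I)\}$ and the \emph{near} part $\{|x-y_I|\leq 2\ell(I)\}\cap J_I^c$.

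On the far part, $|x-y_I|>2|y-y_I|$ for every $y\in I$, so the $\psi$-smoothness yields $|K(x,y)-K(x,y_I)|\lesssim\Norm{K}{CZ_\psi}|x-y_I|^{-d}\psi(\ell(I)/|x-y_I|)$. Combining with $|\tilde h|\leq 2|J|^{-1/2}$, $\Norm{h_I}{L^1}\leq|I|^{1/2}$, and a radial reduction of the $x$-integral (using that the $\ell^\infty$ sphere of radius $r$ has $(d{-}1)$-measure $\eqsim r^{d-1}$), one obtains $\lesssim \Norm{K}{CZ_\psi}(|I|/|J|)^{1/2}\Psi(\ell(I)/R)$, where $R$ is the inner radius of integration. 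In the main case $\ell(J)\geq 2^{r+1}\ell(I)$, the goodness of $I$ (implicit from $1_{\good}(I)$ in $\sigma_{\inside}$) applied to its ancestor $J_I$ gives $R\geq \dist(I,\partial J_I)\gtrsim \ell(J)\phi(\ell(I)/\ell(J))$, and the monotonicity of $\Psi$ delivers the claimed factor $\Psi\big(\tfrac{\ell(I)/\ell(J)}{\phi(\ell(I)/\ell(J))}\big)$. In the finitely many cases $\ell(J)<2^{r+1}\ell(I)$, simply taking $R=2\ell(I)$ yields $\Psi(1/2)$, which is again $\lesssim\Psi\big(\tfrac{\ell(I)/\ell(J)}{\phi(\ell(I)/\ell(J))}\big)$ since the argument is bounded below by a positive constant in this comparable-scale regime.

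The near part is empty in the main case ($I$ sits deep inside $J_I$, provided $r$ is large enough) and needs to be estimated only when $\ell(J)\eqsim\ell(I)$. There, Cauchy--Schwarz with $\Norm{1_{\near}\tilde h}{L^2}\lesssim(|I|/|J|)^{1/2}$ and $\Norm{Th_I}{L^2}\leq\Norm{T}{L^2\to L^2}$ yields a bound $\lesssim \Norm{T}{L^2\to L^2}(|I|/|J|)^{1/2}$, which is absorbed into the statement since $\Psi\big(\tfrac{\ell(I)/\ell(J)}{\phi(\ell(I)/\ell(J))}\big)\gtrsim 1$ in this range. To replace $\Norm{T}{L^2\to L^2}$ by $\Norm{K}{CZ_0}$ as in the stated inequality, one instead decomposes $\pair{\tilde h\cdot 1_{\near}}{Th_I}$ into $O(1)$ pairings of the form $\pair{1_Q}{T1_{Q'}}$ over cubes of size $\eqsim\ell(I)$ clustered around $I$, each controlled by a generalized weak-boundedness inequality obtainable from $\Norm{K}{CZ_0}$, $\Norm{K}{CZ_\psi}$ and $\Norm{T}{WBP}$. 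I expect the principal obstacle to be precisely this near part: the pointwise bound $|K|\lesssim|x-y|^{-d}$ alone is not integrable on the near region when $I$ abuts $\partial J_I$, so either the $L^2$ operator norm or a WBP-style cancellation must be brought in.
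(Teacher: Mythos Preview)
Your plan is essentially the paper's proof: the same far/near split, the same use of goodness to push the inner radius out to $\ell(J)\phi(\ell(I)/\ell(J))$ in the deep case, and the same observation that the near part only arises when $\ell(I)\eqsim\ell(J)$. Your far-part computation and the Cauchy--Schwarz alternative for the near part are exactly what the paper does.

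The one place where you over-complicate is the $\Norm{K}{CZ_0}$ version of the near estimate. You worry that $\abs{K(x,y)}\lesssim\abs{x-y}^{-d}$ is ``not integrable on the near region'' and therefore propose a decomposition into WBP-type pairings. This is unnecessary: the near region lies in $J_I^c\subseteq I^c$, so $x$ and $y$ are never equal, and although the \emph{single} integral in $x$ may diverge for $y\in\partial I$, the \emph{double} integral is finite. Concretely,
\[
  \int_{3I\setminus I}\int_I \frac{\ud y\,\ud x}{\abs{x-y}^d}\lesssim\abs{I},
\]
which is a standard annulus estimate (change variables $z=x-y$ and observe that for small $\abs{z}$ the set $\{y\in I:y+z\notin I\}$ has measure $\lesssim\abs{z}\,\ell(I)^{d-1}$). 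The paper simply bounds
\[
  \int_{3I\setminus I}\abs{Th_I(x)}\ud x
  \leq \Norm{K}{CZ_0}\,\Norm{h_I}{\infty}\int_{3I\setminus I}\int_I\frac{\ud y\,\ud x}{\abs{x-y}^d}
  \lesssim \Norm{K}{CZ_0}\abs{I}^{1/2},
\]
with no recourse to WBP. In particular, bringing $\Norm{T}{WBP}$ into this lemma would yield a weaker statement than claimed; the point is precisely that $\Norm{K}{CZ_0}$ alone suffices here.
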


\begin{proof}
\begin{equation*}
  \abs{ \pair{1_{J_I^c}(h_J-\ave{h_J}_{J_I})}{Th_I} }
  \leq 2\Norm{h_J}{\infty}\int_{J_I^c}\abs{Th_I(x)}\ud x,
\end{equation*}
where $\Norm{h_J}{\infty}=\abs{J}^{-1/2}$.

\subsubsection*{Case $\ell(I)\geq 2^{-r}\ell(J)$}
We have
\begin{equation*}
\begin{split}
  \int_{J_I^c}\abs{Th_I(x)}\ud x
  &\leq\int_{3I\setminus I}\Babs{\int K(x,y)h_I(y)\ud y}\ud x \\
  &\qquad+\int_{(3I)^c}\Babs{\int [K(x,y)-K(x,y_I)]h_I(y)\ud y}\ud x \\
  &\lesssim\Norm{K}{CZ_0}\int_{3I\setminus I}\int_I\frac{1}{\abs{x-y}^d}\ud y\ud x \Norm{h_I}{\infty} \\
  &\qquad+\Norm{K}{CZ_\psi}\int_{(3I)^c}\frac{1}{\dist(x,I)^d}\psi\Big(\frac{\ell(I)}{\dist(x,I)}\Big)\Norm{h_I}{1}\ud x \\
  &\lesssim\Norm{K}{CZ_0}\abs{I}\Norm{h_I}{\infty}+
    \Norm{K}{CZ_\psi}\int_{\ell(I)}^{\infty}\frac{1}{r^d}\psi\Big(\frac{\ell(I)}{r}\Big)r^{d-1}\ud r\Norm{h_I}{1} \\
  &=\Norm{K}{CZ_0}\abs{I}^{1/2}+\Norm{K}{CZ_\psi}\int_0^1\psi(t)\frac{\ud t}{t}\abs{I}^{1/2} \\
  &\lesssim\big(\Norm{K}{CZ_0}+\Norm{K}{CZ_\psi}\big)\abs{I}^{1/2}
\end{split}
\end{equation*}
by the Dini condition for $\psi$ in the last step.

Alternatively, the part giving the factor $\Norm{K}{CZ_0}$ could have been estimated by
\begin{equation*}
\begin{split}
  \int_{3I\setminus I}\Babs{\int K(x,y)h_I(y)\ud y}\ud x
  \leq\abs{3I\setminus I}^{1/2}\Norm{Th_I}{2}
  \lesssim\abs{I}^{1/2}\Norm{T}{L^2\to L^2}.
\end{split}
\end{equation*}

\subsubsection*{Case $\ell(I)< 2^{-r}\ell(J)$}
Since $I\subseteq J_I$ is good, we have
\begin{equation*}
  \dist(I,J_I^c)>\ell(J_I)\phi\Big(\frac{\ell(I)}{\ell(J_I)}\Big)\gtrsim\ell(J)\phi\Big(\frac{\ell(I)}{\ell(J)}\Big)
\end{equation*}
and hence
\begin{equation*}
\begin{split}
  \int_{J_I^c}\abs{Th_I(x)}\ud x
  &\lesssim\Norm{K}{CZ_\psi}\int_{J_I^c}\frac{1}{d(x,I)^d}\psi\Big(\frac{\ell(I)}{\dist(x,I)}\Big)\Norm{h_I}{1}\ud x \\
  &\lesssim\Norm{K}{CZ_\psi}\int_{\ell(J)\phi(\ell(I)/\ell(J))}\frac{1}{r^d}\psi\Big(\frac{\ell(I)}{r}\Big)r^{d-1}\ud r\cdot\Norm{h_I}{1}\\
  &=\Norm{K}{CZ_\psi}\int_0^{\ell(I)/\ell(J)\cdot\phi(\ell(I)/\ell(J))^{-1}}\psi(t)\frac{\ud t}{t}\cdot\abs{I}^{1/2}.\qedhere
\end{split}
\end{equation*}
\end{proof}

Now we can organize
\begin{equation*}
  \sigma_{\operatorname{in}}'
  :=\sum_J\sum_{I\subsetneq J}\pair{g}{h_J}\pair{1_{J_I^c}(h_J-\ave{h_J}_{J_I})}{Th_I}\pair{h_I}{f}
  =\sum_{i=1}^{\infty}\sum_J\sum_{\substack{I\subset J\\ \ell(I)=2^{-i}\ell(J)}},
\end{equation*}
and the inner sum is recognized as
\begin{equation*}
  \big(\Norm{K}{CZ_0}+\Norm{K}{CZ_\psi}\big)\Psi(2^{-i}\phi(2^{-i})^{-1})\pair{g}{A_J^{i0} f},
\end{equation*}
or with $\Norm{T}{L^2\to L^2}$ in place of $\Norm{K}{CZ_0}$,
for a cancellative averaging operator of type $(i,0)$.

On the other hand,
\begin{equation*}
\begin{split}
  \sigma_{\operatorname{in}}''
    &:=\sum_J\sum_{I\subsetneq J}\pair{g}{h_J}\ave{h_J}_{I}\pair{1}{Th_I}\pair{h_I}{f} \\
    &=\sum_I\Big\langle\sum_{J\supsetneq I}\pair{g}{h_J}h_J\Big\rangle_{I}\pair{1}{Th_I}\pair{h_I}{f} \\
    &=\sum_I\ave{g}_I\pair{T^*1}{h_I}\pair{h_I}{f}  \\
    &=\Bpair{\sum_I\ave{g}_I\pair{T^*1}{h_I}h_I}{f} =:\pair{\Pi_{T^*1}g}{f}
    =\pair{g}{\Pi_{T^*1}^* f}.
\end{split}
\end{equation*}
Here $\Pi_{T^*1}$ is the \emph{paraproduct}, a non-cancellative shift composed of the non-cancellative averaging operators
\begin{equation*}
  A_I g=\pair{T^*1}{h_I}\ave{g}_I h_I=\abs{I}^{-1/2}\pair{T^*1}{h_I}\cdot\pair{g}{h_I^0}h_I
\end{equation*}
of type $(0,0)$.

Summarizing, we have
\begin{equation*}
\begin{split}
  \sigma_{\operatorname{in}}
  &=\sigma_{\operatorname{in}}'+\sigma_{\operatorname{in}}'' \\
  &=\big(\Norm{K}{CZ_0}+\Norm{K}{CZ_\psi}\big)\sum_{i=1}^{\infty}\Psi(2^{-i}\phi(2^{-i})^{-1})\pair{g}{S^{i0}f}+\pair{\Pi_{T^*1}g}{f},
\end{split}
\end{equation*}
where $\Psi(t)=\int_0^t\psi(s)\ud s/s$, and $\Norm{K}{CZ_0}$ could be replaced by $\Norm{T}{L^2\to L^2}$. Note that if we wanted to write $\Pi_{T^*1}$ in terms of a shift with correct normalization, we should divide and multiply it by $\Norm{T^*1}{\BMO}$, thus getting a shift times the factor $\Norm{T^*1}{\BMO}\lesssim\Norm{T}{L^2}+\Norm{K}{CZ_\psi}$

\subsection{Near-by cubes, $\sigma_{=}$ and $\sigma_{\operatorname{near}}$}
We are left with the sums $\sigma_{=}$ of equal cubes $I=J$, as well as  $\sigma_{\operatorname{near}}$ of disjoint near-by cubes with $\dist(I,J)\leq\ell(J)\phi(\ell(I)/\ell(J))$.
Since $I$ is good, this necessarily implies that $\ell(I)>2^{-r}\ell(J)$. Then, for a given $J$, there are only boundedly many related $I$ in this sum.

\begin{lemma}
\begin{equation*}
  \abs{\pair{h_J}{Th_I}}\lesssim\Norm{K}{CZ_0}+\delta_{IJ}\Norm{T}{WBP}.
\end{equation*}
\end{lemma}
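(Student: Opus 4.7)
The hypotheses preceding the lemma fix a very rigid geometric setting: $I$ and $J$ are either equal or disjoint, and their sidelengths satisfy $\ell(I)\leq\ell(J)\leq 2^r\ell(I)$, the lower bound coming from the goodness of $I$. The strategy is to use the kernel bound as the main tool whenever $I$ and $J$ are disjoint, and to invoke the weak boundedness property exactly once, to cover the diagonal contribution when $I=J$.

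For the case $I\neq J$, both $I$ and $J$ are disjoint and of comparable size, so I would estimate directly via
\begin{equation*}
|\pair{h_J}{Th_I}|\leq |I|^{-1/2}|J|^{-1/2}\Norm{K}{CZ_0}\iint_{J\times I}\frac{\ud y\,\ud x}{|x-y|^d},
\end{equation*}
reducing the matter to controlling this double integral. For the case $I=J$, I would decompose $h_I$ (cancellative or not) as a linear combination $\sum_{Q\in\children(I)}c_Q 1_Q$ with $|c_Q|\lesssim|I|^{-1/2}$, yielding
\begin{equation*}
\pair{h_I}{Th_I}=\sum_{Q,Q'\in\children(I)} c_Q c_{Q'}\pair{1_Q}{T1_{Q'}};
\end{equation*}
the $2^d$ diagonal terms $Q=Q'$ contribute $\lesssim\Norm{T}{WBP}$ by the weak boundedness property, while the off-diagonal terms involve disjoint subcubes of $I$ and are absorbed into the same kernel-only estimate as in the $I\neq J$ case.

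The single piece of genuine analysis is the geometric integral estimate: for disjoint (possibly adjacent) cubes $A,B$ of comparable size $\sim\ell$, both contained in a ball of comparable radius,
\begin{equation*}
\iint_{A\times B}\frac{\ud y\,\ud x}{|x-y|^d}\lesssim_{r,d}\ell^d.
\end{equation*}
This is the only real obstacle, since touching cubes cause the kernel to be unbounded on a set of positive $(d{-}1)$-dimensional measure. I would establish it by reducing to the worst case of face-adjacent cubes, placing the shared face in a coordinate hyperplane and integrating first in the normal direction; the resulting inner integral produces a logarithmic singularity in the distance of $x$ to the face, which is locally integrable in the remaining $d-1$ variables. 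Plugging this bound back, the volume factors $|I|^{-1/2}|J|^{-1/2}\cdot\ell^d$ collapse to an $O_{r,d}(1)$ constant, yielding the claimed estimate $\Norm{K}{CZ_0}+\delta_{IJ}\Norm{T}{WBP}$, with the suppressed constant depending only on $r$ and $d$.
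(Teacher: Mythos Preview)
Your proposal is correct and follows essentially the same route as the paper: split into $I\neq J$ (disjoint, comparable size) and $I=J$, handle the former by the size bound on $K$ and the latter by decomposing into children, using WBP on the diagonal and the disjoint-cube kernel estimate off the diagonal. The only cosmetic difference is that the paper absorbs the disjoint integral into the standard estimate $\int_J\int_{3J\setminus J}\abs{x-y}^{-d}\ud y\ud x\lesssim\abs{J}$ (using $I\subseteq 3J\setminus J$) and quotes this as known, whereas you outline its proof via the face-adjacent reduction and the integrable logarithmic singularity; both establish the same bound.
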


Note that if we used the $L^2$-boundedness of $T$ instead of the $CZ_0$ and $WBP$ conditions (as is done in Theorem~\ref{thm:formula}, we could also estimate simply
\begin{equation*}
  \abs{\pair{h_J}{Th_I}}\leq\Norm{h_J}{2}\Norm{T}{L^2\to L^2}\Norm{h_I}{2}=\Norm{T}{L^2\to L^2}.
\end{equation*}

\begin{proof}
For disjoint cubes, we estimate directly
\begin{equation*}
\begin{split}
  \abs{\pair{h_J}{Th_I}}
  &\lesssim\Norm{K}{CZ_0}\int_J\int_I\frac{1}{\abs{x-y}^d}\ud y\ud x\Norm{h_J}{\infty}\Norm{h_I}{\infty} \\
  &\leq\Norm{K}{CZ_0}\int_J\int_{3J\setminus J}\frac{1}{\abs{x-y}^d}\ud y\ud x\abs{J}^{-1/2}\abs{I}^{-1/2} \\
   &\lesssim\Norm{K}{CZ_0}\abs{J}\abs{J}^{-1/2}\abs{J}^{-1/2}=\Norm{K}{CZ_0},
\end{split}
\end{equation*}
since $\abs{I}\eqsim\abs{J}$.

For $J=I$, let $I_i$ be its dyadic children. Then
\begin{equation*}
\begin{split}
  \abs{\pair{h_I}{Th_I}}
  &\leq\sum_{i,j=1}^{2^d}\abs{\ave{h_I}_{I_i}\ave{h_I}_{I_j}\pair{1_{I_i}}{T1_{I_j}}} \\
  &\lesssim\Norm{K}{CZ_0}\sum_{j\neq i}\abs{I}^{-1}\int_{I_i}\int_{I_j}\frac{1}{\abs{x-y}^d}\ud x\ud y
    +\sum_i\abs{I}^{-1}\abs{\pair{1_{I_i}}{T1_{I_i}}} \\
   &\lesssim\Norm{K}{CZ_0}+\Norm{T}{WBP},
\end{split}
\end{equation*}
by the same estimate as earlier for the first term, and the weak boundedness property for the second.
\end{proof}

With this lemma, the sum $\sigma_{=}$ is recognized as a cancellative dyadic shift of type $(0,0)$ as such:
\begin{equation*}
\begin{split}
  \sigma_{=}
  &=\sum_{I\in\mathscr{D}}1_{\good}(I)\cdot\pair{g}{h_I}\pair{h_I}{Th_I}\pair{h_I}{f} \\
  &=\big(\Norm{K}{CZ_0}+\Norm{T}{WBP}\big)\pair{g}{S^{00}f},
\end{split}
\end{equation*}
where the factor in front could also be replaced by $\Norm{T}{L^2\to L^2}$.

For $I$ and $J$ participating in $\sigma_{\operatorname{near}}$, we conclude from Lemma~\ref{lem:IveeJ} that $K:=I\vee J$ satisfies $\ell(K)\leq 2^r\ell(I)$, and hence we may organize
\begin{equation*}
  \sigma_{\operatorname{near}}
  =\sum_{i=1}^r\sum_{j=1}^i \sum_K\sum_{\substack{I,J:I\vee J=K\\ \dist(I,J)\leq\ell(J)\phi(\ell(I)/\ell(J))\\ \ell(I)=2^{-i}\ell(K) \\ \ell(J)=2^{-j}\ell(K)}},
\end{equation*}
and the innermost sum is recognized as $\Norm{K}{CZ_0}\pair{g}{A^{ij}_K f}$ for some cancellative averaging operator of type $(i,j)$.

Summarizing, we have
\begin{equation*}
  \sigma_{=}+\sigma_{\operatorname{near}}
  =\big(\Norm{K}{CZ_0}+\Norm{T}{WBP}\big)\pair{g}{S^{00}f}
    +\Norm{K}{CZ_0}\sum_{j=1}^r\sum_{i=j}^r \pair{g}{S^{ij}f},
\end{equation*}
where $S^{00}$ and $S^{ij}$ are cancellative dyadic shifts, and the factor $\big(\Norm{K}{CZ_0}+\Norm{T}{WBP}\big)$ could also be replaced by $\Norm{T}{L^2\to L^2}$.

\subsection{Synthesis}
We have checked that
\begin{equation*}
\begin{split}
  \sum_{\ell(I)\leq\ell(J)} &1_{\operatorname{good}}(I)\pair{g}{h_J}\pair{h_J}{Th_I}\pair{h_I}{f} \\
  &=\big(\Norm{K}{CZ_0}+\Norm{K}{CZ_\psi}\big)\Big(\sum_{1\leq j\leq i<\infty}\phi(2^{-i})^{-d}\psi(2^{-i}\phi(2^{-i})^{-1})\pair{g}{S^{ij}f} \\
  &\qquad\qquad+\sum_{1\leq i<\infty}\Psi(2^{-i}\phi(2^{-i})^{-1}))\pair{g}{S^{i0}f}\Big) \\
  &\qquad+\big(\Norm{K}{CZ_0}+\Norm{T}{WBP}\big)\pair{g}{S^{00}f}+\pair{g}{\Pi_{T^*1}^*f}
\end{split}
\end{equation*}
where $\Psi(t)=\int_0^t\psi(s)\ud s/s$, $\Pi_{T^*1}$ is a paraproduct---a non-cancellative shift of type $(0,0)$--, and all other $S^{ij}$ is a cancellative dyadic shifts of type $(i,j)$.

By symmetry (just observing that the cubes of equal size contributed precisely to the presence of the cancellative shifts of type $(i,i)$, and that the dual of a shift of type $(i,j)$ is a shift of type $(j,i)$), it follows that
\begin{equation*}
\begin{split}
  \sum_{\ell(I)>\ell(J)} &1_{\operatorname{good}}(J)\pair{g}{h_J}\pair{h_J}{Th_I}\pair{h_I}{f} \\
  &=\big(\Norm{K}{CZ_0}+\Norm{K}{CZ_\psi}\big)\Big(\sum_{1\leq i<j<\infty}\phi(2^{-j})^{-d}\psi(2^{-j}\phi(2^{-j})^{-1})\pair{g}{S^{ij}f} \\
  &\qquad+\sum_{1\leq j<\infty}\Psi(2^{-j}\phi(2^{-j})^{-1}))\pair{g}{S^{0j}f}\Big)+\pair{g}{\Pi_{T1}f}
\end{split}
\end{equation*}
so that altogether
\begin{equation*}
\begin{split}
  \sum_{I,J} &1_{\operatorname{good}}(\min\{I,J\}) \pair{g}{h_J}\pair{h_J}{Th_I}\pair{h_I}{f} \\
  &=\big(\Norm{K}{CZ_0}+\Norm{K}{CZ_\psi}\big)\Big(\sum_{i=1}^{\infty}\Psi(2^{-i}\phi(2^{-i})^{-1}))(\pair{g}{S^{i0}f}+\pair{g}{S^{0i}f}) \\
   &\qquad\qquad+\sum_{i,j=1}^{\infty}\phi(2^{-\max(i,j)})^{-d}\psi(2^{-\max(i,j)}\phi(2^{-\max(i,j)})^{-1})\pair{g}{S^{ij}f}\Big)\\
   &\qquad+\big(\Norm{K}{CZ_0}+\Norm{T}{WBP}\big)\pair{g}{S^{00}f}+\pair{g}{\Pi_{T1}f}+\pair{g}{\Pi_{T^*1}^*f},
\end{split}
\end{equation*}
and this completes the proof of Theorem~\ref{thm:formula}.

\section{Two-weight theory for dyadic shifts}

Before proceeding further, it is convenient to introduce a useful trick due to E.~Sawyer. Let $\sigma$ be an everywhere positive, finitely-valued function. Then $f\in L^p(w)$ if and only if $\phi=f/\sigma\in L^p(\sigma^p w)$, and they have equal norms in the respective spaces. Hence an inequality
\begin{equation}\label{eq:original}
  \Norm{Tf}{L^p(w)}\leq N\Norm{f}{L^p(w)}\qquad\forall f\in L^p(w)
\end{equation}
is equivalent to
\begin{equation*}
  \Norm{T(\phi\sigma)}{L^p(w)}\leq N\Norm{\phi\sigma}{L^p(w)}=N\Norm{\phi}{L^p(\sigma^p w)}\qquad\forall \phi\in L^p(\sigma^p w).
\end{equation*}
This is true for any $\sigma$, and we now choose it in such a way that $\sigma^p w=\sigma$, i.e., $\sigma=w^{-1/(p-1)}=w^{1-p'}$, where $p'$ is the dual exponent. So finally \eqref{eq:original} is equivalent to
\begin{equation*}
  \Norm{T(\phi\sigma)}{L^p(w)}\leq N\Norm{\phi}{L^p(\sigma)}\qquad\forall \phi\in L^p(\sigma).
\end{equation*}
This formulation has the advantage that the norm on the right and the operator
\begin{equation*}
  T(\phi\sigma)(x)=\int K(x,y)\phi(y)\cdot\sigma(y)\ud y
\end{equation*}
involve integration with respect to the same measure $\sigma$. In particular, the $A_2$ theorem is equivalent to
\begin{equation*}
  \Norm{T(f\sigma)}{L^2(w)}\leq c_T[w]_{A_2}\Norm{f}{L^2(\sigma)}
\end{equation*}
for all $f\in L^2(w)$, for all $w\in A_2$ and $\sigma=w^{-1}$. But once we know this, we can also study this two-weight inequality on its own right, for two general measures $w$ and $\sigma$, which need not be related by the pointwise relation $\sigma(x)=1/w(x)$. 

\begin{theorem}\label{thm:2weight}
Let $\sigma$ and $w$ be two locally finite measures with
\begin{equation*}
  [w,\sigma]_{A_2}:=\sup_Q\frac{w(Q)\sigma(Q)}{\abs{Q}^2}<\infty.
\end{equation*}
Then a dyadic shift $S$ of type $(i,j)$ satisfies $S(\sigma\cdot):L^2(\sigma)\to L^2(w)$ if and only if
\begin{equation*}
  \mathfrak{S}:=\sup_Q\frac{\Norm{1_Q S(\sigma 1_Q)}{L^2(w)}}{\sigma(Q)^{1/2}},\qquad
  \mathfrak{S}^*:=\sup_Q\frac{\Norm{1_Q S^*(w 1_Q)}{L^2(\sigma)}}{w(Q)^{1/2}}
\end{equation*}
are finite, and in this case
\begin{equation*}
  \Norm{S(\sigma\cdot)}{L^2(\sigma)\to L^2(w)}
  \lesssim(1+\kappa)(\mathfrak{S}+\mathfrak{S}^*)+(1+\kappa)^2[w,\sigma]_{A_2}^{1/2},
\end{equation*}
where $\kappa=\max\{i,j\}$.
\end{theorem}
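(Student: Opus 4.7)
Proof plan. Necessity is immediate: if $N=\Norm{S(\sigma\cdot)}{L^2(\sigma)\to L^2(w)}$, then for any cube $Q$,
\begin{equation*}
\Norm{1_Q S(\sigma 1_Q)}{L^2(w)}\leq\Norm{S(\sigma 1_Q)}{L^2(w)}\leq N\sigma(Q)^{1/2},
\end{equation*}
so $\mathfrak{S}\leq N$, and dually $\mathfrak{S}^*\leq N$. For sufficiency, my plan is a parallel \emph{corona / principal cubes} argument, in the spirit of Nazarov--Treil--Volberg and Lacey--Petermichl--Reguera. By duality and density, it is enough to bound the bilinear form
\begin{equation*}
\Lambda(f,g):=\int g\cdot S(f\sigma)\,w\,\ud x=\sum_{K\in\mathscr{D}}\int g\cdot A_K(f\sigma)\,w\,\ud x
\end{equation*}
for nonnegative $f\in L^2(\sigma)$ and $g\in L^2(w)$.

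First I would build two stopping families $\mathcal{F},\mathcal{G}\subset\mathscr{D}$ by the standard doubling-of-average construction: a dyadic descendant of $F\in\mathcal{F}$ enters $\mathcal{F}$ at the first scale where $\ave{f}_{\cdot,\sigma}>2\ave{f}_{F,\sigma}$, and analogously for $(g,w,\mathcal{G})$. Both enjoy the Carleson packing $\sum_{F\in\mathcal{F}}\ave{f}_{F,\sigma}^2\sigma(F)\lesssim\Norm{f}{L^2(\sigma)}^2$, and on each corona $\mathscr{C}_F:=\{Q\in\mathscr{D}:\pi_{\mathcal{F}}(Q)=F\}$ one has $\ave{f}_{Q,\sigma}\leq 2\ave{f}_{F,\sigma}$. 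I then regroup $\Lambda(f,g)$ by the pair $(F,G):=(\pi_{\mathcal{F}}(K),\pi_{\mathcal{G}}(K))$; by dyadic nesting $F\subseteq G$ or $G\subsetneq F$, and since $S^*$ is a shift of type $(j,i)$ it suffices to treat $F\subseteq G$. Approximating $f,g$ by their corona step functions splits $\Lambda$ into a \emph{principal} part (both step functions) and \emph{fluctuation} (martingale-difference) remainders. For fixed $F\subseteq G$, the principal term is controlled by localization and the testing condition:
\begin{equation*}
\Big|\sum_{K\in\mathscr{C}_F\cap\mathscr{C}_G}\int 1_G\, A_K(\sigma 1_F)\,w\,\ud x\Big|\leq w(G)^{1/2}\Norm{1_G S(\sigma 1_F)}{L^2(w)}\leq\mathfrak{S}\,w(G)^{1/2}\sigma(F)^{1/2},
\end{equation*}
and then summing over $F\subseteq G$ via Cauchy--Schwarz---using $[w,\sigma]_{A_2}^{1/2}$ to pair $\sigma(F)^{1/2}$ and $w(G)^{1/2}$ across scales and the two Carleson packings to tame the rest---yields the $(\mathfrak{S}+\mathfrak{S}^*)$ contribution (after combining with the symmetric $G\subsetneq F$ half handled via $\mathfrak{S}^*$). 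The factor $(1+\kappa)$ enters because the Haar data of $A_K$ live on cubes $I,J$ at depths $i,j$ below $K$, which may belong to subcoronas up to $\kappa=\max\{i,j\}$ corona levels refined below $F$ or $G$; aligning this discrepancy produces a $(1+\kappa)$-fold overcount.

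The main obstacle---and source of the $(1+\kappa)^2[w,\sigma]_{A_2}^{1/2}$ term---is the fluctuation contributions, which testing alone cannot see. Here I would use the pointwise bound $|A_K\phi|\leq 1_K\fint_K|\phi|$ to dominate each such remainder by $\sqrt{\sigma(K)w(K)}/|K|\leq[w,\sigma]_{A_2}^{1/2}$ times average-type data, and then apply a weighted dyadic Carleson embedding to each of $\mathcal{F},\mathcal{G}$ so that these averages sum to $\Norm{f}{L^2(\sigma)}\Norm{g}{L^2(w)}$; the extra factor $(1+\kappa)^2$ appears as one $(1+\kappa)$-loss per shift parameter when the martingale differences of $f\sigma$ and $gw$ are reorganized across the $i$, respectively $j$, intermediate scales between the corona head and the Haar cubes $I,J$. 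Collecting the principal and fluctuation estimates gives the bound claimed in the statement.
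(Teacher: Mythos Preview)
Your route is genuinely different from the paper's. The paper never builds stopping coronas for $f$ and $g$; instead it expands both functions in \emph{weighted martingale differences}, $f=\sum_Q\D_Q^{\sigma}f$ in $L^2(\sigma)$ and $g=\sum_R\D_R^{w}g$ in $L^2(w)$, and analyses the matrix $\pair{\D_R^{w}g}{S(\sigma\D_Q^{\sigma}f)}_{w}$ by the relative position of $Q$ and $R$. Disjoint $Q,R$ give the $(1+\kappa)^2[w,\sigma]_{A_2}^{1/2}$ term via the pointwise bound $|A_K\phi|\leq 1_K\fint_K|\phi|$ and Pythagoras; deeply nested $Q\subset R$ with $\ell(Q)<2^{-i}\ell(R)$ give $\mathfrak{S}^*$ through a Carleson embedding in which the Carleson packing condition \emph{is} the testing hypothesis; nested cubes of comparable size give $(1+\kappa)(\mathfrak{S}^*+[w,\sigma]_{A_2}^{1/2})$. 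So the orthogonality is built into the decomposition itself, and no stopping-time machinery is used.

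Your corona approach can in principle be made to work for well-localized operators, but the sketch has a concrete gap in the principal term. In the displayed chain
\begin{equation*}
\Big|\sum_{K\in\mathscr{C}_F\cap\mathscr{C}_G}\int 1_G\, A_K(\sigma 1_F)\,w\Big|\leq w(G)^{1/2}\Norm{1_G S(\sigma 1_F)}{L^2(w)}\leq\mathfrak{S}\,w(G)^{1/2}\sigma(F)^{1/2},
\end{equation*}
the first inequality replaces a \emph{subshift} by the full $S$, which is not a domination (cancellation can go either way), and the second applies testing to $1_G S(\sigma 1_F)$ with $F\subsetneq G$, whereas $\mathfrak{S}$ only bounds $1_Q S(\sigma 1_Q)$ with the \emph{same} cube; the piece $1_{G\setminus F}S(\sigma 1_F)$ is simply not controlled by $\mathfrak{S}$. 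Even granting that bound per pair, ``summing over $F\subseteq G$ via Cauchy--Schwarz using $[w,\sigma]_{A_2}^{1/2}$'' is not a recognised step: in parallel-corona proofs the off-diagonal pairs are handled by quasi-orthogonality of the corona pieces, not by bare $A_2$. Your attribution of the constants is also inverted relative to what actually happens in the paper: there the quadratic $(1+\kappa)^2$ factor appears on the pure-$A_2$ (disjoint-cubes) term, while the testing terms pick up only $(1+\kappa)$.
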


This result from my work with P\'erez, Treil, and Volberg \cite{HPTV} was preceded by an analogous qualitative version due to Nazarov, Treil, and Volberg \cite{NTV:2weightHaar}.

The proof depends on decomposing functions in the spaces $L^2(w)$ and $L^2(\sigma)$ in terms of expansions similar to the Haar expansion in $L^2(\R^d)$. Let $\D^{\sigma}_I$ be the orthogonal projection of $L^2(\sigma)$ onto its subspace of functions supported on $I$, constant on the subcubes of $I$, and with vanishing integral with respect to $\ud\sigma$. Then any two $\D_I^{\sigma}$ are orthogonal to each other. Under the additional assumption that the $\sigma$ measure of quadrants of $\R^d$ is infinite, we have the expansion
\begin{equation*}
  f=\sum_{Q\in\mathscr{D}}\D_Q^{\sigma}f
\end{equation*}
for all $f\in L^2(\sigma)$, and Pythagoras' theorem says that
\begin{equation*}
  \Norm{f}{L^2(\sigma)}=\Big(\sum_{Q\in\mathscr{D}}\Norm{\D_Q^{\sigma}f}{L^2(\sigma)}^2\Big)^{1/2}.
\end{equation*}
(These formulae needs a slight adjustment if the $\sigma$ measure of quadrants is finite; Theorem~\ref{thm:2weight} remains true without this extra assumption.) Let us also write
\begin{equation*}
  \D^{\sigma,i}_K:=\sum_{\substack{I\subseteq K\\ \ell(I)=2^{-i}\ell(K)}}\D^{\sigma}_I.
\end{equation*}
For a fixed $i\in\N$, these are also orthogonal to each other, and the above formulae generalize to
\begin{equation*}
  f=\sum_{Q\in\mathscr{D}}\D_Q^{\sigma,i}f,\qquad\Norm{f}{L^2(\sigma)}=\Big(\sum_{Q\in\mathscr{D}}\Norm{\D_Q^{\sigma,i}f}{L^2(\sigma)}^2\Big)^{1/2}.
\end{equation*}

The proof is in fact very similar in spirit to that of Theorem~\ref{thm:formula}; it is another $T1$ argument, but now with respect to the measures $\sigma$ and $w$ in place of the Lebesgue measure. We hence expand
\begin{equation*}
  \pair{g}{S(\sigma f)}_w
  =\sum_{Q,R\in\mathscr{D}}\pair{\D_R^w g}{S(\sigma\D_Q^{\sigma}f)}_w,\qquad f\in L^2(\sigma),\ g\in L^2(w),
\end{equation*}
and estimate the matrix coefficients
\begin{equation}\label{eq:2weightMatrix}
\begin{split}
   \pair{\D_R^{w}g}{S(\sigma \D^{\sigma}_Q f)}_{w}
   &=\sum_K\pair{\D^w_R g}{A_K(\sigma\D^{\sigma}_Q f)}_w \\
   &=\sum_K\sum_{I,J\subseteq K}a_{IJK}\pair{\D^w_R g}{h_J}_w\pair{h_I}{\D^{\sigma}_Q f}_{\sigma}.
\end{split}
\end{equation}
For $\pair{h_I}{\D^{\sigma}_Q f}_{\sigma}\neq 0$, there must hold $I\cap Q\neq\varnothing$, thus $I\subseteq Q$ or $Q\subsetneq I$. But in the latter case $h_I$ is constant on $Q$, while $\int\D^{\sigma}_Q f\cdot\sigma=0$, so the pairing vanishes even in this case. Thus the only nonzero contributions come from $I\subseteq Q$, and similarly from $J\subseteq R$. Since $I,J\subseteq K$, there holds
\begin{equation*}
  \big(I\subseteq Q\subsetneq K\quad\text{or}\quad K\subseteq Q\big)\qquad\text{and}\qquad\big(J\subseteq R\subsetneq K\quad\text{or}\quad K\subseteq R\big).
\end{equation*}

\subsection{Disjoint cubes}
Suppose now that $Q\cap R=\varnothing$, and let $K$ be among those cubes for which $A_K$ gives a nontrivial contribution above. Then it cannot be that $K\subseteq Q$, since this would imply that $Q\cap R\supseteq K\cap J=J\neq\varnothing$, and similarly it cannot be that $K\subseteq R$.  Thus $Q,R\subsetneq K$, and hence
\begin{equation*}
  Q\vee R\subseteq K.
\end{equation*}
Then
\begin{align*}
   \abs{\pair{\D_R^w g}{S(\sigma\D^{\sigma}_Q f)}_w}
   &\leq \sum_{K\supseteq Q\vee R}\abs{\pair{\D^w_R g}{A_K(\sigma\D^{\sigma}_Q f)}_{w}} \\
   &\lesssim \sum_{K\supseteq Q\vee R} \frac{\Norm{\D^w_R g}{L^1(w)} \Norm{\D^{\sigma}_Q f}{L^1(\sigma)}}{\abs{K}} \\
   &\lesssim \frac{\Norm{\D^w_R g}{L^1(w)} \Norm{\D^{\sigma}_Q f}{L^1(\sigma)}}{\abs{Q\vee R}}
\end{align*}
On the other hand, we have $Q\supseteq I$, $R\supseteq J$ for some $I,J\subseteq K$ with $\ell(I)=2^{-i}\ell(K)$ and $\ell(J)=2^{-j}\ell(K)$. Hence $2^{-i}\ell(K)\leq\ell(Q)$ and $2^{-j}\ell(K)\leq\ell(R)$, and thus
\begin{equation*}
  Q\vee R\subseteq K\subseteq Q^{(i)}\cap R^{(j)}.
\end{equation*}

Now it is possible to estimate the total contribution of the part of the matrix with $Q\cap R=\varnothing$. Let $P:=Q\vee R$ be a new auxiliary summation variable. Then $Q,R\subset P$, and $\ell(Q)=2^{-a}\ell(P)$, $\ell(R)=2^{-b}\ell(P)$ where $a=1,\ldots,i$, $b=1,\ldots,j$.
Thus
\begin{align*}
  \sum_{\substack{Q,R\in\mathscr{D}\\ Q\cap R=\varnothing}} &\abs{\pair{\D_R^{w}g}{S(\sigma\D^{\sigma}_Q f)}_{w}} \\
  &\lesssim\sum_{a=1}^i\sum_{b=1}^j\sum_{P\in\mathscr{D}}\frac{1}{\abs{P}}
  \sum_{\substack{Q,R\in\mathscr{D}:Q\vee R=P\\ \ell(Q)=2^{-a}\ell(P)\\ \ell(R)=2^{-b}\ell(P)}}\Norm{\D^w_R g}{L^1(\sigma)} \Norm{\D^{\sigma}_Q f}{L^1(w)} \\
  &\leq\sum_{a,b=1}^{i,j}\sum_{P\in\mathscr{D}}\frac{1}{\abs{P}}\sum_{\substack{R\subseteq P\\ \ell(R)=2^{-b}\ell(P)}}\Norm{\D^w_R g}{L^1(\sigma)}
        \sum_{\substack{Q\subseteq P\\ \ell(Q)=2^{-a}\ell(P)}}\Norm{\D^{\sigma}_{Q}f}{L^1(\sigma)} \\
  &=\sum_{a,b=1}^{i,j}\sum_{P\in\mathscr{D}}\frac{1}{\abs{P}}\BNorm{\sum_{\substack{R\subseteq P\\ \ell(R)=2^{-b}\ell(P)}}\D^w_R g}{L^1(\sigma)}
        \BNorm{\sum_{\substack{Q\subseteq P\\ \ell(Q)=2^{-a}\ell(P)}}\D^{\sigma}_{Q}f}{L^1(\sigma)}\\
   &\qquad\qquad\qquad\text{(by disjoint supports)} \\
  &=\sum_{a,b=1}^{i,j}\sum_{P\in\mathscr{D}}\frac{1}{\abs{P}}\Norm{\D^{w,j}_{P}g}{L^1(w)}\Norm{\D^{\sigma,i}_{P}f}{L^1(\sigma)} \\
  &\leq\sum_{a,b=1}^{i,j}\sum_{P\in\mathscr{D}}\frac{\sigma(P)^{1/2}w(P)^{1/2}}{\abs{P}}
     \Norm{\D^{w,j}_{P}g}{L^2(w)}\Norm{\D^{\sigma,i}_{P}f}{L^2(\sigma)}\\
  &\leq\sum_{a,b=1}^{i,j}[w,\sigma]_{A_2}^{1/2}\Big(\sum_{P\in\mathscr{D}}\Norm{\D^{w,j}_{P}g}{L^2(w)}^2\Big)^{1/2}
     \Big(\sum_{P\in\mathscr{D}}\Norm{\D^{\sigma,i}_{P}f}{L^2(\sigma)}^2\Big)^{1/2} \\
  &\leq ij[w,\sigma]_{A_2}^{1/2}\Norm{g}{L^2(w)}\Norm{f}{L^2(\sigma)}.
\end{align*}

\subsection{Deeply contained cubes}

Consider now the part of the sum with $Q\subset R$ and $\ell(Q)<2^{-i}\ell(R)$. (The part with $R\subset Q$ and $\ell(R)<2^{-j}\ell(Q)$ would be handled in a symmetrical manner.)

\begin{lemma}\label{lem:contCubesAlgebra}
For all $Q\subset R$ with $\ell(Q)<2^{-i}\ell(R)$, we have
\begin{equation*}
    \pair{\D^w_R g}{S(\sigma\D^{\sigma}_Q f)}_{w}
  =\ave{\D^w_R g}_{Q^{(i)}} \pair{S^*(w1_{Q^{(i)}})}{\D^{\sigma}_Q f}_{\sigma},
\end{equation*}
where further
\begin{equation*}
  \D^{\sigma}_Q S^*(w1_{Q^{(i)}})
  =\D^{\sigma}_Q S^*(w1_{P})\qquad\text{for any }P\supseteq Q^{(i)}.
\end{equation*}
\end{lemma}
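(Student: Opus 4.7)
The strategy rests on a single structural observation: the image $S(\sigma\D^\sigma_Q f)$ is supported in the dyadic ancestor $Q^{(i)}$. I will prove this first, and then both assertions of the lemma essentially fall out by inspection. Granting the observation, note that $\ell(Q^{(i)})=2^i\ell(Q)<\ell(R)$, so $Q^{(i)}$ is strictly contained in $R$ and therefore lies inside one of its dyadic children; by the defining property of $\D^w_R$, the function $\D^w_R g$ is constant on that child, hence constant on $Q^{(i)}$ with value $\ave{\D^w_R g}_{Q^{(i)}}$. Decomposing $\D^w_R g=\ave{\D^w_R g}_{Q^{(i)}}\,1_{Q^{(i)}}+1_{(Q^{(i)})^c}\D^w_R g$ and pairing against $S(\sigma\D^\sigma_Q f)$ in $L^2(w)$ kills the second summand (by the support claim), while straightforward duality
\begin{equation*}
   \pair{1_{Q^{(i)}}}{S(\sigma\D^\sigma_Q f)}_w=\pair{S^*(w 1_{Q^{(i)}})}{\D^\sigma_Q f}_\sigma
\end{equation*}
converts the first summand into the right-hand side of the lemma.

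To verify the support claim, expand, as in \eqref{eq:2weightMatrix},
\begin{equation*}
   S(\sigma\D^\sigma_Q f)=\sum_{K\in\mathscr{D}}\sum_{\substack{I,J\subseteq K\\ \ell(I)=2^{-i}\ell(K)\\ \ell(J)=2^{-j}\ell(K)}}a_{IJK}\,\pair{h_I}{\D^\sigma_Q f}_\sigma\,h_J.
\end{equation*}
The vanishing argument supplied just after \eqref{eq:2weightMatrix} shows that a surviving $I$ must lie in $Q$, so $\ell(K)=2^i\ell(I)\leq 2^i\ell(Q)=\ell(Q^{(i)})$; because $I$ is a common dyadic subcube of $K$ and $Q$, dyadic nestedness combined with this size bound forces $K\subseteq Q^{(i)}$. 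Consequently $J\subseteq K\subseteq Q^{(i)}$, so every surviving $h_J$---and hence the whole series---is supported in $Q^{(i)}$.

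The second identity now follows from the same support fact by duality: for any $P\supseteq Q^{(i)}$ and any $f\in L^2(\sigma)$,
\begin{equation*}
   \pair{\D^\sigma_Q f}{S^*(w 1_{P\setminus Q^{(i)}})}_\sigma=\pair{1_{P\setminus Q^{(i)}}}{S(\sigma\D^\sigma_Q f)}_w=\int_{P\setminus Q^{(i)}}S(\sigma\D^\sigma_Q f)\,\ud w=0,
\end{equation*}
since $S(\sigma\D^\sigma_Q f)$ vanishes off $Q^{(i)}$. Thus $\D^\sigma_Q S^*(w 1_{P\setminus Q^{(i)}})=0$, and linearity yields $\D^\sigma_Q S^*(w 1_P)=\D^\sigma_Q S^*(w 1_{Q^{(i)}})$. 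The only genuine point to watch is in the support argument, where the rigid shift constraint $\ell(K)=2^i\ell(I)$ must be combined with the cancellation of $\D^\sigma_Q f$ against $h_I$ for $I\not\subseteq Q$ in order to exclude ancestors of $Q^{(i)}$ from the sum; everything else is bookkeeping.
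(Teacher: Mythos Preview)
Your proof is correct and follows essentially the same approach as the paper's. The only difference is organizational: the paper works component by component with each $A_K$ and then sums, whereas you package the key observation as a single support claim for the full operator $S(\sigma\D^\sigma_Q f)\subseteq Q^{(i)}$ and argue globally. The underlying mathematical content---that surviving $I$ must lie in $Q$, forcing $K=I^{(i)}\subseteq Q^{(i)}$, and that $\D^w_R g$ is therefore constant on the support---is identical.
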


Recall that $\D^{\sigma}_Q=(\D^{\sigma}_Q)^2=(\D^{\sigma}_Q)^*$ is an orthogonal projection on $L^2(\sigma)$, so that it can be moved to either or both sides of the pairing $\pair{\ }{\ }_{\sigma}$.

\begin{proof}
Recall formula~\eqref{eq:2weightMatrix}.
If $\pair{h_I}{\D_Q^{\sigma} f}_{\sigma}$ is nonzero, then $I\subseteq Q$, and hence
\begin{equation*}
  J\subseteq K=I^{(i)}\subseteq Q^{(i)}\subsetneq R
\end{equation*}
for all $J$ participating in the same $A_K$ as $I$. Thus $\D^w_R g$ is constant on $Q^{(i)}$, hence
\begin{equation*}
\begin{split}
  \pair{\D^w_R g}{A_K(\sigma\D^{\sigma}_Q f)}_{w}
  &=\pair{1_{Q^{(i)}}\D^w_R g}{A_K(\sigma\D^{\sigma}_Q f)}_{w} \\
   &=\ave{\D^w_R g}_{Q^{(i)}}^w \pair{1_{Q^{(i)}}}{A_K(\sigma\D^{\sigma}_Q f)}_{w} \\
   &=\ave{\D^w_R g}_{Q^{(i)}}^w \pair{A_K^*(w1_{Q^{(i)}})}{\D^{\sigma}_Q f}_{\sigma}.
\end{split}
\end{equation*}

Moreover, for any $P\supseteq Q^{(i)}\supseteq K$,
\begin{equation*}
\begin{split}
   \pair{\D^{\sigma}_Q A_K^*(w1_{Q^{(i)}})}{f}_{\sigma}
  &=\pair{1_{Q^{(i)}}}{A_K(\sigma\D^{\sigma}_Q f)}_{w} \\
  &=\int A_K(\sigma\D^{\sigma}_Q f)w \\
  &=\pair{1_P}{A_K(\sigma\D^{\sigma}_Q f)}_w
  =\pair{\D^{\sigma}_QA_K^*(w1_P)}{ f}_{\sigma}.
\end{split}
\end{equation*}
Summing these equalities over all relevant $K$, and using $S=\sum_K A_K$, gives the claim.
\end{proof}

By the lemma, we can then manipulate
\begin{align*}
  \sum_{\substack{Q,R:Q\subset R\\ \ell(Q)<2^{-i}\ell(R)}} &\pair{\D^w_R g}{S(\sigma\D^{\sigma}_Q f)}_{w} \\
  &=\sum_Q\Big(\sum_{R\supsetneq Q^{(i)}}\ave{\D^w_R g}_{Q^{(i)}}^w\Big)\pair{S^*(w1_{Q^{(i)}})}{\D^{\sigma}_Q f}_{\sigma} \\
  &=\sum_Q\ave{g}^{w}_{Q^{(i)}}\pair{S^*(w1_{Q^{(i)}})}{\D^{\sigma}_Q f}_{\sigma} \\
  &=\sum_R\ave{g}^w_R\Bpair{S^*(w1_R)}{\sum_{\substack{Q\subseteq R \\ \ell(Q)=2^{-i}\ell(R)}}\D^{\sigma}_Q f}_{\sigma} \\
  &=\sum_R\ave{g}^w_R\Bpair{S^*(w 1_R)}{\D^{\sigma,i}_R f}_{\sigma},
\end{align*}
where $\ave{g}^w_R:=w(R)^{-1}\int_R g\cdot w$ is the average of $g$ on $R$ with respect to the $w$ measure.

By using the properties of the pairwise orthogonal projections $\D^{\sigma,i}_R$ on $L^2(\sigma)$, 
the above series may be estimated as follows:
\begin{align*}
  &\Babs{\sum_{\substack{Q,R:Q\subset R\\ \ell(Q)<2^{-i}\ell(R)}} \pair{\D^w_R g}{S(\sigma\D^{\sigma}_Q f)}_{w}} \\
  &\leq\sum_R\abs{\ave{g}^w_R}\Norm{\D^{\sigma,i}_R S^*(w1_R)}{L^2(\sigma)}\Norm{\D^{\sigma,i}_R f}{L^2(\sigma)} \\
  &\leq\Big(\sum_R\abs{\ave{g}^w_R}^2\Norm{\D^{\sigma,i}_R S^*(w1_R)}{L^2(\sigma)}^2\Big)^{1/2}
     \Big(\sum_R\Norm{\D^{\sigma,i}_R f}{L^2(\sigma)}^2\Big)^{1/2},
\end{align*}
where the last factor is equal to $\Norm{f}{L^2(w)}$.

The first factor on the right is handled by the dyadic Carleson embedding theorem: It follows from the second equality of Lemma~\ref{lem:contCubesAlgebra}, namely $\D^{\sigma}_Q S^*(w1_Q^{(i)})=\D^{\sigma}_Q S^*(w1_P)$ for all $P\supseteq Q^{(i)}$, that $\D^{\sigma,i}_R S^*(w1_R)=\D^{\sigma}_Q S^*(w1_P)$ for all $P\subseteq R$. Hence, we have
\begin{equation*}
\begin{split}
  \sum_{R\subseteq P}\Norm{\D^{\sigma,i}_R S^*(w 1_R)}{L^2(\sigma)}^2
  &=\sum_{R\subseteq P}\Norm{\D^{\sigma,i}_R(1_P S^*(w 1_P))}{L^2(\sigma)}^2 \\
  &\leq\Norm{1_P S^*(w 1_P)}{L^2(\sigma)}^2\lesssim \mathfrak{S}_*^2\sigma(P)
\end{split}
\end{equation*}
by the (dual) testing estimate for the dyadic shifts. By the Carleson embedding theorem, it then follows that
\begin{equation*}
   \Big(\sum_R\abs{\ave{g}^w_R}^2 \Norm{\D^{\sigma,i}_R S^*(w 1_R)}{L^2(\sigma)}^2\Big)^{1/2}
   \lesssim \mathfrak{S}_*\Norm{g}{L^2(\sigma)},
\end{equation*}
and the estimation of the deeply contained cubes is finished.

\subsection{Contained cubes of comparable size}

It remains to estimate
\begin{equation*}
  \sum_{\substack{Q,R:Q\subseteq R\\ \ell(Q)\geq 2^{-i}\ell(R)}}\pair{\D^w_R g}{S(\sigma\D^{\sigma}_Q f)}_{w};
\end{equation*}
the sum over $R\subsetneq Q$ with $\ell(R)\geq 2^{-j}\ell(Q)$ would be handled in a symmetric manner. The sum of interest may be written as
\begin{equation*}
  \sum_{a=0}^i\sum_R\sum_{\substack{Q\subseteq R\\ \ell(Q)=2^{-a}\ell(R)}}\pair{\D^w_R g}{S(\sigma\D^{\sigma}_Q f)}_{w}
  =\sum_{a=0}^i\sum_R\pair{\D^w_R g}{S(\sigma\D^{\sigma,i}_R f)}_{w},
\end{equation*}
and
\begin{equation*}
  \pair{\D^w_R g}{S(\sigma\D^{\sigma,i}_R f)}_{w}
  =\sum_{k=1}^{2^d}\ave{\D^w_R g}_{R_k}\pair{S^*(w 1_{R_k})}{\D^{\sigma,i}_R f}_{\sigma}
\end{equation*}
where the $R_k$ are the $2^d$ dyadic children of $R$, and $\ave{\D^w_R g}_{R_k}$ is the constant valued of $\D^w_R g$ on $R_k$. Now
\begin{equation*}
  \pair{S^*(w 1_{R_k})}{\D^{\sigma,i}_R f}_{\sigma}
  =\pair{1_{R_k}S^*(w 1_{R_k})}{\D^{\sigma,i}_R f}_{\sigma}+\pair{S^*(w 1_{R_k})}{1_{R_k^c}\D^{\sigma,i}_R f}_{\sigma},
\end{equation*}
where
\begin{equation*}
  \abs{\pair{1_{R_k}S^*(w 1_{R_k})}{\D^{\sigma,i}_R f}_{\sigma}}
  \leq\mathfrak{S}_* w(R_k)^{1/2}\Norm{\D^{\sigma,i}_R f}{L^2(\sigma)}
\end{equation*}
and, observing that only those $A_K^*$ where $K$ intersects both $R_k$ and $R_k^c$ contribute to the second part,
\begin{equation*}
\begin{split}
  \abs{\pair{S^*(w 1_{R_k})}{1_{R_k^c}\D^{\sigma,i}_R f}_{\sigma}}
  &=\Babs{\sum_{K\supsetneq R_k}\pair{A_K^*(w 1_{R_k})}{1_{R_k^c}\D^{\sigma,i}_R f}_{\sigma}} \\
  &\lesssim\sum_{K\supseteq R}\frac{1}{\abs{K}}w(R_k)\Norm{\D^{\sigma,i}_R f}{L^1(\sigma)} \\
  &\lesssim\frac{1}{\abs{R}}w(R_k)\sigma(R)^{1/2}\Norm{\D^{\sigma,i}_R f}{L^1(\sigma)} \\
  &\leq\frac{w(R)^{1/2}\sigma(R)^{1/2}}{\abs{R}}w(R_k)^{1/2}\Norm{\D^{\sigma,i}_R f}{L^2(\sigma)} \\
  &\leq[w,\sigma]_{A_2}w(R_k)^{1/2}\Norm{\D^{\sigma,i}_R f}{L^2(\sigma)}.
\end{split}
\end{equation*}
It follows that
\begin{equation*}
  \abs{\pair{S^*(w 1_{R_k})}{\D^{\sigma,i}_R f}_{\sigma}}
  \lesssim(\mathfrak{S}_* +[w,\sigma]_{A_2})w(R_k)^{1/2}\Norm{\D^{\sigma,i}_R f}{L^2(\sigma)}
\end{equation*}
and hence
\begin{equation*}
  \abs{\pair{\D_R^w g}{S(\sigma\D^{\sigma,i}_R f)}_{w}}
  \lesssim(\mathfrak{S}_* +[w,\sigma]_{A_2})\Norm{\D^w_R g}{L^2(w)}\Norm{\D^{\sigma,i}_R f}{L^2(\sigma)}
 \end{equation*}
Finally,
\begin{align*}
  &\sum_{a=0}^i\sum_R \abs{\pair{\D_R^w g}{S(\sigma\D^{\sigma,i}_R f)}_{w}} \\
  &\lesssim  (\mathfrak{S}_* +[w,\sigma]_{A_2})\sum_{a=0}^i\Big(\sum_R\Norm{\D^w_R g}{L^2(\sigma)}^2\Big)^{1/2}
       \Big(\sum_R\Norm{\D^{\sigma,i}_R f}{L^2(\sigma)}^2\Big)^{1/2} \\
  &\leq(1+i)(\mathfrak{S}_* +[w,\sigma]_{A_2})\Norm{g}{L^2(w)}\Norm{f}{L^2(\sigma)}.
\end{align*}
The symmetric case with $R\subset Q$ with $\ell(R)\geq 2^{-j}\ell(Q)$ similarly yields the factor $(1+j)(\mathfrak{S} +[w,\sigma]_{A_2})$. This completes the proof of Theorem~\ref{thm:2weight}.

\section{Final decompositions: verification of the testing conditions}

We now turn to the estimation of the testing constant
\begin{equation*}
  \mathfrak{S}:=\sup_{Q\in\mathscr{D}}\frac{\Norm{1_Q S(\sigma 1_Q)}{L^2(w)}}{\sigma(Q)^{1/2}}.
\end{equation*}
Bounding $\mathfrak{S}_*$ is analogous by exchanging the roles of $w$ and $\sigma$. 

\subsection{Several splittings}
First observe that
\begin{equation*}
  1_Q S(\sigma 1_Q)
  =1_Q\sum_{K:K\cap Q\neq\varnothing}A_K(\sigma 1_Q)
  =1_Q\sum_{K\subseteq Q}A_K(\sigma 1_Q)+1_Q\sum_{K\supsetneq Q}A_K(\sigma 1_Q).
\end{equation*}
The second part is immediate to estimate even pointwise by
\begin{equation*}
   \abs{1_Q A_K(\sigma 1_Q)}\leq 1_Q\frac{\sigma(Q)}{\abs{K}},\qquad\sum_{K\supsetneq Q}\frac{1}{\abs{K}}\leq\frac{1}{\abs{Q}},
\end{equation*}
and hence its $L^2(w)$ norm is bounded by
\begin{equation*}
   \BNorm{1_Q\frac{\sigma(Q)}{\abs{Q}}}{L^2(w)}
   =\frac{w(Q)^{1/2}\sigma(Q)}{\abs{Q}}\leq[w,\sigma]_{A_2}\sigma(Q)^{1/2}.
\end{equation*}
So it remains to concentrate on $K\subseteq Q$, and we perform several consecutive splittings of this collection of cubes. First, we \textbf{separate scales} by introducing the splitting according to the $\kappa+1$ possible values of $\log_2\ell(K)\mod(\kappa+1)$. We denote a generic choice of such a collection by
\begin{equation*}
  \mathscr{K}=\mathscr{K}_k:=\{K\subseteq Q:\log_2\ell(K)\equiv k\mod(\kappa+1)\},
\end{equation*}
where $k$ is arbitrary but fixed. (We will drop the subscript $k$, since its value plays no role in the subsequent argument.) Next, we \textbf{freeze the $A_2$ characteristic} by setting
\begin{equation*}
  \mathscr{K}^a:=\Big\{K\in\mathscr{K}: 2^{a-1}<\frac{w(K)\sigma(K)}{\abs{K}}\leq 2^a\Big\},\qquad a\in\Z,\quad a\leq\ceil{\log_2[w,\sigma]_{A_2}},
\end{equation*}
where $\ceil{\ }$ means rounding up to the next integer.

In the next step, we \textbf{choose the principal cubes} $P\in\mathscr{P}^a\subseteq\mathscr{K}^a$.
This construction was first introduced by B. Muckenhoupt and R. Wheeden \cite{MW:77}, and it has been influential ever since.
Let $\mathscr{P}^a_0$ consist of all maximal cubes in $\mathscr{K}^a$, and inductively $\mathscr{P}^a_{p+1}$ consist of all maximal $P'\in\mathscr{K}^a$ such that
\begin{equation*}
  P'\subset P\in\mathscr{P}^a_p,\qquad \frac{\sigma(P')}{\abs{P'}}>2\frac{\sigma(P)}{\abs{P}}.
\end{equation*}
Finally, let $\mathscr{P}^a:=\bigcup_{p=0}^{\infty}\mathscr{P}^a_p$.
For each $K\in\mathscr{K}^a$, let $\Pi^a(K)$ denote the minimal $P\in\mathscr{P}^a$ such that $K\subseteq P$. Then we set
\begin{equation*}
  \mathscr{K}^a(P):=\{K\in\mathscr{K}^a:\Pi^a(K)=P\},\qquad P\in\mathscr{P}^a.
\end{equation*}

Note that $\sigma(K)/\abs{K}\leq 2\sigma(P)/\abs{P}$ for all $K\in\mathscr{K}^a(P)$, which allows us to \textbf{freeze the $\sigma$-to-Lebesgue measure ratio} by the final subcollections
\begin{equation*}
  \mathscr{K}^a_b(P):=\Big\{K\in\mathscr{K}^a(P): 2^{-b}<\frac{\sigma(K)}{\abs{K}}\frac{\abs{P}}{\sigma(P)}\leq 2^{1-b}\Big\},\qquad b\in\N.
\end{equation*}
We have
\begin{equation*}
\begin{split}
  &\{K\in\mathscr{D}:K\subseteq Q\}
  =\bigcup_{k=0}^{\kappa}\mathscr{K}_k,\qquad
  \mathscr{K}_k=\mathscr{K}=\bigcup_{a\leq\ceil{\log_2[w,\sigma]_{A_2}}}\mathscr{K}^a,\\
  &\mathscr{K}^a=\bigcup_{P\in\mathscr{P}^a}\mathscr{K}^a(P),\qquad
  \mathscr{K}^a(P)=\bigcup_{b=0}^{\infty}\mathscr{K}^a_b(P),\qquad
\end{split}
\end{equation*}
where all unions are disjoint. Note that we drop the reference to the separation-of-scales parameter $k$, since this plays no role in the forthcoming arguments. Recalling the notation for subshifts $S_{\mathscr{Q}}=\sum_{K\in\mathscr{Q}}A_K$, this splitting of collections of cubes leads to the splitting of the function
\begin{equation*}
  \sum_{K\subseteq Q}A_K(\sigma 1_Q)=\sum_{k=0}^{\kappa}\sum_{a\leq\ceil{\log_2[w,\sigma]_{A_2}}}\sum_{P\in\mathscr{P}^a}\sum_{b=0}^{\infty}S_{\mathscr{K}^a_b(P)}(\sigma 1_Q).
\end{equation*}
On the level of the function, we split one more time to write
\begin{equation*}
\begin{split}
  S_{\mathscr{K}^a_b(P)}(\sigma 1_Q)
  &=\sum_{n=0}^{\infty} 1_{E^a_b(P,n)}S_{\mathscr{K}^a_b(P)}(\sigma 1_Q),\\
  E^a_b(P,n) &:=\{x\in\R^d:n2^{-b}\ave{\sigma}_P<\abs{S_{\mathscr{K}^a_b(P)}(\sigma 1_Q)(x)}\leq (n+1)2^{-b}\ave{\sigma}_P\}.
\end{split}
\end{equation*}
This final splitting, from \cite{HLMORSU}, is not strictly `necessary' in that it was not part of the original argument in \cite{Hytonen:A2}, nor its predecessor in \cite{LPR}, which made instead more careful use of the cubes where $S_{\mathscr{K}^a_b(P)}(\sigma 1_Q)$ stays constant; however, it now seems that this splitting provides another simplification of the argument.

Now all relevant cancellation is inside the functions $S_{\mathscr{K}^a_b(P)}(\sigma 1_Q)$, so that we can simply estimate by the triangle inequality:
\begin{equation*}
\begin{split}
  &\Babs{\sum_{K\subseteq Q}A_K(\sigma 1_Q)} \\
  &\quad\leq\sum_{k=0}^{\kappa}\sum_{a}\sum_{P\in\mathscr{P}^a}\sum_{b=0}^{\infty}\sum_{n=0}^{\infty}(1+n)2^{-b}
     \ave{\sigma}_P 1_{\{\abs{S_{\mathscr{K}^a_b(P)}(\sigma 1_Q)}>n2^{-b}\ave{\sigma}_P\}},
\end{split}
\end{equation*}
and
\begin{equation*}
\begin{split}
  &\BNorm{\sum_{K\subseteq Q}A_K(\sigma 1_Q)}{L^2(w)} \\
  &\leq\sum_{k=0}^{\kappa}\sum_{a}\sum_{b=0}^{\infty}2^{-b}\sum_{n=0}^{\infty}(1+n)
     \BNorm{\sum_{P\in\mathscr{P}^a}\ave{\sigma}_P 1_{\{\abs{S_{\mathscr{K}^a_b(P)}(\sigma 1_Q)}>n2^{-b}\ave{\sigma}_P\}}}{L^2(w)}.
\end{split}
\end{equation*}
Obviously, we will need good estimates to be able to sum up these infinite series.

Write the last norm as
\begin{equation*}
   \Big(\int\Big[\sum_{P\in\mathscr{P}^a}\ave{\sigma}_P 1_{\{\abs{S_{\mathscr{K}^a_b(P)}(\sigma 1_Q)}>n2^{-b}\ave{\sigma}_P\}}(x)\Big]^2 \ud w(x)\Big)^{1/2},
\end{equation*}
observe that
\begin{equation*}
  \{\abs{S_{\mathscr{K}^a_b(P)}(\sigma 1_Q)}>n2^{-b}\ave{\sigma}_P\}\subseteq P,
\end{equation*}
and look at the integrand at a fixed point $x\in\R^d$. At this point we sum over a subset of those values of $\ave{\sigma}_P$ where the principal cube $P\owns x$. Let $P_0$ be the smallest cube such that $\abs{S_{\mathscr{K}^a_b(P)}}>n2^{-b}\ave{\sigma}_P$, let $P_1$ be the next smallest, and so on. Then $\ave{\sigma}_{P_m}<2^{-1}\ave{\sigma}_{P_{m-1}}<\ldots<2^{-m}\ave{\sigma}_{P_0}$ by the construction of the principal cubes, and hence
\begin{equation*}
\begin{split}
  \Big[\sum_{P\in\mathscr{P}^a}\ave{\sigma}_P 1_{\{\abs{S_{\mathscr{K}^a_b(P)}}>n2^{-b}\ave{\sigma}_P\}}(x)\Big]^2
  &=\Big[\sum_{m=0}^{\infty}\ave{\sigma}_{P_m}\Big]^2 \\
  &\leq\Big[\sum_{m=0}^{\infty}2^{-m}\ave{\sigma}_{P_0}\Big]^2
    =4\ave{\sigma}_{P_0}^2 \\
   &\leq 4\sum_{P\in\mathscr{P}^a}\ave{\sigma}_P^2 1_{\{\abs{S_{\mathscr{K}^a_b(P)}(\sigma 1_Q)}>n2^{-b}\ave{\sigma}_P\}}(x)
\end{split}
\end{equation*}
Hence
\begin{equation*}
\begin{split}
  &\BNorm{\sum_{P\in\mathscr{P}^a}\ave{\sigma}_P 1_{\{\abs{S_{\mathscr{K}^a_b(P)}(\sigma 1_Q)}>n2^{-b}\ave{\sigma}_P\}}}{L^2(w)} \\
  &\leq \Big(\int\Big[4\sum_{P\in\mathscr{P}^a}\ave{\sigma}_P^2 1_{\{\abs{S_{\mathscr{K}^a_b(P)}(\sigma 1_Q)}>n2^{-b}\ave{\sigma}_P\}}\Big]w\Big)^{1/2} \\
  &=2\Big(\sum_{P\in\mathscr{P}^a}\ave{\sigma}_P^2 w(\{\abs{S_{\mathscr{K}^a_b(P)}(\sigma 1_Q)}>n2^{-b}\ave{\sigma}_P\})\Big)^{1/2},
\end{split}
\end{equation*}
and it remains to obtain good estimates for the measure of the level sets
\begin{equation*}
  \{\abs{S_{\mathscr{K}^a_b(P)}(\sigma 1_Q)}>n2^{-b}\ave{\sigma}_P\}.
\end{equation*}

\subsection{Weak-type and John--Nirenberg-style estimates}
We still need to estimate the sets above. Recall that $S_{\mathscr{K}^a_b(P)}$ is a subshift of $S$, which in particular has its scales separated so that $\log_2\ell(K)\equiv k\mod (\kappa+1)$ for all $K$ for which $A_K$ participating in $S_{\mathscr{K}^a_b(P)}$ is nonzero and $k\in\{0,1,\ldots,\kappa:=\max\{i,j\}\}$ is fixed, $S$ being of type $(i,j)$. The following estimate deals with such subshifts, which we simply denote by $S$.

\begin{proposition}
Let $S$ be a dyadic shift of type $(i,j)$ with scales separated. Then
\begin{equation*}
  \abs{\{\abs{Sf}>\lambda\}}\leq\frac{C}{\lambda}\Norm{f}{1},\qquad\forall\lambda>0,
\end{equation*}
where $C$ depends only on the dimension.
\end{proposition}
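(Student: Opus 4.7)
The plan is a classical Calder\'on--Zygmund argument, with the scale-separation hypothesis entering to pin down the support of each bad piece $Sb_\ell$. First, perform the standard dyadic CZ decomposition of $f$ at height $\lambda$: obtain pairwise disjoint stopping cubes $\{Q_\ell\}$ with $\sum_\ell\abs{Q_\ell}\leq\Norm{f}{1}/\lambda$ and $f=g+b$, where $b=\sum_\ell b_\ell$, $b_\ell=(f-\ave{f}_{Q_\ell})1_{Q_\ell}$ has mean zero, $\Norm{b_\ell}{1}\leq 2\int_{Q_\ell}\abs{f}$, $\Norm{g}{\infty}\lesssim\lambda$, and $\Norm{g}{1}\leq\Norm{f}{1}$. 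The good part is handled by the $L^2$-boundedness $\Norm{S}{L^2\to L^2}\leq 1$: $\Norm{Sg}{2}^2\leq\Norm{g}{\infty}\Norm{g}{1}\lesssim\lambda\Norm{f}{1}$, and Chebyshev yields $\abs{\{\abs{Sg}>\lambda/2\}}\lesssim\Norm{f}{1}/\lambda$.

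For the bad part, the starting point is a cancellation of Haar coefficients: $\pair{b_\ell}{h_I}=0$ whenever $I\not\subseteq Q_\ell$. Indeed, if $I\cap Q_\ell=\varnothing$ this is immediate, and if $Q_\ell\subsetneq I$ then $Q_\ell$ sits inside a single dyadic descendant of $I$ on which $h_I$ is constant, so the pairing reduces to a constant times $\int b_\ell=0$. Consequently $A_K b_\ell$ can be nonzero only when some $I\subseteq Q_\ell$ satisfies $I\subseteq K$ with $\ell(I)=2^{-i}\ell(K)$, leaving two possibilities: either $K\subseteq Q_\ell$, in which case $A_K b_\ell$ is supported inside $Q_\ell$, or $K\supsetneq Q_\ell$ with $\ell(K)\leq 2^i\ell(Q_\ell)$.

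In the second case, scale separation is decisive: admissible values of $\log_2\ell(K)$ lie in the half-open interval $(\log_2\ell(Q_\ell),\log_2\ell(Q_\ell)+i]$ of length $i\leq\kappa<\kappa+1$, while the allowed scales are spaced by exactly $\kappa+1$; hence at most one allowed scale, and thus at most one containing cube $K_\ell$, occurs. On the complement of $\bigcup_\ell Q_\ell$ the only surviving term of $Sb_\ell$ is therefore the single $A_{K_\ell}b_\ell$ (if it exists). The pointwise averaging bound $\abs{A_K b_\ell}\leq 1_K\fint_K\abs{b_\ell}$ then gives $\Norm{A_{K_\ell}b_\ell}{1}\leq\Norm{b_\ell}{1}$, so summing in $\ell$,
\begin{equation*}
  \int_{(\bigcup_\ell Q_\ell)^c}\abs{Sb}
  \leq\sum_\ell\Norm{A_{K_\ell}b_\ell}{1}
  \leq\sum_\ell\Norm{b_\ell}{1}\leq 2\Norm{f}{1}.
\end{equation*}
Chebyshev on the complement, together with the trivial estimate $\abs{\bigcup_\ell Q_\ell}\leq\Norm{f}{1}/\lambda$, finishes the bound for $Sb$.

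The main subtlety to resist is that $\abs{K_\ell}$ can be as large as $2^{\kappa d}\abs{Q_\ell}$; one must \emph{not} cover the exceptional set by the dilations $K_\ell$ themselves, as this would bring in a $\kappa$-dependent constant. The point is that the dimension-free $L^1$ contraction $\Norm{A_K f}{1}\leq\Norm{f}{1}$ absorbs all the loss before this dilation is ever invoked, so the final constant depends only on the dimension.
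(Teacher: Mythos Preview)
Your proof is correct and follows essentially the same route as the paper: Calder\'on--Zygmund decomposition, $L^2$ boundedness for the good part, and for the bad part the observation that $\langle b_\ell,h_I\rangle=0$ unless $I\subseteq Q_\ell$, which together with scale separation leaves at most one surviving $A_{K_\ell}b_\ell$ outside $Q_\ell$, handled by the $L^1$ contraction $\Norm{A_K f}{1}\leq\Norm{f}{1}$. The only cosmetic difference is that the paper splits $Sb$ into $\sum_{K\subseteq L}$ and $\sum_{K\supsetneq L}$ and applies Chebyshev globally to the latter, whereas you integrate $\abs{Sb}$ over $(\bigcup_\ell Q_\ell)^c$ and throw away $\bigcup_\ell Q_\ell$ directly; the arithmetic is the same.
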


\begin{proof}
The proof uses the classical Calder\'on--Zygmund decomposition:
\begin{equation*}
  f=g+b,\qquad b:=\sum_{L\in\mathscr{B}}b_L:=\sum_{L\in \mathscr{B}}1_B(f-\ave{f}_L),
\end{equation*}
where $L\in\mathscr{B}$ are the maximal dyadic cubes with $\ave{|f|}_L>\lambda$; hence $\ave{|f|}_L\leq 2^d\lambda$. As usual,
\begin{equation*}
  g=f-b=1_{\big(\bigcup\mathscr{B}\big)^c}f+\sum_{L\in\mathscr{B}}\ave{f}_L
\end{equation*}
satisfies $\Norm{g}{\infty}\leq 2^d\lambda$ and $\Norm{g}{1}\leq\Norm{f}{1}$, hence $\Norm{g}{2}^2\leq\Norm{g}{\infty}\Norm{g}{1}\leq 2^{d}\lambda\Norm{f}{1}$, and thus
\begin{equation*}
  \abs{\{\abs{Sg}>\tfrac12\lambda\}}\leq\frac{4}{\lambda^2}\Norm{Sg}{2}^2
  \leq\frac{4}{\lambda^2}\Norm{g}{2}^2\leq 4\cdot 2^d\frac{1}{\lambda}\Norm{f}{1}.
\end{equation*}

It remains to estimate $\{\abs{Sb}>\tfrac12\lambda\}$. First observe that
\begin{equation*}
  Sb=\sum_{K\in\mathscr{D}}\sum_{L\in\mathscr{B}}A_K b_L
   =\sum_{L\in\mathscr{B}}\Big(\sum_{K\subseteq L}A_K b_L+\sum_{K\supsetneq L}A_K b_L\Big),
\end{equation*}
since $A_K b_L\neq 0$ only if $K\cap L\neq\varnothing$. Now
\begin{equation*}
\begin{split}
  \abs{\{\abs{Sb}>\tfrac12\lambda\}}
  &\leq\Babs{\Big\{\Babs{\sum_{L\in\mathscr{B}}\sum_{K\subseteq L}A_K b_L}>0\Big\}}
    +\Babs{\Big\{\Babs{\sum_{L\in\mathscr{B}}\sum_{K\supsetneq L}A_K b_L}>\tfrac12\lambda\Big\}} \\
  &\leq\sum_{L\in\mathscr{B}}\abs{L}+\frac{2}{\lambda}\BNorm{\sum_{L\in\mathscr{B}}\sum_{K\supsetneq L}A_K b_L}{1} \\
  &\leq\frac{1}{\lambda}\Norm{f}{1}+\frac{2}{\lambda}\sum_{L\in\mathscr{B}}\sum_{K\supsetneq L}\Norm{A_K b_L}{1},
\end{split}
\end{equation*}
where we used the elementary properties of the Calder\'on--Zygmund decomposition to estimate the first term.

For the remaining double sum, we still need some observations. Recall that
\begin{equation*}
  A_K b_L=\sum_{\substack{I,J\subseteq K \\ \ell(I)=2^{-i}\ell(K)\\ \ell(J)=2^{-j}\ell(K)}}a_{IJK}h_I\pair{h_J}{b_L}.
\end{equation*}
Now, if $\ell(K)>2^{\kappa}\ell(L)\geq 2^j\ell(L)$, then $\ell(J)>\ell(L)$, and hence $h_J$ is constant on $L$. But the integral of $b_L$ vanishes, hence $\pair{h_J}{b_L}=0$ for all relevant $J$, and thus $A_K b_L=0$ whenever $\ell(K)>2^\kappa\ell(L)$.

Thus, in the inner sum, the only possible nonzero terms are $A_K b_L$ for $K=L^{(m)}$ for $m=1,\ldots,\kappa$. By the separation of scales, at most one of these terms is nonzero, and we write $\tilde{L}$ for the corresponding unique $K$. So in fact
\begin{equation*}
  \frac{2}{\lambda}\sum_{L\in\mathscr{B}}\sum_{K\supsetneq L}\Norm{A_K b_L}{1}
  =\frac{2}{\lambda}\sum_{L\in\mathscr{B}}\Norm{A_{\tilde L} b_L}{1}
  \leq\frac{2}{\lambda}\sum_{L\in\mathscr{B}}\Norm{b_L}{1}
  \leq\frac{2}{\lambda}\cdot 2\Norm{f}{1}=\frac{4}{\lambda}\Norm{f}{1}
\end{equation*}
by using the normalized boundedness of the averaging operators $A_{\tilde L}$ on $L^1(\R^d)$, and an elementary estimate for the bad part of the Calder\'on--Zygmund decomposition.

Altogether, we obtain the claim with $C=4\cdot 2^d+5$.
\end{proof}

For the special subshifts $S_{\mathscr{K}^a_b(P)}$, we can improve the weak-type $(1,1)$ estimate to an exponential decay:

\begin{proposition}
Let $S_{\mathscr{K}^a_b(P)}$ be the subshift of $S$ as constructed earlier. Then the following estimate holds when $\nu$ is either the Lebesgue measure or $w$:
\begin{equation*}
  \nu\Big(\Big\{\abs{S_{\mathscr{K}^a_b(P)}(\sigma1_Q)}>C2^{-b}\ave{\sigma}_P\cdot t\Big\}\Big)
  \lesssim C2^{-t}\nu(P),\qquad t\geq 0,
\end{equation*}
where $C$ is a constant.
\end{proposition}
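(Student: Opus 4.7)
The plan is a John--Nirenberg-style iteration of the weak-type $(1,1)$ bound from the preceding proposition, powered by the specific structure of $\mathscr{K}^a_b(P)$. Two ingredients are essential: first, the defining inequality $\sigma(K)/\abs{K}\leq 2^{1-b}\ave{\sigma}_P$ yields the uniform pointwise atom bound
\begin{equation*}
  \abs{A_K(\sigma 1_Q)(x)}\leq 1_K(x)\ave{\sigma}_K\leq 2M\cdot 1_K(x),\qquad M:=2^{-b}\ave{\sigma}_P,
\end{equation*}
for every $K\in\mathscr{K}^a_b(P)$; second, the separation of scales by $\kappa+1\geq j+1$ steps implies that for any dyadic cube $R$ at one of the distinguished scales and every $K\in\mathscr{K}^a_b(P)$ with $K\supsetneq R$, the function $A_K(\sigma 1_Q)$ is constant on $R$. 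This gives the martingale-type decomposition
\begin{equation*}
  S_{\mathscr{K}^a_b(P)}(\sigma 1_Q)\big|_R=c_R+T_R(\sigma 1_Q),\qquad T_R:=S_{\mathscr{K}^a_b(P)\cap\{K\subseteq R\}},
\end{equation*}
with $c_R$ a constant on $R$ and $T_R$ itself a separated-scales subshift to which the weak-type $(1,1)$ bound applies.

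The core self-improving estimate follows: for $R\in\mathscr{K}^a_b(P)$, $\sigma(R)\leq 2M\abs{R}$, so the weak-type $(1,1)$ bound applied to $T_R(\sigma 1_Q)$ gives
\begin{equation*}
  \abs{\{x\in R:\abs{T_R(\sigma 1_Q)(x)}>C_0 M\}}\leq\frac{C\sigma(R)}{C_0 M}\leq\frac{2C}{C_0}\abs{R},
\end{equation*}
which can be forced below any prescribed $\alpha\abs{R}$ by taking $C_0$ large enough. I would iterate on a stopping tree $\mathscr{F}_0:=\{P\}$, with $\mathscr{F}_{t+1}(R)$ the maximal $R'\in\mathscr{K}^a_b(P)$, $R'\subsetneq R$, on which $\abs{c_{R'}-c_R}>C_0 M$, observing that on the complement in $R$ the function $T_R(\sigma 1_Q)$ is bounded by $C_0 M$ once $C_0$ exceeds an absolute constant, because only the single atom $A_R(\sigma 1_Q)\leq 2M$ can contribute there. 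The self-improving estimate then forces $\abs{\bigcup\mathscr{F}_{t+1}(R)}\leq\alpha\abs{R}$ for each $R$, so $\abs{\bigcup\mathscr{F}_t}\leq\alpha^t\abs{P}$, while off the stopping set $\abs{S_{\mathscr{K}^a_b(P)}(\sigma 1_Q)}\leq C_0 tM$, giving the claim for $\nu=$~Lebesgue with the choice $\alpha=\tfrac12$.

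For $\nu=w$ the same stopping tree works, provided one transfers the Lebesgue bound $\abs{\bigcup\mathscr{F}_{t+1}(R)}\leq\alpha\abs{R}$ to a comparable $w$-bound. This uses a third structural feature of $\mathscr{K}^a_b(P)$: combining $w(K)\sigma(K)/\abs{K}^2\in(2^{a-1},2^a]$ with $\sigma(K)/\abs{K}\in(M,2M]$ pins down $w(K)/\abs{K}\asymp 2^a/M$ uniformly for $K\in\mathscr{K}^a_b(P)$. Since both the stopping cubes and their parents lie in $\mathscr{K}^a_b(P)$, the $w$-density is frozen within a universal factor, so the Lebesgue bound converts into $w(\bigcup\mathscr{F}_{t+1}(R))\leq c_0\alpha\,w(R)$ for a fixed $c_0$, and choosing $\alpha$ small enough (i.e.\ $C_0$ large enough) yields $w(\bigcup\mathscr{F}_t)\leq 2^{-t}w(P)$. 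The main obstacle I anticipate is not any of these steps in isolation but the argument that the complement of the stopping cubes genuinely contributes only $O(M)$ to $T_R(\sigma 1_Q)$---this is what makes the simple chaining of the self-improving estimate legitimate, and it is precisely what the scale separation and atom bound were cooked up to deliver.
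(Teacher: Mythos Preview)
Your strategy is the same as the paper's: a John--Nirenberg iteration powered by the weak-type $(1,1)$ bound, the atom bound $|A_K(\sigma 1_Q)|\leq 2M\cdot 1_K$, the constancy of outer sums from scale separation, and the frozen $w$-density $w(K)/|K|\asymp 2^a/M$ for the transfer to $\nu=w$. The paper organizes the iteration as a direct level-set recursion $|\{|S|>n\lambda\}|\leq\tfrac12|\{|S|>(n-1)\lambda\}|$ via maximal cubes $L$ on which the outer sum first exceeds $(n-\tfrac23)\lambda$; your stopping-tree formulation is an equivalent bookkeeping.

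Two small points need repair. First, $\mathscr{F}_0=\{P\}$ is wrong unless $b=0$: for $b\geq 1$ one has $\sigma(P)/|P|=2^bM\gg 2M$, so the weak-type bound applied to $T_P$ gives nothing useful. Start instead with $\mathscr{F}_0=\mathscr{M}$, the maximal cubes of $\mathscr{K}^a_b(P)$ (as the paper does), and recover the bound by $\nu(P)$ from $\sum_{R\in\mathscr{M}}\nu(R)\leq\nu(P)$. Second, the inference ``the self-improving estimate forces $|\bigcup\mathscr{F}_{t+1}(R)|\leq\alpha|R|$'' is not immediate: your stopping criterion is on the constant $c_{R'}-c_R$, while the weak-type bound controls $T_R$, and on a stopping cube $R'$ one has $T_R=(c_{R'}-c_R)+T_{R'}$, where $T_{R'}$ may cancel the large constant. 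The fix is short: apply the weak-type bound to each $T_{R'}$ (using $\sigma(R')\leq 2M|R'|$) to get $|T_{R'}|\leq C_0M/2$ on a subset of $R'$ of proportion $\geq 1-O(1/C_0)$, on which then $|T_R|>C_0M/2$; summing over the disjoint $R'$ and comparing with the weak-type bound for $T_R$ on $R$ at level $C_0M/2$ gives the claimed decay. This is exactly the role of the set $\tilde L$ with $|\tilde L|\geq\tfrac23|L|$ in the paper's proof. The obstacle you flagged---controlling $T_R$ on the complement---is actually the easy direction: there $|c_{R'}-c_R|\leq C_0M$ for the minimal $R'\ni x$, and $T_R(x)=(c_{R'}-c_R)+A_{R'}(\sigma 1_Q)(x)$, so $|T_R(x)|\leq(C_0+2)M$.
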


\begin{proof}
Let $\lambda:=C2^{-b}\ave{\sigma}_P$, where $C$ is a large constant, and $n\in\Z_+$. Let $x\in\R^d$ be a point where
\begin{equation}\label{eq:>nLambda}
  \abs{S_{\mathscr{K}^a_b(P)}(\sigma 1_Q)(x)}>n\lambda.
\end{equation}
Then for all small enough $L\in\mathscr{K}^a_b(P)$ with $L\owns x$, there holds
\begin{equation*}
  \Babs{\sum_{\substack{K\in\mathscr{K}^a_b(P) \\ K\supseteq L}}A_K(\sigma 1_Q)(x)}>n\lambda.
\end{equation*}
Since $\displaystyle\sum_{\substack{K\in\mathscr{K}^a_b(P) \\ K\supsetneq L}}A_K(\sigma1_Q)$ is constant on $L$ (thanks to separation of scales), and
\begin{equation}\label{eq:ALwQ}
  \Norm{A_L(\sigma 1_Q)}{\infty}\lesssim\frac{\sigma(L)}{\abs{L}}\leq 2^{1-b}\frac{\sigma(P)}{\abs{P}},
\end{equation}
it follows that
\begin{equation}\label{eq:>n-2/3}
   \Babs{\sum_{\substack{K\in\mathscr{K}^a_b(P) \\ K\supsetneq L}}A_K(\sigma 1_Q)}>(n-\tfrac{2}{3})\lambda\qquad\text{on }L.
\end{equation}
Let $\mathscr{L}\subseteq\mathscr{K}^a_b(P)$ be the collection of maximal cubes with the above property. Thus all $L\in\mathscr{L}$ are disjoint, and all $x$ with \eqref{eq:>nLambda} belong to some $L$. By maximality of $L$, the minimal $L^*\in\mathscr{K}^a_b(S)$ with $L^*\supsetneq L$ satisfies
\begin{equation*}
   \Babs{\sum_{\substack{K\in\mathscr{K}^a_b(P) \\ K\supsetneq L^*}}A_K(\sigma 1_Q)}\leq(n-\tfrac{2}{3})\lambda\qquad\text{on }L^*.
\end{equation*}
By an estimate similar to \eqref{eq:ALwQ}, with $L^*$ in place of $L$, it follows that
\begin{equation*}
  \Babs{\sum_{\substack{K\in\mathscr{K}^a_b(P) \\ K\supsetneq L}}A_K(\sigma 1_Q)}\leq (n-\tfrac{1}{3})\lambda\qquad\text{on }L.
\end{equation*}
Thus, if $x$ satisfies \eqref{eq:>nLambda} and $x\in L\in\mathscr{L}$, then necessarily
\begin{equation*}
  \abs{S_{\{K\in\mathscr{K}^a_b(P); K\subseteq L\}}(\sigma 1_{Q})(x)}=
  \Babs{\sum_{\substack{K\in\mathscr{K}^a_b(P) \\ K\subseteq L}}A_K(\sigma 1_Q)(x)}>\tfrac13\lambda.
\end{equation*}
Using the weak-type $L^1$ estimate to the shift $S_{\{K\in\mathscr{K}^a_b(P);K\subseteq L\}}$ of type $(i,j)$ with scales separated, noting that $A_K(\sigma 1_Q)=A_K(\sigma 1_L)$ for $K\subseteq L$, it follows that
\begin{align*}
  \Babs{\Big\{\Babs{\sum_{\substack{K\in\mathscr{K}^a_b(P) \\ K\subseteq L}}A_K(\sigma 1_Q)(x)}>\tfrac13\lambda\Big\}}
  &\leq \frac{C}{\lambda}\sigma(L) \\
  &\leq\frac{C}{\lambda}2^{1-b}\frac{\sigma(S\cap Q)}{\abs{S}}\abs{L} \leq \tfrac13\abs{L},
\end{align*}
provided that the constant in the definition of $\lambda$ was chosen large enough.
Recalling \eqref{eq:>n-2/3}, there holds
\begin{align*}
  \Babs{\sum_{K\in\mathscr{K}^a_b(P)}A_K(\sigma 1_Q)}
  &\geq\Babs{\sum_{\substack{K\in\mathscr{K}^a_b(P) \\ K\supsetneq L}}A_K(\sigma1_Q)}
     -\Babs{\sum_{\substack{K\in\mathscr{K}^a_b(P) \\ K\subseteq L}}A_K(\sigma 1_Q)} \\
  &>(n-\tfrac23)\lambda-\tfrac13\lambda=(n-1)\lambda\quad\text{on }\tilde{L}\subset L\text{ with }\abs{\tilde{L}}\geq\tfrac23\abs{L}.
\end{align*}
Thus
\begin{align*}
  \abs{\{\abs{S_{\mathscr{K}^a_b(P)}(\sigma 1_Q)}>n\lambda\}}
  &\leq\sum_{L\in\mathscr{L}}\abs{L\cap \{\abs{S_{\mathscr{K}^a_b(P)}(\sigma1_Q)}>n\lambda\}} \\
  &\leq\sum_{L\in\mathscr{L}}\abs{\{\abs{S_{\{K\in\mathscr{K}^a_b(P):K\subseteq L\}}(\sigma 1_Q)}>\tfrac13\lambda\}} \\
  &\leq\sum_{L\in\mathscr{L}}\tfrac13\abs{L}\leq\sum_{L\in\mathscr{L}}\tfrac13\cdot\tfrac 32\abs{\tilde{L}} \\
  &\leq\tfrac12\sum_{L\in\mathscr{L}} \abs{L\cap\{ \abs{S_{\mathscr{K}^a_b(P)}(\sigma 1_Q)}>(n-1)\lambda\}} \\
  &\leq\tfrac12\abs{\{ \abs{S_{\mathscr{K}^a_b(P)}(\sigma 1_Q)}>(n-1)\lambda\}}.
\end{align*}
By induction it follows that
\begin{align*}
  \abs{\{\abs{S_{\mathscr{K}^a_b(P)}(\sigma 1_Q)}>n\lambda\}}
  &\leq 2^{-n}\abs{\{ \abs{S_{\mathscr{K}^a_b(P)}(\sigma 1_Q)}>0\}} \\
  &\leq 2^{-n}\sum_{M\in\mathscr{M}}\abs{M}\leq 2^{-n}\abs{P},
\end{align*}
where $\mathscr{M}$ is the collection of maximal cubes in $\mathscr{K}^a_b(S)$.

Recalling that we defined $\lambda:=C2^{-b}\ave{\sigma}_P$ in the beginning of the proof, the previous display gives precisely the claim of the Proposition in the case that $\nu$ is the Lebesgue measure. We still need to consider the case that $\nu=w$. To this end, selected intermediate steps of the above computation, as well as the definition of $\mathscr{K}^a_b(P)$, will be exploited. Recall that $K\in\mathscr{K}^a$ means that $2^{a-1}<\ave{w}_K\ave{\sigma}_K\leq 2^a$, while $K\in\mathscr{K}^a_b(P)$ means that in addition $2^{-b}<\ave{\sigma}_K/\ave{\sigma}_P\leq 2^{1-b}$. Put together, this says that
\begin{equation*}
  2^{a+b-2}\ave{\sigma}_P<\frac{w(K)}{\abs{K}}<2^{a+b}\ave{\sigma}_P\qquad\forall K\in\mathscr{K}^a_b(P).
\end{equation*}
Hence, using the collections $\mathscr{L},\mathscr{M}\subseteq\mathscr{K}^a_b(P)$ as above,
\begin{align*}
  w(\{\abs{S_{\mathscr{K}^a_b(P)}(\sigma 1_Q)}>n\lambda\})
  &\leq\sum_{L\in\mathscr{L}}w(L) 
    \leq\sum_{L\in\mathscr{L}}2^{a+b}\ave{\sigma}_P\abs{L} \\
  &\leq 2^{a+b}\ave{\sigma}_P\abs{\{ \abs{S_{\mathscr{K}^a_b(P)}(\sigma 1_Q)}>(n-1)\lambda\}} \\
  &\leq 2^{a+b}\ave{\sigma}_P\cdot 2^{-n}\sum_{M\in\mathscr{M}}\abs{M} \\
  &\leq 4\cdot 2^{-n}\sum_{M\in\mathscr{M}}w(M)\leq 4\cdot 2^{-n}w(S).\qedhere
\end{align*}
\end{proof}

\subsection{Conclusion of the estimation of the testing conditions}
Recall that
\begin{equation*}
\begin{split}
  &\BNorm{\sum_{K\subseteq Q}A_K(\sigma 1_Q)}{L^2(w)} \\
  &\leq\sum_{k=0}^{\kappa}\sum_{a}\sum_{b=0}^{\infty}2^{-b}\sum_{n=0}^{\infty}(1+n)
     \BNorm{\sum_{P\in\mathscr{P}^a}\ave{\sigma}_P 1_{\{\abs{S_{\mathscr{K}^a_b(P)}(\sigma 1_Q)}>n2^{-b}\ave{\sigma}_P\}}}{L^2(w)}
\end{split}
\end{equation*}
and
\begin{equation*}
\begin{split}
    &  \BNorm{\sum_{P\in\mathscr{P}^a}\ave{\sigma}_P 1_{\{\abs{S_{\mathscr{K}^a_b(P)}(\sigma 1_Q)}>n2^{-b}\ave{\sigma}_P\}}}{L^2(w)} \\
   &\leq  2\Big(\sum_{P\in\mathscr{P}^a}\ave{\sigma}_P^2 w(\{\abs{S_{\mathscr{K}^a_b(P)}(\sigma 1_Q)}>n2^{-b}\ave{\sigma}_P\})\Big)^{1/2} \\
   &\leq C\Big(\sum_{P\in\mathscr{P}^a}\ave{\sigma}_P^2 2^{-n/C}w(P)\Big)^{1/2} \\
   &=C2^{-cn}\Big(\sum_{P\in\mathscr{P}^a}\frac{\sigma(P)w(P)}{\abs{P}^2}\sigma(P)\Big)^{1/2} \\
   &\leq C2^{-cn}\Big(2^a\sum_{P\in\mathscr{P}^a}\sigma(P)\Big)^{1/2},
\end{split}
\end{equation*}
recalling the freezing of the $A_2$ characteristic between $2^{a-1}$ and $2^a$ for cubes in $\mathscr{K}^a\supseteq\mathscr{P}^a$.

For the summation over the principal cubes, we observe that
\begin{equation*}
\begin{split}
  \sum_{P\in\mathscr{P}^a}\sigma(P)
  =\sum_{P\in\mathscr{P}^a}\ave{\sigma}_P\abs{P}
  =\int_Q\sum_{P\in\mathscr{P}^a}\ave{\sigma}_P 1_P(x)\ud x.
\end{split}
\end{equation*}
At any given $x$, if $P_0\subsetneq P_1\subsetneq\ldots\subseteq Q$ are the principal cubes containing it, we have
\begin{equation*}
  \sum_{P\in\mathscr{P}^a}\ave{\sigma}_P 1_P(x)
  =\sum_{m=0}^{\infty}\ave{\sigma}_{P_m}
  \leq\sum_{m=0}^{\infty}2^{-m}\ave{\sigma}_{P_0}=2\ave{\sigma}_{P_0}
  \leq 2M(\sigma 1_Q)(x),
\end{equation*}
where $M$ is the dyadic maximal operator. Hence
\begin{equation*}
  \sum_{P\in\mathscr{P}^a}\sigma(P)
  \leq 2\int_Q M(\sigma 1_Q)\ud x
  \leq 2[\sigma]_{A_\infty}\sigma(Q),
\end{equation*}
where we use the following notion of the $A_\infty$ characteristic:
\begin{equation*}
  [\sigma]_{A_\infty}:=\sup_Q\frac{1}{\sigma(Q)}\int_Q M(\sigma 1_Q)\ud x;
\end{equation*}
this was implicit already in the work of Fujii \cite{Fujii:weightedBMO} and it was taken as an explicit definition by the author and C. P\'erez \cite{HytPer}.

Substituting back, we have
\begin{equation*}
\begin{split}
  &\BNorm{\sum_{K\subseteq Q}A_K(\sigma 1_Q)}{L^2(w)} \\
  &\leq\sum_{k=0}^{\kappa}\sum_{a}\sum_{b=0}^{\infty}2^{-b}\sum_{n=0}^{\infty}(1+n)
     \BNorm{\sum_{P\in\mathscr{P}^a}\ave{\sigma}_P 1_{\{\abs{S_{\mathscr{K}^a_b(P)}(\sigma 1_Q)}>n2^{-b}\ave{\sigma}_P\}}}{L^2(w)} \\
   &\leq\sum_{k=0}^{\kappa}\sum_{a}\sum_{b=0}^{\infty}2^{-b}\sum_{n=0}^{\infty}(1+n)\cdot
      C2^{-cn}\Big(2^a\sum_{P\in\mathscr{P}^a}\sigma(P)\Big)^{1/2} \\
   &\leq\sum_{k=0}^{\kappa}\sum_{a}\sum_{b=0}^{\infty}2^{-b}\sum_{n=0}^{\infty}(1+n)\cdot
      C2^{-cn}\big(2^a[\sigma]_{A_\infty}\big)^{1/2} \\
   &=C\cdot[\sigma]_{A_\infty}^{1/2}\sum_{k=0}^{\kappa}\Big(\sum_{a\leq\ceil{\log_2[w,\sigma]_{A_2}}}2^{a/2}\Big)
       \Big(\sum_{b=0}^{\infty}2^{-b}\Big)\Big(\sum_{n=0}^{\infty}(1+n)\cdot 2^{-cn}\Big) \\
    &\leq C\cdot[\sigma]_{A_\infty}^{1/2}\cdot(1+\kappa)\cdot [w,\sigma]_{A_2}^{1/2},
\end{split}
\end{equation*}
and thus the testing constant $\mathfrak{S}$ is estimated by
\begin{equation*}
  \mathfrak{S}\leq C\cdot(1+\kappa)\cdot[w,\sigma]_{A_2}^{1/2}\cdot[\sigma]_{A_\infty}^{1/2}.
\end{equation*}
By symmetry, exchanging the roles of $w$ and $\sigma$, we also have the analogous result for $\mathfrak{S}^*$, and so we have completed the proof of the following:

\begin{theorem}\label{thm:testing}
Let $\sigma,w\in A_\infty$ be functions which satisfy the joint $A_2$ condition
\begin{equation*}
  [w,\sigma]_{A_2}:=\sup_Q\frac{w(Q)\sigma(Q)}{\abs{Q}^2}<\infty.
\end{equation*}
Then the testing constants $\mathfrak{S}$ and $\mathfrak{S}^*$ associated with a dyadic shift $S$ of type $(i,j)$ satisfy the following bounds, where $\kappa:=\max\{i,j\}$:
\begin{equation*}
\begin{split}
   \mathfrak{S} &\leq C\cdot(1+\kappa)\cdot[w,\sigma]_{A_2}^{1/2}\cdot[\sigma]_{A_\infty}^{1/2}, \\
  \mathfrak{S}^* &\leq C\cdot(1+\kappa)\cdot[w,\sigma]_{A_2}^{1/2}\cdot[w]_{A_\infty}^{1/2}. 
\end{split}
\end{equation*}
\end{theorem}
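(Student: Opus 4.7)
The plan is to estimate $\mathfrak{S}$ (the bound on $\mathfrak{S}^*$ being completely symmetric under the exchange $w \leftrightarrow \sigma$). I start by splitting $1_Q S(\sigma 1_Q) = 1_Q \sum_{K \subseteq Q} A_K(\sigma 1_Q) + 1_Q \sum_{K \supsetneq Q} A_K(\sigma 1_Q)$. The ancestor part admits a pointwise bound $|1_Q A_K(\sigma 1_Q)| \leq 1_Q \sigma(Q)/|K|$, giving after summation the acceptable contribution $[w,\sigma]_{A_2}\sigma(Q)^{1/2}$ (even without invoking $A_\infty$). So the real task is controlling $\sum_{K \subseteq Q} A_K(\sigma 1_Q)$ in $L^2(w)$.

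For this, I would perform a chain of four stratifications of the cubes $K \subseteq Q$: (i) \emph{separate scales} by congruence class $\log_2\ell(K) \bmod (\kappa{+}1)$, creating $\kappa{+}1$ collections $\mathscr{K}_k$ each of which behaves like a shift on well-separated generations; (ii) \emph{freeze the $A_2$ characteristic} into dyadic strata $\mathscr{K}^a$ where $w(K)\sigma(K)/|K|^2 \sim 2^a$, with $a \leq \lceil\log_2[w,\sigma]_{A_2}\rceil$; (iii) within each $\mathscr{K}^a$ choose \emph{principal cubes} $\mathscr{P}^a$ by a stopping-time rule based on doubling of $\sigma(P)/|P|$, and denote by $\mathscr{K}^a(P)$ the cubes stopped by $P$; (iv) \emph{freeze the $\sigma$-density ratio} via $\mathscr{K}^a_b(P) = \{K \in \mathscr{K}^a(P) : \langle\sigma\rangle_K/\langle\sigma\rangle_P \sim 2^{-b}\}$. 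A further layer-cake stratification by the level $n 2^{-b} \langle\sigma\rangle_P$ of $|S_{\mathscr{K}^a_b(P)}(\sigma 1_Q)|$ then linearizes everything via the triangle inequality.

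The crux will be an exponential decay (John--Nirenberg type) estimate for the subshifts $S_{\mathscr{K}^a_b(P)}$: I expect to prove that for $\nu \in \{|\cdot|, w\}$,
\begin{equation*}
  \nu\bigl(\bigl\{|S_{\mathscr{K}^a_b(P)}(\sigma 1_Q)| > C 2^{-b}\langle\sigma\rangle_P \cdot t\bigr\}\bigr) \lesssim 2^{-t} \nu(P).
\end{equation*}
This will be the main technical hurdle. I would prove it by induction in $n = \lfloor t \rfloor$: pick the maximal stopping cubes $L \in \mathscr{L}$ where the partial sum over $K \supsetneq L$ in $\mathscr{K}^a_b(P)$ exceeds $(n - \tfrac{2}{3})\lambda$; since scales are separated, this partial sum is constant on $L$ and the ``tail'' sum $\sum_{K \subseteq L} A_K(\sigma 1_Q)$ must itself exceed $\tfrac{1}{3}\lambda$. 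Applying a weak-type $(1,1)$ bound for shifts with separated scales (proved by a Calderón--Zygmund decomposition where scale separation kills all but one $A_K b_L$ term for each bad cube $L$) gives $|\{|{\rm tail}|>\tfrac{1}{3}\lambda\}| \leq \tfrac{1}{3}|L|$, which yields the factor $\tfrac{1}{2}$ for the induction. The $w$-version follows by using that $w(K)/|K| \sim 2^{a+b}\langle\sigma\rangle_P$ on $\mathscr{K}^a_b(P)$, converting Lebesgue measure to $w$-measure with a matching constant.

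With these level-set bounds in hand, the assembly is routine. On the squared-norm side I use that $\sum_{P \in \mathscr{P}^a}\langle\sigma\rangle_P 1_P(x)$ telescopes geometrically (by the doubling-stopping rule) to $2\langle\sigma\rangle_{P_0} \leq 2 M(\sigma 1_Q)(x)$, giving $\sum_{P \in \mathscr{P}^a} \sigma(P) \leq 2[\sigma]_{A_\infty} \sigma(Q)$ by the definition of $[\sigma]_{A_\infty}$. Combining with $\langle w\rangle_P \langle\sigma\rangle_P \leq 2^a$ on principal cubes, each inner $L^2(w)$-norm is bounded by $C 2^{-cn} (2^a[\sigma]_{A_\infty}\sigma(Q))^{1/2}$, and then the four geometric (or finite) sums over $n, b, a, k$ produce the factors $O(1), O(1), [w,\sigma]_{A_2}^{1/2}, (1+\kappa)$ respectively. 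This yields $\mathfrak{S} \leq C(1+\kappa)[w,\sigma]_{A_2}^{1/2}[\sigma]_{A_\infty}^{1/2}$, and the bound on $\mathfrak{S}^*$ follows by swapping $w$ and $\sigma$ throughout.
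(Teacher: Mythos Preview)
Your proposal is correct and follows essentially the same route as the paper: the same initial split into $K\subseteq Q$ and $K\supsetneq Q$, the same four-fold stratification (scale separation, freezing $\langle w\rangle_K\langle\sigma\rangle_K$, principal cubes for $\langle\sigma\rangle$, freezing $\langle\sigma\rangle_K/\langle\sigma\rangle_P$), the same weak-type $(1,1)$ via Calder\'on--Zygmund decomposition exploiting scale separation, the same John--Nirenberg induction for the subshifts $S_{\mathscr{K}^a_b(P)}$, and the same assembly using $\sum_{P\in\mathscr{P}^a}\sigma(P)\leq 2[\sigma]_{A_\infty}\sigma(Q)$. Even the numerology ($\tfrac13\lambda$, $\tfrac23\lambda$, the conversion $\langle w\rangle_K\sim 2^{a+b}$ on $\mathscr{K}^a_b(P)$) and the final bookkeeping of the four sums match the paper's argument line for line.
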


\section{Conclusions}

In this section we simply collect the fruits of the hard work done above.
A combination of Theorems~\ref{thm:2weight} and \ref{thm:testing} gives the following two-weight inequality, whose qualitative version was pointed out by Lacey, Petermichl and Reguera \cite{LPR}. In the precise form as stated, this result and its consequences below were obtained by P\'erez and myself \cite{HytPer}, although originally formulated only in the case that $\sigma=w^{-1}$ is the dual weight.

\begin{theorem}\label{thm:2weightShift}
Let $\sigma,w\in A_\infty$ be functions which satisfy the joint $A_2$ condition
\begin{equation*}
  [w,\sigma]_{A_2}:=\sup_Q\frac{w(Q)\sigma(Q)}{\abs{Q}^2}<\infty.
\end{equation*}
Then a dyadic shift $S$ of type $(i,j)$ satisfies $S(\sigma\cdot):L^2(\sigma)\to L^2(w)$, and more precisely
\begin{equation*}
  \Norm{S(\sigma\cdot)}{L^2(\sigma)\to L^2(w)}
  \lesssim (1+\kappa)^2[w,\sigma]_{A_2}^{1/2}\big([w]_{A_\infty}^{1/2}+[\sigma]_{A_\infty}^{1/2}\big),
\end{equation*}
where $\kappa=\max\{i,j\}$.
\end{theorem}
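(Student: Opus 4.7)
The plan is simply to combine the qualitative-to-quantitative reduction of Theorem~\ref{thm:2weight} with the testing-constant bounds of Theorem~\ref{thm:testing}. Since all the deep work has already been done in proving those two theorems, the proof here reduces to plugging one estimate into the other and tidying up the resulting constants.

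First I would invoke Theorem~\ref{thm:2weight}, which asserts that
\begin{equation*}
  \Norm{S(\sigma\cdot)}{L^2(\sigma)\to L^2(w)}
  \lesssim (1+\kappa)(\mathfrak{S}+\mathfrak{S}^*) + (1+\kappa)^2 [w,\sigma]_{A_2}^{1/2}.
\end{equation*}
This reduces the two-weight norm bound to controlling the two testing constants $\mathfrak{S}$ and $\mathfrak{S}^*$, together with a harmless $A_2$-type tail.

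Next I would insert the bounds from Theorem~\ref{thm:testing},
\begin{equation*}
  \mathfrak{S}\leq C(1+\kappa)[w,\sigma]_{A_2}^{1/2}[\sigma]_{A_\infty}^{1/2},\qquad
  \mathfrak{S}^*\leq C(1+\kappa)[w,\sigma]_{A_2}^{1/2}[w]_{A_\infty}^{1/2},
\end{equation*}
which are applicable precisely because the hypotheses $\sigma,w\in A_\infty$ and the joint $A_2$ condition are in force. Substituting, the contribution of the testing constants becomes
\begin{equation*}
  (1+\kappa)(\mathfrak{S}+\mathfrak{S}^*)
  \lesssim (1+\kappa)^2 [w,\sigma]_{A_2}^{1/2}\big([w]_{A_\infty}^{1/2}+[\sigma]_{A_\infty}^{1/2}\big).
\end{equation*}

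Finally I would absorb the leftover term $(1+\kappa)^2[w,\sigma]_{A_2}^{1/2}$ coming directly from Theorem~\ref{thm:2weight} into the expression above, using the elementary fact that every $A_\infty$ characteristic satisfies $[u]_{A_\infty}\geq 1$, so that $1\leq [w]_{A_\infty}^{1/2}+[\sigma]_{A_\infty}^{1/2}$. Summing both contributions then yields the advertised bound
\begin{equation*}
  \Norm{S(\sigma\cdot)}{L^2(\sigma)\to L^2(w)}
  \lesssim (1+\kappa)^2 [w,\sigma]_{A_2}^{1/2}\big([w]_{A_\infty}^{1/2}+[\sigma]_{A_\infty}^{1/2}\big).
\end{equation*}
There is no real obstacle here: the statement is purely a synthesis, and the only minor point to be careful about is that the $(1+\kappa)^2[w,\sigma]_{A_2}^{1/2}$ term from Theorem~\ref{thm:2weight} is dominated by the testing-constant term rather than producing an additional summand.
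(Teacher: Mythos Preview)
Your proposal is correct and is exactly the paper's own argument: the paper states that Theorem~\ref{thm:2weightShift} is simply ``a combination of Theorem~\ref{thm:2weight} and~\ref{thm:testing}'' without writing out any further details, and you have correctly supplied those details, including the absorption of the residual $(1+\kappa)^2[w,\sigma]_{A_2}^{1/2}$ term via $[u]_{A_\infty}\geq 1$.
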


The quantitative bound as stated, including the polynomial dependence on $\kappa$, allows to sum up these estimates in the Dyadic Representation Theorem to deduce:

\begin{theorem}\label{thm:2weightCZO}
Let $\sigma,w\in A_\infty$ be functions which satisfy the joint $A_2$ condition.
Then any $L^2$ bounded Calder\'on--Zygmund operator $T$ whose kernel $K$ has H\"older type modulus of continuity $\psi(t)=t^{\alpha}$, $\alpha\in(0,1)$, satisfies
\begin{equation*}
  \Norm{T(\sigma\cdot)}{L^2(\sigma)\to L^2(w)}
  \lesssim (\Norm{T}{L^2\to L^2}+\Norm{K}{CZ_\alpha})[w,\sigma]_{A_2}^{1/2}\big([w]_{A_\infty}^{1/2}+[\sigma]_{A_\infty}^{1/2}\big).
\end{equation*}
\end{theorem}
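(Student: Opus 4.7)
The plan is to invoke the Dyadic Representation Theorem~\ref{thm:formula} to reduce matters to the per-shift two-weight bound in Theorem~\ref{thm:2weightShift}, and then to sum the resulting series using the quantitative decay built into \eqref{eq:decay}. For the H\"older modulus $\psi(t)=t^\alpha$ I will choose $\phi(t)=t^\gamma$ with $\gamma\in(0,\alpha/(d+\alpha))$. A direct substitution into the definition of $\tau$ yields $\tau(i,j)\lesssim 2^{-\beta\max\{i,j\}}$ for some $\beta>0$, so
\begin{equation*}
  \sum_{i,j=0}^{\infty}(1+\max\{i,j\})^2\,\tau(i,j)<\infty.
\end{equation*}
This is precisely what is needed to absorb the polynomial factor $(1+\kappa)^2$ that appears in Theorem~\ref{thm:2weightShift}.

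With this choice of $\phi$, Theorem~\ref{thm:formula} gives the expansion
\begin{equation*}
  \pair{g}{T(\sigma f)}
  =c\bigl(\Norm{T}{L^2\to L^2}+\Norm{K}{CZ_\alpha}\bigr)\,\Exp_\omega\sum_{i,j=0}^{\infty}\tau(i,j)\,\pair{g}{S^{ij}_\omega(\sigma f)},
\end{equation*}
initially for $f,g\in C^1_c(\R^d)$. For each fixed $\omega$ and each pair $(i,j)$ the operator $S^{ij}_\omega$ is a dyadic shift of type $(i,j)$ on $\mathscr{D}^\omega$, so Theorem~\ref{thm:2weightShift} applies and delivers
\begin{equation*}
  \bigl|\pair{g}{S^{ij}_\omega(\sigma f)}\bigr|
  \lesssim (1+\max\{i,j\})^2\,[w,\sigma]_{A_2}^{1/2}\bigl([w]_{A_\infty}^{1/2}+[\sigma]_{A_\infty}^{1/2}\bigr)\Norm{g}{L^2(w)}\Norm{f}{L^2(\sigma)}.
\end{equation*}

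Inserting this bound, bringing absolute values inside $\Exp_\omega$, summing the (now absolutely convergent) double series, and taking the supremum over $g$ in the unit ball of $L^2(w)$ produces the required inequality on the dense class. A standard density/approximation step then extends the bound to all $f\in L^2(\sigma)$, since $\sigma f$ need not lie in $C^1_c$.

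The main obstacle is the interplay between the polynomial growth $(1+\kappa)^2$ emerging from Theorem~\ref{thm:2weightShift} and the decay of $\tau(i,j)$: one must choose the auxiliary modulus $\phi$ in the representation theorem aggressively enough that $\beta>0$, while still respecting the constraint $\phi(t)\geq t$ and the Dini condition. This is exactly where the H\"older regularity of the kernel is used quantitatively; a purely Dini modulus $\psi$ would not, in general, leave enough room to dominate the $(1+\kappa)^2$ factor and a more delicate summation or sharper shift estimate would be required. Everything else in the argument is bookkeeping: linearity of expectation, the triangle inequality, and duality against $g\in L^2(w)$.
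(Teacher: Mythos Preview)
Your approach is exactly the paper's: it, too, simply says that the polynomial-in-$\kappa$ bound of Theorem~\ref{thm:2weightShift} can be summed against the exponentially decaying $\tau(i,j)$ from the Dyadic Representation Theorem. Your choice $\phi(t)=t^{\gamma}$ with $\gamma<\alpha/(d+\alpha)$ is a perfectly valid alternative to the logarithmic modulus the paper uses in its discussion after~\eqref{eq:decay}; both give $\sum_{i,j}(1+\max\{i,j\})^2\tau(i,j)<\infty$.

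One small slip to fix: the pairing in Theorem~\ref{thm:formula} is the \emph{unweighted} $L^2$ pairing, so the displayed bound $\bigl|\pair{g}{S^{ij}_\omega(\sigma f)}\bigr|\lesssim(\cdots)\Norm{g}{L^2(w)}\Norm{f}{L^2(\sigma)}$ does not follow from Theorem~\ref{thm:2weightShift} as written, and taking the supremum over the unit ball of $L^2(w)$ in the unweighted pairing does not recover $\Norm{T(\sigma f)}{L^2(w)}$. Either apply the representation with $gw$ in place of $g$ (so that $\pair{gw}{S^{ij}_\omega(\sigma f)}=\pair{g}{S^{ij}_\omega(\sigma f)}_w$ and Cauchy--Schwarz in $L^2(w)$ gives the estimate you wrote), or keep $g$ but take the supremum over the unit ball of $L^2(w^{-1})$, which is the correct dual for the unweighted pairing. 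This is purely bookkeeping and does not affect the substance of the argument.
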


Recalling the dual weight trick and specializing to the one-weight situation with $\sigma=w^{-1}$, this in turn gives:

\begin{theorem}\label{thm:1weightCZO}
Let $w\in A_2$.
Then any $L^2$ bounded Calder\'on--Zygmund operator $T$ whose kernel $K$ has H\"older type modulus of continuity $\psi(t)=t^{\alpha}$, $\alpha\in(0,1)$, satisfies
\begin{equation*}
\begin{split}
  \Norm{T}{L^2(w)\to L^2(w)}
  &\lesssim (\Norm{T}{L^2\to L^2}+\Norm{K}{CZ_\alpha})[w]_{A_2}^{1/2}\big([w]_{A_\infty}^{1/2}+[w^{-1}]_{A_\infty}^{1/2}\big) \\
 &\lesssim (\Norm{T}{L^2\to L^2}+\Norm{K}{CZ_\alpha})[w]_{A_2}.
\end{split}
\end{equation*}
\end{theorem}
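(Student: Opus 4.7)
The plan is to deduce this one-weight bound directly from the two-weight result Theorem~\ref{thm:2weightCZO} via the dual weight trick recalled at the beginning of Section~4. First I would set $\sigma := w^{-1}$, which is pointwise well-defined because $w \in A_2$ implies $w(x) \in (0,\infty)$ almost everywhere. With this choice one has $\sigma^2 w = \sigma$, so the original inequality $\|Tf\|_{L^2(w)} \leq N \|f\|_{L^2(w)}$ is equivalent to $\|T(\phi\sigma)\|_{L^2(w)} \leq N \|\phi\|_{L^2(\sigma)}$, i.e.\ to the claim that $T(\sigma\cdot)$ is bounded from $L^2(\sigma)$ to $L^2(w)$ with norm at most $N$. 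Hence bounding $\|T\|_{L^2(w)\to L^2(w)}$ reduces to bounding $\|T(\sigma\,\cdot)\|_{L^2(\sigma)\to L^2(w)}$ for $\sigma = w^{-1}$.

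Next I would verify that the pair $(w,\sigma)$ satisfies the hypotheses of Theorem~\ref{thm:2weightCZO}. The joint $A_2$ constant is just the usual one: since $\sigma = w^{-1}$,
\begin{equation*}
  [w,\sigma]_{A_2} = \sup_Q \frac{w(Q)\,\sigma(Q)}{|Q|^2} = [w]_{A_2}.
\end{equation*}
Moreover, $w \in A_2$ implies both $w \in A_\infty$ and $\sigma = w^{-1} \in A_\infty$ (indeed, the $A_2$ condition is symmetric in $w$ and $w^{-1}$, so $[\sigma]_{A_2} = [w]_{A_2}$, and $A_2 \subset A_\infty$). All hypotheses of Theorem~\ref{thm:2weightCZO} are therefore met, and plugging in yields the first inequality
\begin{equation*}
  \|T\|_{L^2(w)\to L^2(w)} \lesssim \bigl(\|T\|_{L^2\to L^2} + \|K\|_{CZ_\alpha}\bigr)\,[w]_{A_2}^{1/2}\bigl([w]_{A_\infty}^{1/2} + [w^{-1}]_{A_\infty}^{1/2}\bigr).
\end{equation*}

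For the second, simpler bound, I would just invoke the standard and easy comparison $[u]_{A_\infty} \leq C_d\,[u]_{A_2}$ valid for any $u \in A_2$ (this follows from, e.g., the fact that $A_2 \subset A_\infty$ quantitatively via the definition of the $A_\infty$ characteristic through the dyadic maximal function and Hölder's inequality on the $A_2$ condition). Applied with $u = w$ and $u = \sigma = w^{-1}$, and using $[\sigma]_{A_2} = [w]_{A_2}$, one gets $[w]_{A_\infty}^{1/2} + [w^{-1}]_{A_\infty}^{1/2} \lesssim [w]_{A_2}^{1/2}$, which combined with the previous display produces the stated $[w]_{A_2}$ bound. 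Since every nontrivial ingredient has already been proved, there is essentially no obstacle here; the only thing to double-check is the symmetry $[w^{-1}]_{A_2}=[w]_{A_2}$ and the $A_\infty$-vs-$A_2$ comparison, both of which are classical.
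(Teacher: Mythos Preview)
Your proposal is correct and follows essentially the same route as the paper: specialize Theorem~\ref{thm:2weightCZO} to $\sigma=w^{-1}$ via the dual weight trick, identify $[w,\sigma]_{A_2}=[w]_{A_2}$, and then pass to the second line using $[w]_{A_\infty}\lesssim[w]_{A_2}$ and $[w^{-1}]_{A_\infty}\lesssim[w^{-1}]_{A_2}=[w]_{A_2}$.
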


The second displayed line is the original $A_2$ theorem \cite{Hytonen:A2}, and it follows from the first line by $[w]_{A_\infty}\lesssim[w]_{A_2}$ and $[w^{-1}]_{A_\infty}\lesssim [w^{-1}]_{A_2}=[w]_{A_2}$ (see Lemma~\ref{lem:A2Ainf} below). Its strengthening on the first line was first observed in my joint work with C.~P\'erez \cite{HytPer}. Note that, compared to the introductory statement in Theorem~\ref{thm:A2}, the dependence on the operator $T$ has been made more explicit. (The implied constants in the notation ``$\lesssim$'' only depend on the dimension and the H\"older exponent $\alpha$.) This dependence on $\Norm{T}{L^2\to L^2}$ and $\Norm{K}{CZ_\alpha}$ is implicit in the original proof. 

For completeness, we include the proof (in the stated form essentially from \cite{LPR}, but see also \cite{HytPer} for more general comparison of $A_\infty$ and $A_p$ constants) that

\begin{lemma}\label{lem:A2Ainf}
For all weights $w\in A_2$, we have
\begin{equation*}
  [w]_{A_\infty}:=\sup_Q\frac{1}{w(Q)}\int_Q M(1_Q w)\ud x\leq 8[w]_{A_2}.
\end{equation*}
\end{lemma}

\begin{proof}
Let $\mathcal{P}$ be the principal cubes of Muckenhoupt and Wheeden \cite{MW:77} given by $\mathcal{P}=\bigcup_{p=0}^\infty\mathcal{P}_p$, where $\mathcal{P}_0:=\{Q\}$ and $\mathcal{P}_{p+1}$ consists of the maximal $P'\subset P\in\mathcal{P}_p$ with $w(P')/\abs{P'}>2w(P)/\abs{P}$. Then
\begin{equation*}
  M(1_Q w)(x)=\sup_{R:x\in R\subseteq Q}\frac{w(R)}{\abs{R}}\leq 2\sup_{P\in\mathcal{P}:x\in P}\frac{w(P)}{\abs{P}}\leq 2\sum_{P\in\mathcal{P}}\frac{w(P)}{\abs{P}}1_P(x),
\end{equation*}
and hence
\begin{equation*}
  \int_Q M(1_Q w)\ud x\leq 2\sum_{P\in\mathcal{P}}w(P).
\end{equation*}
Consider the pairwise disjoint sets $E(P):=P\setminus\bigcup_{P'\in\mathcal{P}:P'\subsetneq P}P'$. Since
\begin{equation*}
  \sum_{\substack{P'\subsetneq P\\ P'\text{ maximal}}}\abs{P'}
  \leq\sum_{\substack{P'\subsetneq P\\ P'\text{ maximal}}} \frac{w(P')\abs{P}}{2w(P)}\leq \frac{w(P)\abs{P}}{2w(P)}=\frac{\abs{P}}{2},
\end{equation*}
it follows that $\abs{E(P)}\geq\frac12\abs{P}$. We derive a similar condition for the weighted measure from the $A_2$ condition. Indeed,
\begin{equation*}
\begin{split}
  \abs{E(P)}
  &=\int_{E(P)}w^{1/2}w^{-1/2}\ud x
  \leq\Big(\int_{E(P)}w\ud x\Big)^{1/2}\Big(\int_P w^{-1}\ud x\Big)^{1/2} \\
  &=w(E(P))^{1/2}\Big(\fint_P w^{-1}\ud x\Big)^{1/2}\abs{P}^{1/2} \\
  &\leq w(E(P))^{1/2}[w]_{A_2}^{1/2}\Big(\fint_P w\ud x\Big)^{-1/2}\abs{P}^{1/2}
    =\Big([w]_{A_2}\frac{w(E(P))}{w(P)}\Big)^{1/2}\abs{P}.
\end{split}
\end{equation*}
Using $\abs{P}\leq 2\abs{E(P)}$ and squaring, this shows that
\begin{equation*}
  w(P)\leq 4[w]_{A_2}w(E(P)).
\end{equation*}
After this, it is immediate to compute that
\begin{equation*}
  \sum_{P\in\mathcal{P}}w(P)
  \leq 4[w]_{A_2}\sum_{P\in\mathcal{P}}w(E(P))
  \leq 4[w]_{A_2}w(Q),
\end{equation*}
since the sets $E(P)$ are pairwise disjoint and contained in $Q$.
\end{proof}

\section{Further results and remarks}

This final section briefly collects, without proofs, some further related developments, and poses some open problems.

The $A_2$ theorem implies a corresponding $A_p$ theorem for all $p\in(1,\infty)$. This follows from a version of the celebrated extrapolation theorem, one of the most useful tools in the theory of $A_p$ weights. The extrapolation theorem was first found by J. L. Rubio de Francia \cite{Rubio:factorAp}, and shortly after (so soon that it was published earlier) another proof was given by J. Garc{\'{\i}}a-Cuerva \cite{Garcia:extrapolation}. For the present purposes, we need a quantitative form of the extrapolation theorem, which is due to Dragi\v{c}evi\'c, Grafakos, Pereyra, and Petermichl \cite{DGPP}, and reads as follows. Although relatively recent, it was known well before the proof of the full $A_2$ theorem.

\begin{theorem}\label{thm:extrap}
If an operator $T$ satisfies
\begin{equation*}
  \Norm{T}{L^2(w)\to L^2(w)}\leq C_T [w]_{A_2}^{\tau}
\end{equation*}
for all $w\in A_2$, then it satisfies
\begin{equation*}
  \Norm{T}{L^p(w)\to L^p(w)}\leq c_p C_T [w]_{A_p}^{\tau\max\{1,1/(p-1)\}}
\end{equation*}
for all $p\in(1,\infty)$ and $w\in A_p$.
\end{theorem}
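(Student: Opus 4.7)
The final statement is the sharp Rubio de Francia extrapolation theorem of Dragičević--Grafakos--Pereyra--Petermichl. The plan is to derive it from the hypothesized $A_2$ bound via the Rubio de Francia iteration algorithm, with careful tracking of all exponents.

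First, observe that the result is self-dual in the following sense: $T$ is bounded on $L^p(w)$ if and only if $T^{*}$ is bounded on $L^{p'}(w^{1-p'})$ with the same norm, and one has $[w^{1-p'}]_{A_{p'}} = [w]_{A_p}^{1/(p-1)}$. Under this involution the cases $p > 2$ and $p < 2$ are interchanged, and the two branches of $\max\{1,1/(p-1)\}$ are swapped. Hence it suffices to handle $p \in (1,2)$, where the target exponent is $\tau/(p-1)$; the case $p > 2$ will then follow by applying the $p < 2$ result to $T^{*}$ with weight $w^{1-p'} \in A_{p'}$, and the case $p = 2$ is the hypothesis.

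For $p \in (1,2)$ and $f \in L^p(w)$, the first move is to dualize:
\[
  \Norm{Tf}{L^p(w)} = \sup \int |Tf|\, h\, w,
\]
the supremum over nonnegative $h$ with $\Norm{h}{L^{p'}(w)}=1$. Since $p < 2 < p'$ and $w \in A_p \subseteq A_{p'}$ with $[w]_{A_{p'}} \leq [w]_{A_p}$, Buckley's sharp maximal inequality $\Norm{M}{L^{p'}(w)} \lesssim [w]_{A_{p'}}^{1/(p'-1)}$ gives $\Norm{M}{L^{p'}(w)} \lesssim [w]_{A_p}^{p-1}$. The Rubio de Francia iterate
\[
  H := \sum_{k\geq 0} \frac{M^k h}{(2\Norm{M}{L^{p'}(w)})^k}
\]
then satisfies $H \geq h$, $\Norm{H}{L^{p'}(w)} \leq 2$, and $H \in A_1$ with $[H]_{A_1} \lesssim [w]_{A_p}^{p-1}$.

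The third and central step is to construct from $H$ and $w$ an auxiliary weight $W \in A_2$ with a controlled power of $[w]_{A_p}$ as its $A_2$-characteristic, and then close by Cauchy--Schwarz:
\[
  \int |Tf|\, Hw \leq \Bigl(\int |Tf|^2 W\Bigr)^{1/2} \Bigl(\int (Hw)^2/W\Bigr)^{1/2}.
\]
The hypothesis then bounds the first factor by $C_T[W]_{A_2}^{\tau}(\int |f|^2 W)^{1/2}$, while $W$ is to be chosen so that the second factor and $\int |f|^2 W$ are both controlled, by Hölder's inequality, in terms of the normalisations $\Norm{H}{L^{p'}(w)} \leq 2$ and $\Norm{f}{L^p(w)}$. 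A workable choice is a power combination of the form $W = H^{2-p}w$, or equivalently $W = w_1 w_2^{-1}$ obtained from the Jones factorisation $w = w_1 w_2^{1-p}$ with $w_i \in A_1$; the verification that $W \in A_2$ uses Hölder's inequality for $\langle W\rangle_Q$ and the pointwise consequence $\inf_Q H \gtrsim \langle H\rangle_Q/[H]_{A_1}$ of $A_1$ for $\langle W^{-1}\rangle_Q$.

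The main obstacle is precisely this third step: $W$ must be designed so that every factor of $[w]_{A_p}$ and $[H]_{A_1}$ pulled out by Cauchy--Schwarz, the $A_2$ hypothesis, and the closing Hölder inequalities combine to yield exactly the exponent $\tau/(p-1)$, rather than some worse power. The delicate tradeoff between $[H]_{A_1} \lesssim [w]_{A_p}^{p-1}$ and the chosen power of $H$ entering $W$ is what produces the factor $1/(p-1)$ in the final exponent, and it is the source of the asymmetry between the two branches of $\max\{1,1/(p-1)\}$. Once this bookkeeping is aligned, taking the supremum over $h$ completes the proof for $p \in (1,2)$, and duality delivers the case $p > 2$.
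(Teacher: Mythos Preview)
The paper does not give its own proof of this statement: it is quoted as the sharp extrapolation theorem of Dragi\v{c}evi\'c--Grafakos--Pereyra--Petermichl \cite{DGPP} and used as a black box to pass from the $A_2$ theorem to Corollary~\ref{cor:Ap}. So there is nothing in the paper to compare your argument against.

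Your outline is the standard Rubio de Francia scheme, which is indeed how \cite{DGPP} (and Duoandikoetxea's later simplification) proceed: duality to reduce to one range of $p$, a Rubio de Francia iterate to manufacture an $A_1$ weight with controlled constant via Buckley's bound, and a Cauchy--Schwarz / $A_2$-hypothesis step against a suitably built auxiliary weight. The duality reduction and the bound $[H]_{A_1}\lesssim[w]_{A_p}^{p-1}$ are correct as written.

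That said, what you have is a plan rather than a proof: the decisive third step---choosing $W$ and checking that \emph{all} the powers of $[w]_{A_p}$ combine to exactly $\tau/(p-1)$---is explicitly deferred (``once this bookkeeping is aligned''). Two concrete cautions about that step. First, with your choice $W=H^{2-p}w$, the factor $\int (Hw)^2/W=\int H^p w$ is not controlled by $\Norm{H}{L^{p'}(w)}$ alone, so the Cauchy--Schwarz closing as written does not go through without an extra ingredient. Second, the usual way to make the exponents line up is to run Rubio de Francia with the \emph{weighted} maximal operator $M'g:=M(gw)/w$ on $L^{p'}(w)$, which yields $Hw\in A_1$ (rather than $H\in A_1$) with $[Hw]_{A_1}\lesssim[w]_{A_p}$; the auxiliary $A_2$ weight is then built from $Hw$ and $w^{1-p'}$, and the bookkeeping closes cleanly. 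Until you carry out one of these variants with all constants tracked, the argument is incomplete at exactly the point that determines the sharp exponent.
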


\begin{corollary}\label{cor:Ap}
Let $p\in(1,\infty)$ and $w\in A_p$.
Then any $L^2$ bounded Calder\'on--Zygmund operator $T$ whose kernel $K$ has H\"older type modulus of continuity $\psi(t)=t^{\alpha}$, $\alpha\in(0,1)$, satisfies
\begin{equation*}
  \Norm{T}{L^p(w)\to L^p(w)}
 \lesssim (\Norm{T}{L^2\to L^2}+\Norm{K}{CZ_\alpha})[w]_{A_p}^{\max\{1,1/(p-1)\}}.
\end{equation*}
\end{corollary}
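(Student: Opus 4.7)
The plan is to combine the quantitative $A_2$ theorem already established (Theorem~\ref{thm:1weightCZO}) with the sharp weighted extrapolation theorem of Dragi\v{c}evi\'c--Grafakos--Pereyra--Petermichl stated just above. There is essentially no work left to do beyond inserting one input into the other, so the whole proof should be a short verification that the hypotheses of the extrapolation theorem are met with the correct exponent $\tau=1$.

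Concretely, I would first set $C_T:=c(\Norm{T}{L^2\to L^2}+\Norm{K}{CZ_\alpha})$, where $c$ is the dimensional constant (also depending on $\alpha$) implicit in the ``$\lesssim$'' of Theorem~\ref{thm:1weightCZO}. That theorem then reads
\begin{equation*}
  \Norm{T}{L^2(w)\to L^2(w)}\leq C_T[w]_{A_2}\qquad\forall w\in A_2,
\end{equation*}
which is exactly the hypothesis of the extrapolation theorem with $\tau=1$, uniformly in $w$. Applying the extrapolation theorem gives, for every $p\in(1,\infty)$ and every $w\in A_p$,
\begin{equation*}
  \Norm{T}{L^p(w)\to L^p(w)}\leq c_p C_T[w]_{A_p}^{\max\{1,1/(p-1)\}},
\end{equation*}
and substituting the definition of $C_T$ yields the claim, with the constants absorbed into the ``$\lesssim$'' notation (which then legitimately depends on $d$, $\alpha$, and $p$).

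I do not foresee any real obstacle: the only thing worth flagging is that extrapolation is applied as a black box, so one must make sure the $A_2$ bound from Theorem~\ref{thm:1weightCZO} is of the form constant (independent of $w$) times $[w]_{A_2}^{\tau}$, which it manifestly is. Note also that one could instead apply extrapolation to the stronger mixed $A_2$--$A_\infty$ bound on the first line of Theorem~\ref{thm:1weightCZO}, provided one uses a version of extrapolation that tracks $A_\infty$ constants as well; but since the statement of Corollary~\ref{cor:Ap} only asks for the $[w]_{A_p}$ dependence, the classical sharp extrapolation recalled above is sufficient.
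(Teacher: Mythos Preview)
Your proposal is correct and matches the paper's approach exactly: the corollary is stated as an immediate consequence of the sharp extrapolation theorem applied to Theorem~\ref{thm:1weightCZO} with $\tau=1$, and your write-up simply makes this one-line deduction explicit.
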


It is also possible to apply a version of the extrapolation argument to the mixed $A_2$/$A_\infty$ bounds \cite{HytPer}, but this did not give the optimal results for $p\neq 2$. However, by setting up a different argument directly in $L^p(w)$, the following bounds were obtained in my collaboration with M.~Lacey \cite{HytLac}:

\begin{theorem}
Let $p\in(1,\infty)$ and $w\in A_p$.
Then any $L^2$ bounded Calder\'on--Zygmund operator $T$ whose kernel $K$ has H\"older type modulus of continuity $\psi(t)=t^{\alpha}$, $\alpha\in(0,1)$, satisfies
\begin{equation*}
  \Norm{T}{L^p(w)\to L^p(w)}
  \lesssim (\Norm{T}{L^2\to L^2}+\Norm{K}{CZ_\alpha})[w]_{A_p}^{1/p}\big([w]_{A_\infty}^{1/p'}+[w^{1-p'}]_{A_\infty}^{1/p}\big).
\end{equation*}
\end{theorem}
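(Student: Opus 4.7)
The plan is to follow the three-step program of the preceding sections directly in $L^p(w)$, avoiding the loss incurred by linear extrapolation from the $A_2$ case. First, Sawyer's dual weight trick with $\sigma:=w^{1-p'}$ reduces the bound to $\Norm{T(\sigma\cdot)}{L^p(\sigma)\to L^p(w)}\lesssim C_T[w,\sigma]_{A_p}^{1/p}([w]_{A_\infty}^{1/p'}+[\sigma]_{A_\infty}^{1/p})$, where $[w,\sigma]_{A_p}:=\sup_Q\ave{w}_Q\ave{\sigma}_Q^{p-1}$ coincides with $[w]_{A_p}$ for this choice of $\sigma$. By the Dyadic Representation Theorem~\ref{thm:formula} it then suffices to produce such a bound for each cancellative dyadic shift $S^{ij}_\omega$ and each paraproduct, with a constant growing only polynomially in $\kappa:=\max\{i,j\}$, so that summation against the decaying coefficients $\tau(i,j)$ still converges.

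The first main step is an $L^p$ analogue of Theorem~\ref{thm:2weight}. Introducing the testing constants
\begin{equation*}
 \mathfrak{S}_p:=\sup_Q\frac{\Norm{1_Q S(\sigma 1_Q)}{L^p(w)}}{\sigma(Q)^{1/p}},\qquad
 \mathfrak{S}^*_{p'}:=\sup_Q\frac{\Norm{1_Q S^*(w 1_Q)}{L^{p'}(\sigma)}}{w(Q)^{1/p'}},
\end{equation*}
I would prove $\Norm{S(\sigma\cdot)}{L^p(\sigma)\to L^p(w)}\lesssim(1+\kappa)^2\bigl(\mathfrak{S}_p+\mathfrak{S}^*_{p'}+[w,\sigma]_{A_p}^{1/p}\bigr)$ along the matrix-decomposition lines of Section~4, splitting $\pair{g}{S(\sigma f)}_w=\sum_{Q,R}\pair{\D^{w}_R g}{S(\sigma\D^{\sigma}_Q f)}_w$ into the disjoint, deeply-nested, and comparable-size blocks. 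The orthogonality arguments that relied on Pythagoras' theorem for $\D^{\sigma}_Q$ and $\D^{w}_R$ in Section~4 are replaced by Burkholder's square-function inequality for weighted martingale differences together with an $L^p$ dyadic Carleson embedding.

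For the second main step I would import the corona machinery of Section~5 essentially verbatim, now measuring level sets in $L^p(w)$ rather than $L^2(w)$. The splittings (scale separation, $A_p$-freezing, principal cubes $\mathscr{P}^a$, and $\sigma$-to-Lebesgue freezing), the weak $L^1$ bound, and the exponential good-$\lambda$ estimate for the subshifts $S_{\mathscr{K}^a_b(P)}$ all carry through unchanged. The only modification is in the recombination of principal cubes: exploiting the fact that the $P\in\mathscr{P}^a$ containing a given point form a nested chain whose $\sigma$-averages decay geometrically, one gets the pointwise bound $\sum_{P\in\mathscr{P}^a}\ave{\sigma}_P 1_{E_P}(x)\lesssim\sup_{P\owns x}\ave{\sigma}_P$ and hence
\begin{equation*}
 \BNorm{\sum_{P\in\mathscr{P}^a}\ave{\sigma}_P 1_{E_P}}{L^p(w)}^p\lesssim\sum_{P\in\mathscr{P}^a}\ave{\sigma}_P^p\,w(E_P),
\end{equation*}
with $E_P:=\{\abs{S_{\mathscr{K}^a_b(P)}(\sigma 1_Q)}>n2^{-b}\ave{\sigma}_P\}$ the $n$-th level set. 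Inserting the good-$\lambda$ bound $w(E_P)\lesssim 2^{-cn}w(P)$, using $w(P)\leq[w,\sigma]_{A_p}\ave{\sigma}_P^{-(p-1)}\abs{P}$ to convert $\ave{\sigma}_P^p w(P)$ into $[w,\sigma]_{A_p}\cdot\sigma(P)$, and invoking the Fujii--Wilson bound $\sum_{P\in\mathscr{P}^a}\sigma(P)\leq 2[\sigma]_{A_\infty}\sigma(Q)$ exactly as in Section~5, one reaches $\mathfrak{S}_p\lesssim(1+\kappa)[w,\sigma]_{A_p}^{1/p}[\sigma]_{A_\infty}^{1/p}$. The symmetric argument with $(w,\sigma,p)$ replaced by $(\sigma,w,p')$ yields $\mathfrak{S}^*_{p'}\lesssim(1+\kappa)[\sigma,w]_{A_{p'}}^{1/p'}[w]_{A_\infty}^{1/p'}=(1+\kappa)[w,\sigma]_{A_p}^{1/p}[w]_{A_\infty}^{1/p'}$, by the duality identity $[\sigma,w]_{A_{p'}}=[w,\sigma]_{A_p}^{p'-1}$.

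Combining the testing theorem with these two testing bounds gives the desired two-weight inequality for each $S^{ij}_\omega$, and summation along \eqref{eq:decay}, where the polynomial $\kappa$-growth is absorbed by the decay of $\tau(i,j)$, completes the proof; the paraproducts fit into the same framework via their standard weighted bound. The hard part is the $L^p$ two-weight testing theorem itself: without Pythagoras, the disjoint-cube block of the matrix no longer factors cleanly through the $L^2$ norms of $\D^{\sigma}_Q f$ and $\D^{w}_R g$, and a genuine $L^p$ dyadic Carleson embedding must be interposed. Careful bookkeeping of the polynomial $(1+\kappa)$-factor through both the $p$- and $p'$-sides of the duality is then what guarantees that neither exponent of $[w,\sigma]_{A_p}$, $[w]_{A_\infty}$, nor $[\sigma]_{A_\infty}$ suffers any spurious blow-up.
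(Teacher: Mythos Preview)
The paper does not actually prove this theorem; it is quoted from \cite{HytLac} and the surrounding text sketches the method used there. That method differs from yours in one essential respect: after the Dyadic Representation Theorem, \cite{HytLac} does \emph{not} attempt an $L^p$ analogue of Theorem~\ref{thm:2weight} for general shifts. Instead it inserts an additional reduction via Lerner's local oscillation formula, which dominates each shift $S$ (and even $S_\natural$) pointwise by a \emph{positive} dyadic operator. The two-weight $L^p$ testing theory is then applied only to these positive operators, where the needed theory of Lacey--Sawyer--Uriarte-Tuero \cite{LSU:positive} is available for all $p$. The paper states this explicitly: ``the orthogonality arguments, which were decisive for the present reduction to testing conditions, are replaced by positive-kernel estimates.''

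Your second step (verification of the $L^p$ testing constants via the corona machinery) is in good shape; the conversion $\ave{\sigma}_P^p\,w(P)\leq[w,\sigma]_{A_p}\sigma(P)$ and the Fujii--Wilson bound go through as you say. The genuine gap is your first step: a two-weight $L^p$ testing theorem of the form
\[
  \Norm{S(\sigma\cdot)}{L^p(\sigma)\to L^p(w)}\lesssim(1+\kappa)^2\bigl(\mathfrak{S}_p+\mathfrak{S}^*_{p'}+[w,\sigma]_{A_p}^{1/p}\bigr)
\]
for a general (signed, cancellative) dyadic shift is not known to follow from the matrix decomposition of Section~4 once $p\neq 2$. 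The proof there uses Pythagoras twice in a way that is not merely cosmetic: in the deeply-contained block one passes from $\sum_R\ave{g}^w_R\langle S^*(w1_R),\D^{\sigma,i}_R f\rangle_\sigma$ to a Carleson sum by Cauchy--Schwarz against $(\sum_R\Norm{\D^{\sigma,i}_R f}{L^2(\sigma)}^2)^{1/2}=\Norm{f}{L^2(\sigma)}$, and in the disjoint block one collapses the double sum via $\ell^2$ orthogonality of both families $\{\D^{\sigma,i}_P f\}$ and $\{\D^{w,j}_P g\}$. Burkholder's inequality gives $\Norm{f}{L^p(\sigma)}\eqsim\Norm{(\sum_Q|\D^\sigma_Q f|^2)^{1/2}}{L^p(\sigma)}$, but this does \emph{not} let you peel off a single factor $\Norm{\D^{\sigma,i}_R f}{}$ inside a bilinear sum and then re-assemble; the square function sits inside the $L^p$ norm, and there is no H\"older step that separates the $Q$- and $R$-sums without losing the sharp exponent. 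This obstruction is precisely why \cite{HytLac} routes through Lerner's formula to positive operators, for which monotonicity substitutes for orthogonality and the $L^p$ testing theorem \emph{is} available. Your sketch ``Burkholder plus $L^p$ Carleson embedding'' does not supply the missing mechanism, and as written the proposal does not close.
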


For weak-type bounds, which were investigated by Lacey, Martikainen, Orponen, Reguera, Sawyer, Uriarte-Tuero, and myself \cite{HLMORSU}, we need only `half' of the strong-type upper bound:

\begin{theorem}
Let $p\in(1,\infty)$ and $w\in A_p$.
Then any $L^2$ bounded Calder\'on--Zygmund operator $T$ whose kernel $K$ has H\"older type modulus of continuity $\psi(t)=t^{\alpha}$, $\alpha\in(0,1)$, satisfies
\begin{equation*}
\begin{split}
  \Norm{T}{L^p(w)\to L^{p,\infty}(w)}
  &\lesssim (\Norm{T}{L^2\to L^2}+\Norm{K}{CZ_\alpha})[w]_{A_p}^{1/p}[w]_{A_\infty}^{1/p'} \\
  &\lesssim (\Norm{T}{L^2\to L^2}+\Norm{K}{CZ_\alpha})[w]_{A_p}.
\end{split}
\end{equation*}
\end{theorem}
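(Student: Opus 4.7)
The plan is to follow the template already developed in the excerpt: reduce $T$ to dyadic shifts via Theorem~\ref{thm:formula2}, pass to a two-weight formulation via the Sawyer dual weight trick, and prove a two-weight weak-type inequality for each dyadic shift $S$ of type $(i,j)$ with polynomial dependence on $\kappa:=\max\{i,j\}$. Summing the shift-level estimates against the decay weights $\tau(i,j)$ from \eqref{eq:decay}, together with the sublinearity of the $L^{p,\infty}(w)$ quasi-norm, recovers the bound for $T$. The paraproducts $\Pi^\omega_{T1}$ and $(\Pi^\omega_{T^*1})^*$ are non-cancellative shifts of type $(0,0)$ and are treated by the same machinery.

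After the dual weight trick with $\sigma:=w^{1-p'}$, the shift-level target is
\begin{equation*}
  \Norm{S(\sigma\cdot)}{L^p(\sigma)\to L^{p,\infty}(w)}
  \lesssim (1+\kappa)^{c}\,[w]_{A_p}^{1/p}\,[w]_{A_\infty}^{1/p'}
\end{equation*}
for some absolute $c$. Fix $\lambda>0$ and apply a Calder\'on--Zygmund decomposition to $\phi\in L^p(\sigma)$ adapted to $\sigma$: let $\mathscr{B}$ be the collection of maximal dyadic cubes $L$ with $\ave{|\phi|}^{\sigma}_L>\lambda$, set $b_L:=1_L(\phi-\ave{\phi}^{\sigma}_L)$, $b:=\sum_{L\in\mathscr{B}}b_L$, $g:=\phi-b$. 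The standard properties $\Norm{g}{L^\infty}\lesssim\lambda$, $\int b_L\,\sigma=0$, and $\sum_{L}\sigma(L)\lesssim\lambda^{-p}\Norm{\phi}{L^p(\sigma)}^p$ all hold. The good part is handled by Chebyshev against a strong-type $L^q(\sigma)\to L^q(w)$ bound at some $q\in(1,p)$, obtained by interpolating Theorem~\ref{thm:2weightShift} (or, in the one-weight phrasing, by Corollary~\ref{cor:Ap}). Combined with $\Norm{g}{L^q(\sigma)}^q\lesssim\lambda^{q-p}\Norm{\phi}{L^p(\sigma)}^p$, this gives the required bound for $w(\{|S(\sigma g)|>\lambda/2\})$.

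For the bad part, first discard the exceptional set $E:=\bigcup_{L\in\mathscr{B}}L$: the joint $A_p$ condition together with $\sum_L\sigma(L)\lesssim\lambda^{-p}\Norm{\phi}{L^p(\sigma)}^p$ yields $w(E)\lesssim\lambda^{-p}[w]_{A_p}\Norm{\phi}{L^p(\sigma)}^p$, as desired. On $E^c$, the crucial observation---identical in spirit to the weak-$(1,1)$ proof in the excerpt---is that $A_K b_L=0$ whenever $\ell(K)>2^{\kappa}\ell(L)$, since every $h_J$ entering $A_K$ has $\ell(J)=2^{-j}\ell(K)>\ell(L)$ and is therefore constant on $L$, while $\int b_L\,\sigma=0$ kills the pairing. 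Consequently, only the $O(\kappa)$ scales $L\subsetneq K\subseteq L^{(\kappa)}$ contribute, producing the polynomial factor $(1+\kappa)^c$. Summing over $L$ and invoking the testing constant $\mathfrak{S}$ from Theorem~\ref{thm:testing} produces exactly the factor $[\sigma]_{A_\infty}^{1/2}$ at the two-weight level, which translates under the dual weight trick into $[w]_{A_\infty}^{1/p'}$.

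The conceptual reason only $[w]_{A_\infty}^{1/p'}$ appears, and not also $[w^{1-p'}]_{A_\infty}^{1/p}$---the ``half'' referenced in the excerpt---is that the weak-type quantity is asymmetric in $w$ and $\sigma$: we estimate the $w$-measure of a single level set of $S(\sigma\phi)$, so only the one-sided testing constant $\mathfrak{S}$ enters, never $\mathfrak{S}^*$. The second line of the theorem is then immediate from $[w]_{A_\infty}\lesssim[w]_{A_p}$. The main obstacle I foresee is the careful bookkeeping of $A_p$ versus $A_\infty$ constants: one must choose the auxiliary exponent $q\in(1,p)$ and apportion the total constant $[w]_{A_p}^{1/p}[w]_{A_\infty}^{1/p'}$ between the Chebyshev bound on $g$ and the testing-based bound on $b$ so that neither piece exceeds the target power of either characteristic.
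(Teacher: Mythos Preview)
The paper does not actually prove this theorem; it is stated in the ``Further results and remarks'' section with attribution to \cite{HLMORSU}, so there is no in-paper argument to compare against. Your proposal must therefore be judged on its own merits, and it has two genuine gaps.

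First, you have the testing constants reversed. For the weak-type inequality $S(\sigma\cdot):L^p(\sigma)\to L^{p,\infty}(w)$, linearising the level set $E=\{|S(\sigma\phi)|>\lambda\}$ gives $\langle 1_E,S(\sigma\phi)\rangle_w=\langle S^*(w1_E),\phi\rangle_\sigma$, so the one-sided condition that actually controls the weak type is the \emph{dual} testing constant $\mathfrak{S}^*$, not $\mathfrak{S}$. This is consistent with Theorem~\ref{thm:testing}: it is $\mathfrak{S}^*\lesssim(1+\kappa)[w,\sigma]_{A_2}^{1/2}[w]_{A_\infty}^{1/2}$ that carries $[w]_{A_\infty}$, whereas $\mathfrak{S}$ carries $[\sigma]_{A_\infty}=[w^{1-p'}]_{A_\infty}$. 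Your assertion that ``$[\sigma]_{A_\infty}^{1/2}$ \ldots\ translates under the dual weight trick into $[w]_{A_\infty}^{1/p'}$'' is simply false: the dual-weight substitution $f\mapsto\phi\sigma$ leaves the operator-norm constant unchanged, and there is no mechanism by which an $A_\infty$ constant of $\sigma$ becomes one of $w$. (Incidentally, in your cancellation step it is $h_I$, not $h_J$, that is paired with $\sigma b_L$, so the vanishing requires $\ell(K)>2^{i}\ell(L)$; the conclusion $\ell(K)>2^\kappa\ell(L)$ is still correct, but the reasoning should go through $h_I$.)

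Second, the Chebyshev treatment of the good part cannot deliver the sharp constant. If you use the strong-type bound at $q=p$, Chebyshev returns the weak-type constant $\Norm{S(\sigma\cdot)}{L^p(\sigma)\to L^p(w)}\lesssim[w]_{A_p}^{\max\{1,1/(p-1)\}}$, which already exceeds $[w]_{A_p}$ for $p<2$ and never reaches $[w]_{A_p}^{1/p}[w]_{A_\infty}^{1/p'}$. Using $q<p$ is worse: $w\in A_p$ does not place $w$ in $A_q$ with controlled constant, and $\phi\in L^p(\sigma)$ need not lie in $L^q(\sigma)$ at all when $\sigma(\R^d)=\infty$, so $\Norm{g}{L^q(\sigma)}$ may be infinite. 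The ``careful bookkeeping'' you flag as an obstacle is not a bookkeeping issue but a structural one: the Calder\'on--Zygmund good/bad split with Chebyshev on the good part is too crude to isolate the mixed $A_p$--$A_\infty$ exponent. The arguments in \cite{HLMORSU,HytLac} avoid this by working directly with the distributional/testing structure (or via Lerner's formula and positive operators, as the paper sketches), rather than by bootstrapping from a strong-type bound.
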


All these results remain valid for the non-linear operators given by the \emph{maximal truncations}
\begin{equation*}
  T_{\natural}f(x):=\sup_{\varepsilon>0}\abs{T_{\varepsilon}f(x)},\qquad
  T_{\varepsilon}f(x):=\int_{\abs{x-y}>\varepsilon}K(x,y)f(y)\ud y,
\end{equation*}
which have been addressed in \cite{HytLac,HLMORSU}. In \cite{HLMORSU} it was also shown that the sharp weighted bounds for dyadic shifts can be made linear (instead of quadratic) in $\kappa$, a result recovered by a different (Bellman function) method by Treil \cite{Treil:linear}. Earlier polynomial-in-$\kappa$ Bellman function estimates for the shifts were due to Nazarov and Volberg \cite{NV}. An extension of the $A_2$ theorem to abstract metric spaces with a doubling measure (spaces of homogeneous type) is due to Nazarov, Reznikov, and Volberg \cite{NRV}.

A higher degree of non-linearity is obtained by replacing the supremum over $\epsilon>0$ defining the maximal truncation by one of the \emph{variation norms}
\begin{equation*}
  \Norm{\{v_\epsilon\}_{\epsilon>0}}{V^q}
  :=\sup_{\{\epsilon_i\}_{i\in\Z}}\Big(\sum_i\abs{v_{\epsilon_i}-v_{\epsilon_{i+1}}}^q\Big)^{1/q},
\end{equation*}
where the supremum is over all monotone sequences $\{\epsilon_i\}_{i\in\Z}\subset(0,\infty)$. Sharp weighted bounds for the $q$-variation ($q\in(2,\infty)$) of Calder\'on--Zygmund operators were first proved by Hyt\"onen--Lacey--P\'erez \cite{HLP}, although replacing the sharp truncation $T_\epsilon f(x)$ by a smooth truncation
\begin{equation*}
  T^\phi_\epsilon f(x):=\int \phi\Big(\frac{\abs{x-y}}{\epsilon}\Big)K(x,y)f(y)\ud y,
\end{equation*}
where $\phi$ is smooth and $0\leq\phi\leq 1_{(1,\infty)}$. Sharp weighted bounds for the $q$-variation of the sharp truncations with $\phi=1_{(1,\infty)}$ were recently obtained by de Fran\c{c}a Silva and Zorin-Kranich \cite{FSZK}.

The approach to the $q$-variation in \cite{HLP} was through a non-probabilistic counterpart of the Dyadic Representation, a Dyadic Domination, which was independently discovered by Lerner \cite{Lerner:domination,Lerner:simple}. Another advantage of this method was its ability to handle Calder\'on--Zygmund kernels with weaker moduli of continuity $\psi$ than those treated by the present approach; namely any moduli $\psi$ subject to the log-bumped Dini condition $\int_0^1\psi(t)(1+\log\frac1t)\frac{\ud t}{t}<\infty$.

In its original form, the Dyadic Domination theorem gave a domination in norm, which improved to pointwise domination by Conde-Alonso and Rey \cite{CondeRey} and, independently, by Lerner and Nazarov \cite{LerNaz:book}. All these approaches required the same log-Dini condition,  and the necessity of the logarithmic correction to the Dini-condition remained open for some time, until it was finally eliminated by Lacey~\cite{Lacey:elem} by yet another approach. The following quantitative form of Lacey's theorem was obtained by L. Roncal, O. Tapiola and the author \cite{HytRoncal}, and with a simpler proof by Lerner \cite{Lerner:simplest}:

\begin{theorem}\label{thm:logDini}
Let $w\in A_2$. Then any $L^2$ bounded Calder\'on--Zygmund operator $T$ whose kernel $K$ has modulus of continuity $\psi$ , satisfies
\begin{equation*}
  \Norm{T}{L^2(w)\to L^2(w)}
 \lesssim \Big(\Norm{T}{L^2\to L^2}+\Norm{K}{CZ_0}+\Norm{K}{CZ_\psi}\int_0^1\psi(t)\frac{\ud t}{t}\Big)[w]_{A_2}.
\end{equation*}
\end{theorem}

Asking for even less regularity, one may wonder about the sharp weighted bound for the class of rough homogeneous singular integral operators
\begin{equation*}
  Tf(x)=\text{p.v.}\int_{\R^d}\frac{\Omega(y)}{\abs{y}^d}f(x-y)\ud y,
\end{equation*}
where
\begin{equation*}
  \Omega(y)=\Omega(\frac{y}{\abs{y}}),\qquad\Omega\in L^\infty(\mathbb{S}^{d-1}),\qquad\int_{\mathbb{S}^{d-1}}\Omega(\sigma)\ud\sigma=0.
\end{equation*}
Their qualitative boundedness $T:L^2(w)\to L^2(w)$ is known for $w\in A_2$ (see Watson~\cite{Watson}). Roncal, Tapiola and the author \cite{HytRoncal} showed that $\Norm{T}{L^2(w)\to L^2(w)}\lesssim \Norm{\Omega}{\infty}[w]_{A_2}^2$, but it is not known whether this quadratic dependence on $[w]_{A_2}$ is sharp.

\subsection{The Beurling operator and its powers}\label{sec:Beurling}

One of the key original motivations to study the $A_2$ theorem was a conjecture of Astala--Iwaniec--Saksman \cite{AIS} concerning the special case where $T$ is the Beurling operator
\begin{equation*}
  Bf(z):=-\frac{1}{\pi}\operatorname{p.v.}\int_{\C}\frac{1}{\zeta^2}f(z-\zeta)\ud A(\zeta),
\end{equation*}
and $A$ is the area measure (two-dimensional Lebesgue measure) on $\C\simeq\R^2$. This was the first Calder\'on--Zygmund operator for which the $A_2$ theorem was proven; it was achieved by Petermichl and Volberg \cite{PV}, confirming the mentioned conjecture of Astala, Iwaniec, and Saksman \cite{AIS}. Another proof of the $A_2$ theorem for this specific operator is due to Dragi\v{c}evi\'c and Volberg \cite{DV}.

The powers $B^n$ of $B$ have also been studied, and then it is of interest to understand the growth of the norms as a function of $n$. Shortly before the proof of the full $A_2$ theorem, by methods specific to the Beurling operator, O.~Dragi\v{c}evi\'c \cite{Dragicevic:cubic} was able to prove the cubic growth
\begin{equation*}
    \Norm{B^n}{L^2(w)\to L^2(w)}\lesssim\abs{n}^{3}[w]_{A_2},\qquad n\in\Z\setminus\{0\}.
\end{equation*}

Now, let us see what the general $A_2$ theorem gives for these specific powers.
It is known (see e.g. \cite{DPV}) that $B^n$ is the convolution operator with the kernel
\begin{equation*}
  K_n(z)=(-1)^n\frac{\abs{n}}{\pi}\Big(\frac{\bar{z}}{z}\Big)^n\abs{z}^{-2},
\end{equation*}
and it is elementary to check that this satisfies $\Norm{K_n}{CZ_\alpha}\lesssim\abs{n}^{1+\alpha}$ for any $\alpha\in(0,1)$. Moreover, since $B$ is an isometry on $L^2(\C)$, we have $\Norm{B^n}{L^2\to L^2}=1$. From Theorem~\ref{thm:1weightCZO} we deduce:

\begin{corollary}
The powers $B^n$ of the Beurling operator satisfy
\begin{equation*}
  \Norm{B^n}{L^2(w)\to L^2(w)}\lesssim\abs{n}^{1+\alpha}[w]_{A_2},\qquad\alpha>0,
\end{equation*}
where the implied constant depends on $\alpha$.
\end{corollary}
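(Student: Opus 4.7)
The plan is to apply Corollary~\ref{cor:Ap} directly to $T=B^n$, so that the heavy weighted-theoretic machinery is outsourced and the remaining task is purely elementary: bound the two quantities appearing on its right-hand side, namely $\Norm{B^n}{L^2\to L^2}$ and $\Norm{K_n}{CZ_\alpha}$. The former is essentially free: $B$ is an isometry (in fact unitary, being a Fourier multiplier with unimodular symbol) on $L^2(\C)$, hence so is every integer power, giving $\Norm{B^n}{L^2\to L^2}=1$. All the work therefore lies in showing that the kernel of $B^n$ satisfies $\Norm{K_n}{CZ_\alpha}\lesssim\abs{n}^{1+\alpha}$.

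For the size bound, $\abs{K_n(z)}=\abs{n}/(\pi\abs{z}^2)$ is immediate from the explicit formula, yielding $\Norm{K_n}{CZ_0}\lesssim\abs{n}$. For the H\"older estimate, I would write $K_n(z)=c_n e^{-2\img n\theta}\abs{z}^{-2}$ in polar coordinates $(r,\theta)$ and differentiate: the radial derivative is $O(\abs{n}\abs{z}^{-3})$, while the angular derivative contributes an extra factor of $n$ from the rapidly oscillating phase $e^{-2\img n\theta}$, producing the gradient bound $\abs{\nabla K_n(z)}\lesssim\abs{n}^2\abs{z}^{-3}$. A direct mean-value application would then give only $\Norm{K_n}{CZ_1}\lesssim\abs{n}^2$, already improving on the prior cubic bound of Dragi\v{c}evi\'c, but still weaker than what is claimed.

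The key step is therefore to interpolate between the size and gradient bounds. For $\abs{z-z'}\leq\tfrac12\abs{z}$, split
\[
\abs{K_n(z)-K_n(z')}=\abs{K_n(z)-K_n(z')}^{1-\alpha}\cdot\abs{K_n(z)-K_n(z')}^{\alpha},
\]
estimating the first factor by $2\Norm{K_n}{CZ_0}\abs{z}^{-2}\lesssim\abs{n}\abs{z}^{-2}$ and the second by the mean value theorem combined with the gradient bound. This yields
\[
\abs{K_n(z)-K_n(z')}\lesssim\abs{n}^{1-\alpha}\abs{n}^{2\alpha}\cdot\frac{1}{\abs{z}^{2}}\Big(\frac{\abs{z-z'}}{\abs{z}}\Big)^{\alpha}=\abs{n}^{1+\alpha}\cdot\frac{1}{\abs{z}^{2}}\Big(\frac{\abs{z-z'}}{\abs{z}}\Big)^{\alpha},
\]
and the analogous estimate in the second variable follows automatically because $K_n$ is a convolution kernel. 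Combining everything, $\Norm{B^n}{L^2\to L^2}+\Norm{K_n}{CZ_\alpha}\lesssim\abs{n}^{1+\alpha}$, and Corollary~\ref{cor:Ap} closes the argument.

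The main obstacle---or rather the only genuinely clever point---is the interpolation step: without it one obtains only a quadratic bound in $\abs{n}$, whereas trading a fractional power of the gradient bound against the size bound converts the two factors of $\abs{n}$ in $\abs{\nabla K_n}$ into a weighted geometric mean of $\abs{n}$ and $\abs{n}^2$, producing $\abs{n}^{1+\alpha}$. One cannot simply let $\alpha\to 0$ to squeeze out a linear bound, since the implied constants in the standard H\"older estimates and in Corollary~\ref{cor:Ap} depend on $\alpha$ and deteriorate as $\alpha\to 0$.
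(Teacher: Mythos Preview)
Your proposal is correct and follows exactly the paper's approach: apply Corollary~\ref{cor:Ap} to $T=B^n$, use that $B$ is an $L^2$-isometry so $\Norm{B^n}{L^2\to L^2}=1$, and verify $\Norm{K_n}{CZ_\alpha}\lesssim\abs{n}^{1+\alpha}$. The paper dismisses the kernel estimate as ``elementary to check''; you actually supply the details via the size--gradient interpolation, which is the natural way to fill in that step.
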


A sharper estimate still is provided by Theorem~\ref{thm:logDini}, as observed in \cite{HytRoncal}:

\begin{corollary}
The powers $B^n$ of the Beurling operator satisfy
\begin{equation*}
  \Norm{B^n}{L^2(w)\to L^2(w)}\lesssim\abs{n}(1+\log\abs{n}) [w]_{A_2}.
\end{equation*}
\end{corollary}

For this it suffices to check that, defining the modulus of continuity
\begin{equation*}
 \psi_n(t):=\min\{\abs{n}t,1\},
\end{equation*}
we have $\Norm{K_n}{CZ_{\psi_n}}\lesssim\abs{n}$ and hence
\begin{equation*}
  \Norm{K_n}{CZ_{\psi_n}}\int_0^1\psi_n(t)\frac{\ud t}{t}\lesssim \abs{n}(1+\log\abs{n}).
\end{equation*}
However, a better bound would follow if we had the $A_2$ theorem for the rough singular integrals in the form
\begin{equation*}
  \Norm{T}{L^2(w)\to L^2(w)}\lesssim \Norm{\Omega}{\infty}[w]_{A_2},
\end{equation*}
for this would lead to the linear estimate $\Norm{B^n}{L^2(w)\to L^2(w)}\lesssim\abs{n}[w]_{A_2}$, simply by viewing the kernels $K_n$ (although smooth), as rough kernels of homogeneous singular integrals.

Let us notice that no bound better than this is possible, at least on the scale of power-type dependence on $\abs{n}$:

\begin{proposition}
No bound of the form $\Norm{B^n}{L^2(w)\to L^2(w)}\lesssim\abs{n}^{1-\epsilon}[w]_{A_2}^{\tau}$ can be valid for any $\epsilon,\tau>0$.
\end{proposition}

\begin{proof}
Suppose for contradiction that such a bound holds for some fixed $\epsilon,\tau>0$ and all $n\in\Z\setminus\{0\}$. By Theorem~\ref{thm:extrap}, we deduce that
\begin{equation*}
  \Norm{B^n}{L^p(w)\to L^p(w)}\lesssim_p\abs{n}^{1-\epsilon}[w]_{A_p}^{\tau\max\{1,1/(p-1)\}},
\end{equation*}
and hence in particular we have the unweighted bound
\begin{equation*}
  \Norm{B^n}{L^p\to L^p}\lesssim_p\abs{n}^{1-\epsilon},\qquad 1<p<\infty.
\end{equation*}
However, it has been shown by Dragi\v{c}evi\'c, Petermichl and Volberg that the correct dependence here is
\begin{equation*}
  \Norm{B^n}{L^p\to L^p}\eqsim_p\abs{n}^{\abs{1-2/p}},\qquad 1<p<\infty.
\end{equation*}
The previous two displays are clearly in contradiction for $p$ close to either $1$ or~$\infty$, and we are done.
\end{proof}

The quest for the $A_2$ theorem began from the investigations of the Beurling transform, but clearly even this case is not yet fully understood.

\section*{Acknowledgements}
The author was supported by the European Union via the ERC Starting Grant ``Analytic--probabilistic methods for borderline singular integrals'', and by the Academy of Finland via grants 130166 and 133264 and the Centre of Excellence in Analysis and Dynamics Research. The core of this exposition was first presented as a series of lectures during the Doc-Course ``Harmonic analysis, metric spaces and applications to P.D.E.''  at Universidad de Sevilla, Spain, in June-July 2011. I would like to thank the organizers of the summer school, and especially Carlos P\'erez, for this opportunity. I would also like to thank Oliver Dragi\v{c}evi\'c for discussions that led me to the observations about the powers of the Beurling operator in Section~\ref{sec:Beurling}. The anonymous referee is warmly thanked for his/her friendly suggestions that eliminated several serious omissions in my original list of citations.

%\bibliography{weighted}
%\bibliographystyle{abbrv}

\end{document}